\documentclass[a4paper,10pt]{amsart}


\linespread{1.0}

\oddsidemargin=1cm

\evensidemargin=1cm

\topmargin=1cm

\textwidth=14cm

\textheight=23cm





\usepackage{amssymb,amscd,amsthm,mathrsfs}
\usepackage[ansinew]{inputenc}
\usepackage[centertags]{amsmath}
\usepackage{graphicx,psfrag}

\newtheorem*{teo*}{Theorem}

\newtheorem*{moore}{Moore's Theorem}

\newtheorem{teo}{Theorem}

\newtheorem{cor}[teo]{Corollary}
\newtheorem{lema}{Lemma}[section]
\newtheorem{prop}[lema]{Proposition}
\newtheorem{teor}[lema]{Theorem}
\newtheorem{problem}{Problem}
\newtheorem{add}[lema]{Addendum}

\newcommand{\bi}{\begin{itemize}}
\newcommand{\ei}{\end{itemize}}

\theoremstyle{definition}

\theoremstyle{remark}

\newtheorem{obs}[teo]{Remark}

\numberwithin{equation}{section}





\newcommand{\ld}{\ensuremath{,\ldots,}}
\newcommand{\ssq}{\ensuremath{\subseteq}}
\newcommand{\smin}{\ensuremath{\setminus}}
\newcommand{\eps}{\ensuremath{\varepsilon}}


\newcommand{\twlog}{without loss of generality}


\newcommand{\T}{\ensuremath{\mathbb{T}}}

\newcommand{\N}{\ensuremath{\mathbb{N}}} 
\newcommand{\R}{\ensuremath{\mathbb{R}}}
\newcommand{\Z}{\ensuremath{\mathbb{Z}}}
\newcommand{\Q}{\ensuremath{\mathbb{Q}}}
\newcommand{\C}{\ensuremath{\mathbb{C}}}

\newcommand{\D}{\ensuremath{\mathbb{D}}}

\newcommand{\A}{\ensuremath{\mathbb{A}}}


\newcommand{\inte}{\ensuremath{\mathrm{int}}}


\newcommand{\kreis}{\ensuremath{\mathbb{T}^{1}}}

\newcommand{\torus}{\ensuremath{\mathbb{T}^2}}

\newcommand{\homeo}{\ensuremath{\mathrm{Homeo}}}
\newcommand{\homtwo}{\ensuremath{\mathrm{Homeo}_0(\mathbb{T}^2)}}




\newcommand{\alphlist}{\begin{list}{(\alph{enumi})}{\usecounter{enumi}\setlength{\parsep}{2pt}
      \setlength{\itemsep}{1pt} \setlength{\topsep}{5pt}
      \setlength{\partopsep}{3pt}}}

\newcommand{\arablist}{\begin{list}{\arabic{enumi}.}{\usecounter{enumi}\setlength{\parsep}{2pt}
          \setlength{\itemsep}{1pt} \setlength{\topsep}{5pt}
          \setlength{\partopsep}{3pt}}}

\newcommand{\romanlist}{\begin{list}{(\roman{enumi})}{\usecounter{enumi}\setlength{\parsep}{2pt}
              \setlength{\itemsep}{1pt} \setlength{\topsep}{5pt}
              \setlength{\partopsep}{3pt}}}

 \newcommand{\listend}{\end{list}}

\newcommand{\bulletlist}{\begin{list}{$\bullet$}{\setlength{\parsep}{2pt}
                \setlength{\itemsep}{1pt} \setlength{\topsep}{5pt}
                \setlength{\partopsep}{3pt}\setlength{\leftmargin}{15pt}}}


\newcommand{\foot}{\footnote}



\newcommand{\ncup}{\ensuremath{\bigcup_{n\in\N}}}


\newcommand{\nLim}{\ensuremath{\lim_{n\rightarrow\infty}}}
\newcommand{\iLim}{\ensuremath{\lim_{i\rightarrow\infty}}}

\newcommand{\kLim}{\ensuremath{\lim_{k\rightarrow\infty}}}




\newcommand{\filll}{\ensuremath{\mathrm{Fill}}}
\newcommand{\conn}{\ensuremath{\mathrm{Conn}}}
\newcommand{\cl}{\ensuremath{\mathrm{Cl}}}

\newcommand{\M}{\ensuremath{\mathcal{M}}}
\newcommand{\Acal}{\ensuremath{\mathcal{A}}}
\newcommand{\Ccal}{\ensuremath{\mathcal{C}}}
\newcommand{\U}{\ensuremath{\mathcal{U}}}

 \title{A classification of minimal
  sets of torus homeomorphisms}

\author{T.~J\"ager \and F.~Kwakkel \and A.~Passeggi}

\begin{document}

\begin{abstract} We provide a classification of minimal sets of
  homeomorphisms of the two-torus, in terms of the structure of their
  complement. We show that this structure is exactly one of the
  following types: (1) a disjoint union of topological disks, or (2) a
  disjoint union of essential annuli and topological disks, or (3) a
  disjoint union of one doubly essential component and bounded
  topological disks. Moreover, in case (1) bounded disks are
  non-periodic and in case (2) all disks are non-periodic.

  This result provides a framework for more detailed
  investigations, and additional information on the torus
  homeomorphism allows to draw further conclusions. In the
  non-wandering case, the classification can be significantly
  strengthened and we obtain that a minimal set other than the whole
  torus is either a periodic orbit, or the orbit of a periodic
  circloid, or the extension of a Cantor set. Further special cases
  are given by torus homeomorphisms homotopic to an Anosov, in which
  types 1 and 2 cannot occur, and the same holds for homeomorphisms
  homotopic to the identity with a rotation set which has non-empty
  interior.  If a non-wandering torus homeomorphism has a unique and
  totally irrational rotation vector, then any minimal set other than
  the whole torus has to be the extension of a Cantor set.\medskip

\noindent {\em 2010 Mathematics Subject Classification.} Primary
54H20, Secondary 37E30, 37E45
\end{abstract}


\maketitle

\section{Introduction and Statement of Results}

As minimal sets relate naturally to many other dynamical notions,
great effort has been devoted to the description of minimal sets and
their intrinsic structure, which led to the identification of
important subclasses like almost periodic or almost automorphic
minimal sets (see, for example, \cite{auslander, veech,shen/yi:1998}
and references therein). However, there exist only very few
situations in which a complete classification of the possible
structure of minimal sets in a given manifold is available. One of the
most important cases are orientation-preserving homeomorphisms of the
circle, whose minimal sets classify into either periodic orbits,
Cantor sets or the whole circle. By means of a suitable Poincar\'e
section, this also provides a classification of minimal sets of flows
on the two-torus generated by fixed point free vector fields, which are
a suspension of one of the three types occurring for circle homeomorphisms.
The Poincar\'e-Bendixon Theorem for planar flows or Aubry-Mather
Theory for twist maps provide further classical examples (see e.g.
\cite{katok/hasselblatt:1997}).  More recently, homeomorphisms of the
two-torus which are homotopic to the identity and have a single,
totally irrational rotation vector were studied in \cite{F}.  The
results in~\cite{F} include a classification of the minimal sets in
terms of the structure of their complement. For general surface
homeomorphisms, a more restricted classification is given in
\cite{nakayama} under the additional {\em a priori} assumption of
local connectedness. Here, our aim is to extend the main result in
\cite{F} to general homeomorphisms of the torus and to provide a
strengthened classification for non-wandering torus homeomorphisms.

Let $\T^2=\R^2/\Z^2$ denote the two-dimensional torus, $\pi \colon
\R^2\to\T^2$ the canonical projection and $\homeo(\T^2)$ the set of
homeomorphisms of $\T^2$. An open and connected set, respectively a
compact and connected set, in the plane $\R^2$ or torus $\T^2$ is
called a {\em domain}, respectively a {\em continuum}. We say an open
set $D \ssq \torus$ is a {\em topological disk} if it is homeomorphic
to $\D=\{z\in\C\mid |z|<1\}$ and call it {\em bounded}, if the
connected components of $\pi^{-1}(D)$ are bounded. Similarly, we say
an open set $A\ssq\torus$ is a {\em (topological) annulus} if it is
homeomorphic to the open annulus $\A=\kreis \times \R$ and call $A$
{\em essential} if it contains a closed curve which is homotopically
non-trivial in $\torus$. We call an open set $B\ssq \torus$ {\em
  doubly essential} if it contains two homotopically nontrivial curves
of different homotopy types. A subset $A$ of the torus (or any
surface) is called a {\em circloid}, if it is contained in an embedded
open annulus $\mathcal{A}$ and further (i) it is compact and
connected, (ii) its complement in $\mathcal{A}$ consists of exactly two connected
components $\U^-(A)$ and $\U^+(A)$ which are unbounded\foot{Here, we
  identify $\mathcal{A}$ with \A\ to define unboundedness.} below,
respectively above, and (iii) it is minimal with respect to inclusion
with properties (i) and (ii). A set which only satisfies (i) and (ii)
is called an {\em annular continuum}. The homotopy type of $A$ is
defined as the homotopy type of an essential loop in $\mathcal{A}$. We
call $A$ essential if this homotopy type is non-zero and homotopically
trivial otherwise. We call $A\ssq\torus$ {\em non-separating} if $A^c
= \T^2 \setminus A$ is connected.

Circloids and annular continua appear frequently in the theory of
torus and annular homeomorphisms
\cite{handel:1982,herman:1986,franks/lecalvez:2003,%
koropecki2009aperiodic,matsumoto:2011,jaeger:2009b}
and can be thought of as a generalisation of closed curves, adapted to
the needs of topological dynamics. When $f$ and
$\widetilde f$ are homeomorphisms of the two-torus with minimal sets
$\mathcal{M}$ and $\widetilde{\mathcal{M}}$, we say $(f,\mathcal{M})$
is an {\em extension} of $(\widetilde f,\widetilde{\mathcal{M}})$ if
there exists a continuous onto map $\Phi:\torus \to \torus$, homotopic
to the identity, which satisfies $\Phi\circ f = \widetilde f \circ
\Phi$ and $\Phi(\mathcal{M})=\widetilde{\mathcal{M}}$. When
$\widetilde{\mathcal{M}}$ is finite we simply say $\mathcal{M}$ is an
{\em extension of a periodic orbit}, when $\widetilde{\mathcal{M}}$ is
a Cantor set we say $\mathcal{M}$ is an {\em extension of a Cantor
  set}.

Given a connected component $U$ of $\mathcal{M}^c$ we say that it is
periodic if there exists $n\in\N$ such that $f^n(U)=U$, otherwise we
say that it is wandering. A minimal set of a homeomorphism $f$ of the
torus is a non-empty $f$-invariant compact set that is minimal,
relative to inclusion, with respect to the properties of being
$f$-invariant and compact. Our main result is the following.

\begin{teo}[Classification Theorem]\label{TEOA}
  Suppose $f\in\homeo(\torus)$ and $\mathcal{M} \neq \T^2$ is a
  minimal set. Then the complement of $\mathcal{M}$ consists of either:
\begin{enumerate}
\item[$(1)$] a disjoint union of topological disks.
\item[$(2)$] a disjoint union of at least one essential annulus and
  topological disks, where either:
\begin{itemize}
\item[\textup{(i)}] the essential annuli in $\mathcal{M}^c$ are
  periodic and $\mathcal{M}$ is the orbit of the boundary of an
  essential periodic circloid, or
\item[\textup{(ii)}] every connected component in $\mathcal{M}^c$ is
  wandering and $f$ is semi-conjugate to a one-dimensional irrational
  rotation,
\end{itemize}
\item[$(3)$] a disjoint union of exactly one doubly essential component and a number of
  bounded topological disks, where either:\begin{itemize}
\item[\textup{(i)}] $\mathcal{M}$ is an extension of a periodic
orbit, or
\item[\textup{(ii)}]  $\mathcal{M}$ is an extension of a Cantor set.
\end{itemize}
\end{enumerate}
Moreover, in case (1) bounded periodic disks cannot occur and in case
(2) only essential annuli can be periodic.
\end{teo}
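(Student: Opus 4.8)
The plan is to organize the argument around the fundamental dichotomy coming from the topology of the complement $\mathcal{M}^c$ and the homotopy types of curves it can carry. First I would establish the basic structural trichotomy: since $\mathcal{M}$ is a minimal continuum with $\mathcal{M}\neq\T^2$, each connected component $U$ of $\mathcal{M}^c$ is an open connected surface, hence (being a subsurface of $\T^2$) is either a topological disk, an annulus (essential or inessential), or contains curves of two independent homotopy types, i.e.\ is doubly essential. The key combinatorial observation is that the lifts to $\R^2$ of two complementary components carrying non-parallel essential curves would have to intersect, contradicting disjointness; hence at most one \emph{homotopy direction} of essential behavior can appear among all components simultaneously. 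This gives cases (1) [only disks], (2) [essential annuli all of one homotopy class, plus disks], and (3) [a single doubly essential component plus disks — and the disks there must be bounded, since an unbounded disk together with a doubly essential component would again force lifts to intersect].

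Next I would treat the three cases in turn. In case (2), the union of the essential annuli (taken in $\T^2$) forms an open essential $f$-invariant (as a set) subsurface of the torus; I would use the minimality of $\mathcal{M}$ together with the theory of circloids and annular continua recalled in the introduction to show that, after collapsing inessential complementary disks, $\mathcal{M}$ is the boundary of an essential annular continuum, and in fact of an essential circloid $C$. If some essential annulus is periodic of period $n$, then $f^n$ preserves $C$ and its two complementary unbounded regions $\mathcal{U}^\pm(C)$; minimality forces $\mathcal{M}$ to be exactly the $f$-orbit of $\partial C$, giving (2)(i). If instead \emph{no} component is periodic, the induced action of $f$ on the (circle's worth of) complementary essential annuli is an orientation-preserving circle homeomorphism without periodic points, hence semi-conjugate to an irrational rotation, yielding the semi-conjugacy in (2)(ii) and simultaneously showing every component of $\mathcal{M}^c$ is wandering. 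In case (3), the quotient $\T^2/{\sim}$ obtained by collapsing the closure of the unique doubly essential component $B$ to a point (or rather, collapsing each complementary component appropriately) is again a torus, $f$ descends to a homeomorphism $\widetilde f$, and the image $\widetilde{\mathcal{M}}$ of $\mathcal{M}$ is a minimal set \emph{with empty interior in a totally disconnected transverse direction}; one shows $\widetilde{\mathcal{M}}$ is either finite or a Cantor set, producing the semi-conjugacy $\Phi$ and hence the extension statements (3)(i), (3)(ii). The final "Moreover" clause is then immediate from the case analysis: in case (1) a \emph{bounded} periodic disk $U$ would have $f^n(U)=U$ with $\partial U\subseteq\mathcal{M}$, and filling in $U$ (taking the union with its bounded complementary component) produces a proper, nonempty, compact $f^n$-invariant subset, contradicting minimality of $\mathcal{M}$ under $f^n$-invariant arguments plus a standard connectedness argument; and in case (2) the same filling-in argument rules out periodic disks, leaving only the essential annuli as possible periodic components.

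The main obstacle, I expect, is the rigorous proof that at most one "essential direction" can occur across all complementary components \emph{simultaneously}, and the accompanying claim that in case (3) the doubly essential component is unique and all other components are bounded disks: these require careful lifting arguments in $\R^2$, controlling how the connected components of $\pi^{-1}(U)$ for various $U$ sit relative to one another, and using that $\mathcal{M}$ is connected and its lift separates the plane in a constrained way. A secondary difficulty is extracting the precise circloid structure in case (2) — namely that $\mathcal{M}$ is not merely related to an essential annular continuum but is exactly the boundary of an essential circloid, which needs the minimality of $\mathcal{M}$ to pin down the continuum uniquely — and, in case (3), verifying that the collapsing map $\Phi$ is genuinely homotopic to the identity and intertwines the dynamics, so that the resulting $(\widetilde f,\widetilde{\mathcal{M}})$ legitimately witnesses $\mathcal{M}$ as an extension in the sense defined above.
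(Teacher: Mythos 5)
Your opening step contains the central gap. You assert, as a purely topological fact, that each connected component of $\mathcal{M}^c$ is a disk, an annulus, or doubly essential, "since $\mathcal{M}$ is a minimal continuum". But a minimal set need not be connected, and even for a connected closed set the trichotomy is false as stated: an open connected subsurface of $\mathbb{T}^2$ can be an inessential annulus or a domain of higher connectivity (e.g.\ the region between two nested homotopically trivial circles, or a trivially embedded pair of pants), which is none of the three types. Ruling these out is exactly where the dynamics must enter, and it is the heart of the paper's argument: one fills in each homotopically trivial (resp.\ essential) component $\Sigma$, shows $\partial\,\mathrm{Fill}(\Sigma)\subseteq\mathcal{M}$, and uses a Zorn's lemma argument on increasing unions of filled disks to show that the complement of all the fills still meets $\mathcal{M}$; minimality then forces $\mathcal{M}$ into that complement, whence $\mathrm{Fill}(\Sigma)=\Sigma$ for every component, i.e.\ trivial components are disks and essential components are annuli. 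Your proposal has no substitute for this step, so the trichotomy you build everything on is unsupported. Relatedly, your argument for the ``Moreover'' clause does not close: producing a proper compact $f^n$-invariant set by filling a bounded periodic disk contradicts nothing, since that set is not contained in $\mathcal{M}$ (minimality only constrains invariant subsets of $\mathcal{M}$). The paper instead shows (Lemmas 3.2--3.3) that a connected minimal set bounding a periodic bounded disk forces, via a counting argument on integer translates of the lifted disk and Lemma 2.7, a doubly essential complementary component --- contradicting type 1 or 2; and to exclude \emph{periodic disks altogether} in case (2)(i) (including unbounded ones) it invokes the Matsumoto--Nakayama theorem on minimal continua of sphere homeomorphisms after cutting the torus along an essential loop in the periodic annulus. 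Your sketch addresses neither of these.

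Two further points. In case (2) your dichotomy ``some essential annulus periodic / no component periodic'' and the appeal to ``the induced circle homeomorphism without periodic points'' are not justified: there is no circle a priori, and the paper's route is different --- it shows the rotation number orthogonal to the homotopy vector $(p,q)$ is a single number, treats the irrational case by a genuine semiconjugacy theorem (Koropecki), and in the rational case builds a periodic essential circloid as a Hausdorff limit of iterates of an upper-frontier circloid (Lemmas 3.7--3.12); absence of periodic complementary components does not by itself preclude a rational rotation number, so your inference to an irrational rotation is a gap. In case (3) the construction you describe is wrong as stated: collapsing the closure of the doubly essential component $B$ collapses $\mathcal{M}$ itself (by minimality $\mathcal{M}=\partial B$) to a single point. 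The correct construction collapses the \emph{filled connected components of $\mathcal{M}$}, verifies that these fills are pairwise disjoint and form an upper semicontinuous decomposition (this requires the Hausdorff-limit Lemma 3.19), and then applies Moore's theorem to obtain a projection $\Phi$ of $\mathbb{T}^2$ to itself, homotopic to the identity, under which $\mathcal{M}$ maps to a totally disconnected minimal set, hence a periodic orbit or a Cantor set. Without Moore's theorem (or an equivalent decomposition result) your claim that the quotient is again a torus and that $f$ descends to a homeomorphism is unsubstantiated.
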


We say a minimal set $\mathcal{M}$ is of type $N$ with $N=1,2,3$ if it
belongs to case $N$ in the above classification. This classification
provides a basic framework for a more precise study of the different
cases. In two important situations types 1 and 2 can be excluded. The
first is the case where $f\in\homeo(\torus)$ is homotopic to an Anosov
homeomorphism on \torus. Using classical results on Anosov
homeomorphisms~\cite{Bowen,manning:1974, walter} one obtains the
following.

\begin{cor}\label{l.anosov}
  Suppose $f\in\homeo(\torus)$ is homotopic to an Anosov
  homeomorphism. Then any minimal set of $f$ is of type 3.
\end{cor}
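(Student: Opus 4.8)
The plan is to apply the Classification Theorem and rule out that a minimal set $\mathcal{M}\neq\torus$ is of type~1 or type~2, leaving type~3. Let $g$ be an Anosov homeomorphism homotopic to $f$; by the classical theory of Anosov systems on the torus (\cite{Bowen,manning:1974,walter}) $g$ is topologically conjugate to a hyperbolic linear automorphism $A$, so $f$ is homotopic to $A$ and the linear part of $f$ is the hyperbolic matrix $A$, satisfying $\{\pm1\}\cap\mathrm{spec}(A)=\emptyset$. Furthermore $f$ is semiconjugate to $A$ through a map $\Phi\colon\torus\to\torus$ homotopic to the identity: any lift $F$ of $f$ has $F-\tilde A$ bounded and $\Z^2$-periodic ($\tilde A$ the linear lift), and hyperbolicity of $\tilde A$ yields such a $\Phi$ (continuous, onto, $\Phi\simeq\Id$, $\Phi\circ f=A\circ\Phi$) by a standard shadowing/contraction argument.

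Type~2 is excluded directly. Since $\mathcal{M}$ is $f$-invariant, $f$ permutes the connected components of $\mathcal{M}^c$ and carries essential annuli to essential annuli; any two disjoint essential annuli in $\torus$ contain disjoint, hence isotopic, essential simple closed curves, so all essential annular components of $\mathcal{M}^c$ share a common primitive homotopy class $v$. If $U$ is one of them, then $f(U)$ is again such a component, so its core class $\pm A v$ equals $\pm v$; thus $A v=\pm v$ with $v\neq 0$, contradicting $\{\pm1\}\cap\mathrm{spec}(A)=\emptyset$. (This covers both subcases 2(i) and 2(ii) without using their extra descriptions.)

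For type~1 the strategy is to transport $\mathcal{M}$ to the linear model, where Lebesgue measure is available. A type~1 set is \emph{fully essential}: every essential simple closed curve meets $\mathcal{M}$, else it would lie in an inessential (disk) component of $\mathcal{M}^c$. Let $N:=\Phi(\mathcal{M})$; a routine argument shows $N$ is $A$-minimal (if $\emptyset\neq N'\ssq N$ is closed and $A$-invariant, then $\Phi^{-1}(N')\cap\mathcal{M}$ is nonempty, closed and $f$-invariant, hence equals $\mathcal{M}$), and $N\neq\torus$ since $A$, having a fixed point, is not minimal. I would then argue $N$ is fully essential. Fix an essential simple closed curve $\gamma$ of primitive class $v$ and a loop $\beta$ of class $v'$, with $\{v,v'\}$ a basis of $\Z^2$: if $\beta$ missed $\Phi^{-1}(\gamma)$ then $\Phi(\beta)$ would lie in the essential open annulus $\torus\smin\gamma$, forcing $[\Phi(\beta)]\in\Z v$; but $[\Phi(\beta)]=\Phi_*[\beta]=v'\notin\Z v$. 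So every loop of class $v'$ meets $\Phi^{-1}(\gamma)$; yet, if $\gamma\cap N=\emptyset$, then $\Phi^{-1}(\gamma)$ is a compact subset of $\mathcal{M}^c$, hence lies in the union $\Omega$ of the finitely many disk components of $\mathcal{M}^c$ it meets — an inessential open set, which by a standard property of inessential sets is disjoint from some essential loop of class $v'$, a contradiction. Thus $\gamma$ meets $N$; as $\gamma$ was arbitrary, $N$ is fully essential, hence by the Classification Theorem applied to $A$ it is a proper minimal set of $A$ of type~1.

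It remains to show that the hyperbolic linear $A$ admits no proper minimal set $N$ of type~1 — the crux. Here $A$ preserves Lebesgue measure $\Leb$. Let $D$ be a component of $N^c$; by type~1 it is a topological disk, hence inessential, so some component $\widetilde D$ of $\pi^{-1}(D)$ maps homeomorphically onto $D$, giving $\Leb(\widetilde D)=\Leb(D)<\infty$ (even if $\widetilde D$ is unbounded). If $A^p(D)=D$, then $\tilde A^p(\widetilde D)=\widetilde D+w$ for some $w\in\Z^2$, so the affine map $\widehat G(x)=\tilde A^p(x)-w$ maps $\widetilde D$ onto itself and is affinely conjugate to the hyperbolic linear map $\tilde A^p$; since the only recurrent point of a hyperbolic linear map is the origin, $\widehat G$ has a single recurrent point, and Poincar\'e recurrence for the finite $\widehat G$-invariant measure $\Leb|_{\widetilde D}$ forces $\Leb(D)=0$. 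If $D$ is wandering, its orbit consists of pairwise disjoint sets of equal measure inside $\torus$, so again $\Leb(D)=0$. Hence $\Leb(N^c)=\sum_D\Leb(D)=0$, impossible since $N^c$ is nonempty and open. Therefore $\mathcal{M}$ is of neither type~1 nor type~2, so by the Classification Theorem it is of type~3. The steps I expect to demand the most care are the existence of $\Phi$ with the stated properties and, in the type~1 reduction, controlling the (possibly unbounded) inessential disk components; once in the linear model the Lebesgue--recurrence argument is routine.
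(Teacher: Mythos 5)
Your exclusion of type 2 (the common homotopy class of the essential annular components must be an eigenvector of the hyperbolic linear part, which is impossible) is correct, and so is the Lebesgue/Poincar\'e-recurrence argument showing that a hyperbolic linear automorphism has no proper minimal set of type 1. The genuine gap is in the transfer step where you claim $N=\Phi(\mathcal{M})$ is fully essential. Having covered $\Phi^{-1}(\gamma)$ by a finite union $\Omega$ of disk components of $\mathcal{M}^c$, you invoke a ``standard property of inessential sets'' to produce an essential loop of class $v'$ disjoint from $\Omega$. No such property holds once unbounded disks are allowed: take $\gamma_0\ssq\torus$ an essential loop of class $v$ and, inside the annulus $\torus\smin\gamma_0$, an open strip that winds infinitely often around the $v$-direction as it accumulates on $\gamma_0$ from both sides. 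This strip is a single open topological disk $D$ (unbounded, inessential), yet every loop of class $v'\notin\Z v$ must cross the annulus and hence meet $D$: an arc in $\torus\smin D$ reaching $\gamma_0$ would have to converge in angle while being trapped in a window that rotates indefinitely, which is impossible. So the complement of $D$ contains essential loops only of classes in $\Z v$. Since in type 1 the Classification Theorem only forbids \emph{bounded periodic} disks, the components making up $\Omega$ may be unbounded (and dense), and your argument gives no reason why a loop of class $v'$ avoiding $\Omega$ should exist. This is exactly the step you flagged as delicate, and as written it does not go through, so the type-1 case is not ruled out.

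Note also that you never use Bowen's theorem, which is what makes the argument short and avoids the essentiality transfer altogether: if $\mathcal{M}$ were of type 1 or 2, then $\mathcal{M}^c$ has no doubly essential component, so by Lemma~\ref{l.compact_bounded} $\mathcal{M}$ must have an unbounded connected component $\Lambda$. The semiconjugacy $\Phi$ is homotopic to the identity, so its lift displaces points by a bounded amount and $\Phi(\Lambda)$ is an unbounded continuum contained in $\Phi(\mathcal{M})$; but by Manning and Bowen, $\Phi(\mathcal{M})$ is a minimal set of the (conjugate of a) linear Anosov, hence a periodic orbit or a Cantor set, in particular totally disconnected --- a contradiction. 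Replacing your fully-essential/Lebesgue route for type 1 by this boundedness argument (or supplying a genuine proof of the missing loop-avoidance claim) would repair the proof.
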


The second situation is more intricate and concerns the case where $f$
is homotopic to the identity. For such maps, an important topological
invariant is the rotation set given by
\begin{equation}
  \label{e.rotset}
  \rho(F) \ = \ \left\{\rho\in\R^2 \mid \exists z_i\in\R^2,\
   n_i\nearrow\infty : \iLim \left(F^{n_i}(z_i)-z_i\right)/n_i = \rho \right\} \ ,
\end{equation}
where $F:\R^2\to \R^2$ is a lift of $f$. This notion was introduced by
Misiurewicz and Ziemian, who showed that $\rho(F)$ is always a compact
and convex subset of the plane \cite{misiurewicz/ziemian:1989}.

\begin{cor}\label{t.non-empty-rotset}
  Suppose the rotation set of $f\in\homtwo$ has non-empty interior.
  Then any minimal set is of type 3.
\end{cor}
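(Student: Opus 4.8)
The plan is to apply the Classification Theorem (Theorem~\ref{TEOA}) to reduce to excluding types $1$ and $2$, and in each of those cases to derive a contradiction from the combination of Franks' realization theorem (every rational vector in the interior of $\rho(F)$ is the rotation vector of a periodic orbit) with a localization of the non-wandering set $\Omega(f)$. Two preliminary remarks will be used throughout. First, a finite minimal set is a periodic orbit, whose complement is a single connected doubly essential open set, hence of type $3$; so in types $1$ and $2$ the set $\mathcal{M}$ is infinite and, being minimal, contains no periodic point of $f$. Second, every periodic point of $f$ lies in $\Omega(f)$, so it suffices to show that in types $1$ and $2$ the set $\Omega(f)$ cannot carry periodic orbits whose rotation vectors fill a two-dimensional region.

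Suppose $\mathcal{M}$ is of type $1$. A compact subset of $\torus$ that is not fully essential either is inessential --- in which case its complement contains a doubly essential component --- or is annular, in which case its complement contains an essential annulus; so if such a set is minimal it is of type $3$ or type $2$. Hence a type-$1$ minimal set $\mathcal{M}$ is fully essential. The complementary components of a fully essential compact subset of $\torus$ are bounded, so by the final assertion of Theorem~\ref{TEOA} each of them is non-periodic; being pairwise disjoint and permuted by $f$, they are wandering open sets, whence $\Omega(f)\subseteq\mathcal{M}$. By the first preliminary remark $f$ then has no periodic point at all, and Franks' theorem forces $\operatorname{int}\rho(F)=\emptyset$, contradicting the hypothesis.

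Suppose $\mathcal{M}$ is of type $2$. By the final assertion of Theorem~\ref{TEOA} only essential annuli among the components of $\mathcal{M}^c$ can be periodic; the remaining components, being permuted and pairwise disjoint, are wandering, so $\Omega(f)\subseteq \mathcal{M}\cup A_1\cup\dots\cup A_r$, where $A_1,\dots,A_r$ are the periodic essential annuli in $\mathcal{M}^c$. These annuli all have the same primitive homotopy type $(a,b)\in\Z^2$ up to sign: two disjoint essential annuli of non-parallel types would contain disjoint essential simple closed curves of non-parallel types, contradicting the fact that any two such curves meet in $\torus$. By the first preliminary remark no periodic point of $f$ lies in $\mathcal{M}$, so every periodic point of $f$ lies in some $A_i$; passing to the annular covering of $\torus$ associated with $(a,b)$, a lift of such a periodic orbit is confined to a single connected component of the preimage of $A_i$ (distinct components being disjoint), and the displacement of that component under the lift of $f$ depends only on the homotopy class of the lift. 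Consequently all periodic rotation vectors of $f$ lie on one fixed affine line parallel to $(a,b)$. If $\operatorname{int}\rho(F)\neq\emptyset$, this two-dimensional set contains rational vectors off that line, each realized by a periodic orbit (Franks), again a contradiction. (In sub-case $2$(ii) one may instead argue directly: the semiconjugacy $h\colon\torus\to\kreis$ to an irrational rotation lifts to $H\colon\R^2\to\R$ with $H-\langle v,\cdot\rangle$ bounded for $v=h_*\in\Z^2\smin\{0\}$ and $H\circ F=H+c$ for a constant $c$, whence $\langle v,\rho\rangle=c$ for every $\rho\in\rho(F)$ and $\rho(F)$ lies in a line.)

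The step I expect to require the most care is the verification of the two topological inputs: that a fully essential compact subset of $\torus$ has bounded complementary components (used in type~$1$ to pass from ``bounded'' to ``non-periodic''), and the level-counting in the annular cover showing that periodic orbits inside a periodic essential annulus of type $(a,b)$ have rotation vector on a fixed line parallel to $(a,b)$ (used in type~$2$). One should also make sure Franks' realization theorem is quoted in the generality valid for an arbitrary $f\in\homtwo$, since in types $1$ and $2$ the map $f$ need not be non-wandering.
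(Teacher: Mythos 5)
Your reduction via Theorem~\ref{TEOA} and Franks' realization theorem breaks down at the type-1 case, precisely at the step you flagged: the claim that a type-1 minimal set is ``fully essential'' and therefore has only \emph{bounded} complementary components is false. Case (1) of the Classification Theorem explicitly allows \emph{unbounded} topological disks (a trivially embedded disk in \torus\ can have unbounded lift components; cf.\ Remark~\ref{r.boundedness} and the unbounded-disk problems in Section~\ref{Examples}), and the ``Moreover'' clause only excludes \emph{bounded} periodic disks. Excluding or handling unbounded \emph{periodic} disks is exactly the delicate point: in the non-wandering version of the theorem this requires Koropecki's aperiodicity theorem (Lemma~\ref{c.koro}), and for a general $f\in\homtwo$ no such exclusion is available. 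Hence you cannot conclude that every component of $\mathcal{M}^c$ is wandering, nor that $\Omega(f)\ssq\mathcal{M}$; the periodic orbits supplied by Franks could a priori sit inside unbounded periodic disks, and your contradiction disappears. A repair along your lines would need, for instance, the observation that any periodic point in a periodic trivial disk $D$ of period $p$ with $F^p(D_0)=D_0+u$ has rotation vector exactly $u/p$, plus some control on the family of periodic disks, which is not clear. The paper avoids all of this: it first kills the essential components by Lemma~\ref{l.rational} (one periodic or wandering essential annulus already forces $\rho(F)$ into a rational line), and then, in the remaining all-disk case, uses Misiurewicz--Ziemian to produce a minimal set $\mathcal{M}_\rho$ with \emph{totally irrational} $\rho\in\inte(\rho(F))$; recurrence forces $\mathcal{M}_\rho$ into a periodic complementary disk, bounded or not, and total irrationality then pushes the lifted orbit into a nontrivial integer translate of the invariant lift component, contradicting homotopical triviality of the disk.

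Your type-2 argument has a secondary gap: you assert that all periodic rotation vectors lie on \emph{one} affine line parallel to $(a,b)$, but distinct periodic essential annuli could a priori contribute different parallel lines, and you tacitly assume there are only finitely many such annuli $A_1,\dots,A_r$; a countable union of parallel lines can contain all rational points of an open set, so no contradiction follows without finiteness or a same-line argument. Both points are fixable (order preservation of the lifted strips, or simply quoting Lemma~\ref{l.rational}, which confines the \emph{entire} rotation set, not just periodic vectors, to a rational line), but as written the step is incomplete. Your concern about Franks' theorem, on the other hand, is unfounded: realization of rational vectors in $\inte(\rho(F))$ by periodic orbits holds for arbitrary $f\in\homtwo$, with no non-wandering hypothesis.
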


A result of Misiurewicz and Ziemian \cite{misiurewicz/ziemian:1991} states that
for all $\rho\in\inte(\rho(F))$ there exists a minimal set $M_\rho$ such that
$\rho$ is the unique rotation vector on $M_\rho$. In particular, there exist
uncountably many minimal sets. Corollary~\ref{t.non-empty-rotset} implies that for
all non-rational $\rho$ these are extensions of Cantor sets.

In the non-wandering case, a result of
Koropecki~\cite{koropecki2009aperiodic} on aperiodic invariant
continua of surface homeomorphisms allows to exclude unbounded disks
in type 1 of the Classification Theorem. This leads to the following
more restrictive classification. Recall that $f\in\homeo(\torus)$ is
called {\em non-wandering}, if there exist no wandering open sets.

\begin{teo}[Classification Theorem, non-wandering version]\label{teoa.nonwandering}
  Suppose $f\in\homeo(\torus)$ is non-wandering and $\M \neq \T^2$ is a minimal set. Then
  one of the following holds:
  \begin{itemize}
  \item[$(1^\textrm{nw})$] $\M$ is a periodic orbit;
  \item[$(2^\textrm{nw})$] $\M$ is the orbit of a periodic circloid;
  \item[$(3^\textrm{nw})$] $\M$ is the extension of a Cantor set, with all connected
    components non-separating.
  \end{itemize}
\end{teo}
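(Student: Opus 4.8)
The plan is to derive Theorem~\ref{teoa.nonwandering} from the Classification Theorem~\ref{TEOA} by using the extra hypothesis --- that $f$ is non-wandering --- to collapse the alternatives of the latter. By Theorem~\ref{TEOA} the complement $\M^c$ falls into one of the cases (1), (2), (3), and I would treat these in turn. The engine of the whole argument is one elementary observation: a connected component $U$ of $\M^c$ is a non-empty open set, and $f^n(U)$ is again a component of $\M^c$, so $f^n(U)\cap U\neq\emptyset$ forces $f^n(U)=U$; hence, since $f$ is non-wandering, \emph{every component of $\M^c$ is periodic}, and equivalently a non-periodic component of $\M^c$ is wandering. With this, type~1 is ruled out immediately: by the last sentence of Theorem~\ref{TEOA} every bounded disk in $\M^c$ is non-periodic, hence wandering, hence forbidden, while the result of Koropecki on aperiodic invariant continua~\cite{koropecki2009aperiodic} excludes unbounded disks in type~1 in the non-wandering setting (as recalled before the statement of the theorem). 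Thus $\M^c=\emptyset$, contradicting $\M\neq\T^2$, so type~1 cannot occur.

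Type~2 is handled the same way. The ``moreover'' clause of Theorem~\ref{TEOA} says that in type~2 only essential annuli can be periodic, so every topological disk in $\M^c$ is non-periodic and therefore absent. Hence $\M^c$ is a non-empty disjoint union of essential annuli; in particular alternative~(ii) of type~2, which requires every component of $\M^c$ to be wandering, is impossible, and we are left with alternative~(i): the essential annuli are periodic and $\M$ is the orbit of the boundary of an essential periodic circloid $C$. Since a circloid has empty interior, this boundary is $C$ itself, so $\M$ is the orbit of a periodic circloid and $(2^{\mathrm{nw}})$ holds.

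The substantial case is type~3, where $\M^c$ consists of exactly one doubly essential component $U$ together with bounded topological disks $D_j$. As $f$ permutes the components of $\M^c$ and $U$ is the only doubly essential one, $f(U)=U$, and by the observation above every $D_j$ is periodic. I would then follow the two alternatives. If $\M$ is an extension of a periodic orbit, let $\Phi$ be the semi-conjugacy, $F=\Phi(\M)$ the (finite) periodic orbit, $m=|F|$, and $\M_q=\Phi^{-1}(q)\cap\M$ for $q\in F$; the $\M_q$ are cyclically permuted by $f$, and each is $f^m$-minimal, because any non-empty $f^m$-invariant closed $A\ssq\M_q$ satisfies $\bigcup_{i=0}^{m-1}f^i(A)=\M$ by minimality of $\M$, forcing $A=\M_q$. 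If the $\M_q$ are singletons then $\M$ is a periodic orbit, i.e.\ $(1^{\mathrm{nw}})$; otherwise one must show, using that $\M$ has exactly one doubly essential complementary domain, that $\M_q$ is a circloid (essential or homotopically trivial), so that $\M$ is the orbit of a periodic circloid and $(2^{\mathrm{nw}})$ holds. If instead $\M$ is an extension of a Cantor set, it remains to verify that every component of $\M^c$ is non-separating. For a bounded disk $D_j$ this is elementary: the components of $\pi^{-1}(D_j)$ are bounded and simply connected, so $\R^2\smin\pi^{-1}(D_j)$ is connected, hence so is $\torus\smin D_j$. The real point is that $U$ is non-separating, i.e.\ that $\torus\smin U=\M\cup\bigcup_j D_j$ is connected although $\M$ itself --- mapping onto a Cantor set --- is highly disconnected, so the disks $D_j$ must ``bridge'' the pieces of $\M$.

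The main obstacle is exactly this type~3 analysis: the step that an extension of a periodic orbit which is not itself a periodic orbit must be a periodic circloid (ruling out ``intermediate'' invariant continua and extracting the annular structure forced by the single doubly essential complementary domain), and the step that the doubly essential component is non-separating in the Cantor-set case. I expect both to rest on Koropecki's theorem on aperiodic invariant continua~\cite{koropecki2009aperiodic} together with the structure theory of circloids and annular continua, the bookkeeping of how the (possibly infinitely many) periodic disks $D_j$ accumulate onto $\M$ being the main technical nuisance.
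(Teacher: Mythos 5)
Your skeleton (use the non-wandering hypothesis to make every component of $\M^c$ periodic, then prune the cases of Theorem~\ref{TEOA}) is the same as the paper's, but the steps that actually carry the weight are either missing or replaced by incorrect claims. In type 2 you pass from ``$\M$ is the orbit of $\partial C$ for a periodic essential circloid $C$'' to $(2^{\textrm{nw}})$ via the assertion that ``a circloid has empty interior''. That is not a topological fact: circloids may have non-empty interior, and this is precisely why the paper invokes Lemma~\ref{l.circloid-emptyint} (from \cite{jaeger:2009b}), a genuinely dynamical statement whose hypotheses are that $f$ is non-wandering and that the circloid carries no periodic points. Without an argument of this kind you only obtain that $\M$ is the orbit of the boundary of a circloid, which is strictly weaker than $(2^{\textrm{nw}})$, since $\partial C$ need not itself be a circloid when $\inte(C)\neq\emptyset$. (A smaller issue of the same nature occurs in type 1: excluding unbounded disks is not just a quotation of the introduction; the paper proves Lemma~\ref{c.koro} from Theorem~\ref{t.koro}, so a periodic unbounded disk has a periodic point on its boundary, which lies in $\M$ and forces $\M$ to be a periodic orbit, incompatible with type 1.)

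In type 3 there are two genuine gaps. First, you misread the non-separating clause: it refers to the connected components of $\M$, not of $\M^c$, and the statement you set out to prove --- that the doubly essential component $U$ is non-separating, i.e.\ that $\torus\smin U$ is connected --- is false in general: if $\M$ is itself a Cantor set (as happens, e.g., for minimal sets of transitive, hence non-wandering, Anosov homeomorphisms), then $\M^c=U$ is a single doubly essential component and $\torus\smin U=\M$ is totally disconnected. The correct argument is again dynamical: for $\Lambda\in\conn(\M)$, the set $\filll(\Lambda)$ lies in a Moore fiber over a non-periodic point of the minimal Cantor set, hence is disjoint from all its iterates; non-wandering then forces $\inte(\filll(\Lambda))=\emptyset$, so $\filll(\Lambda)=\Lambda$ and $\Lambda$ is non-separating. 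Second, in the periodic-orbit-extension case the decisive implication --- that the fibers $\M_q$ (equivalently, the components of $\M$) are circloids --- is exactly what you leave as ``one must show''. The paper's route is: $\M$ contains no periodic points (otherwise it is a periodic orbit), so each component is an aperiodic invariant continuum for an iterate of $f$; Theorem~\ref{t.koro} makes it an annular continuum; and minimality together with Lemma~\ref{l.frontiers} and Lemma~\ref{l.boundary-frontier} forces it to coincide with a circloid. You name the right tool, but the two places where the non-wandering hypothesis enters in an essential way are precisely the ones not carried out.
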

Note that $(1^\textrm{nw})$ and $(3^\textrm{nw})$ belong to type $(3)$
in Theorem~\ref{TEOA}. $(2^\textrm{nw})$ belongs either to $(2)$ or
$(3)$, depending on whether the circloid is essential or not, since
the orbit of a homotopically trivial periodic circloid is a periodic
orbit extension.

Further information can be deduced if the rotation
set of $f\in\homtwo$ is reduced to a single point. In this case, we
call $f$ a {\em pseudo-rotation}.
\begin{cor}\label{t.ipr}
  Suppose $f$ is a non-wandering pseudo-rotation with rotation vector
  $\rho$ and $\mathcal{M} \neq \T^2$ is a minimal set.
  \begin{itemize}
  \item[(a)] If $\rho$ is totally irrational (its coordinates are
    rationally independent), then $\M$ is an extension of a Cantor set.
  \item[(b)] If $\rho$ is rational, then $\M$ is either an extension
    of a Cantor set, or the periodic orbit of either a point or a
    homotopically trivial circloid.
  \end{itemize}
\end{cor}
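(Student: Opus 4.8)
The plan is to deduce both parts from the non-wandering Classification Theorem (Theorem~\ref{teoa.nonwandering}) together with a few standard facts about rotation vectors and about fixed or periodic points inside invariant circloids. Since $f$ is non-wandering, Theorem~\ref{teoa.nonwandering} shows that $\M$ is of type $(1^{\mathrm{nw}})$, $(2^{\mathrm{nw}})$ or $(3^{\mathrm{nw}})$. A periodic orbit is in particular the periodic orbit of a point; type $(3^{\mathrm{nw}})$ is an extension of a Cantor set; and type $(2^{\mathrm{nw}})$ with a homotopically trivial circloid is the periodic orbit of that circloid (equivalently, by the remark following Theorem~\ref{teoa.nonwandering}, a periodic-orbit extension). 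Hence part (b) reduces to the assertion that, when $\rho\in\Q^2$, the set $\M$ cannot be of type $(2^{\mathrm{nw}})$ with an \emph{essential} circloid, while part (a) reduces to excluding every type except $(3^{\mathrm{nw}})$ when $\rho$ is totally irrational.

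The elementary input is that a periodic point forces $\rho\in\Q^2$: if $F^n(\tilde z)=\tilde z+q$ with $q\in\Z^2$ then $q/n\in\rho(F)=\{\rho\}$. The same conclusion holds for a periodic-orbit extension, since a semi-conjugacy homotopic to the identity preserves the rotation set and a periodic orbit carries a rational vector. This disposes, in case (a), of type $(1^{\mathrm{nw}})$ and of type $(2^{\mathrm{nw}})$ with a trivial circloid. For an essential periodic circloid $C$ with $f^n(C)=C$, let $v$ be its (primitive) homotopy type and extend it to a basis $\{v,w\}$ of $\Z^2$; a connected component $\tilde C_0$ of $\pi^{-1}(C)$ is invariant under translation by $v$ and has bounded extent in the $w$-direction, and since $F^n$ commutes with integer translations, $F^n(\tilde C_0)=\tilde C_0+kw$ for some $k\in\Z$. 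Iterating gives $F^{mn}(\tilde C_0)=\tilde C_0+mkw$; as $F-\Id$ is bounded, every point of $\M$ has rotation vector $\rho$, and evaluating the integral functional $\lambda$ dual to $w$ along the forward orbit of a point of $\tilde C_0$ yields $\lambda(\rho)=k/n\in\Q$. Since $\lambda$ is primitive, $\lambda(\rho)$ is irrational whenever $\rho$ is totally irrational --- a contradiction. Thus in case (a) only type $(3^{\mathrm{nw}})$ survives, proving (a).

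It remains to prove (b), i.e.\ to exclude an essential periodic circloid $C$ with $f^n(C)=C$ when $\rho\in\Q^2$; the displacement estimate above now only yields the consistent relation $\lambda(\rho)=k/n\in\Q$, so a finer tool is needed. The idea is to consider the prime ends rotation number of $f^n$ along $C$, which --- after passing to the annular cover of $\T^2$ in the $v$-direction and, if necessary, to $f^{2n}$ so that both sides of $C$ are preserved --- is a well-defined element of $\R/\Z$; since every point of $C\subseteq\M$ has rotation vector $\rho$, this number equals $n\mu(\rho)\bmod 1$ with $\mu$ the integral functional dual to $v$, hence is rational because $\rho$ is. A theorem linking rational prime ends rotation numbers of an invariant essential circloid to periodic points inside it (Cartwright--Littlewood, Barge--Gillette; in the generality needed here Koropecki--Le~Calvez--Nassiri), applicable since $f$ is non-wandering, then produces a periodic point of $f$ in $C$. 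But $C$ is a non-degenerate continuum, hence infinite, so the minimal set $\M\supseteq C$ is infinite and contains no periodic point --- a contradiction. Therefore $\M$ is a periodic orbit, the orbit of a homotopically trivial periodic circloid, or an extension of a Cantor set, which is exactly the list in (b). I expect the main obstacle to be this last step --- the identification of the prime ends rotation number with $n\mu(\rho)\bmod 1$ together with the invocation of a periodic-point-in-the-continuum theorem under only the non-wandering hypothesis (which in turn requires checking that the relevant lift is suitably recurrent near $C$); the remainder is routine bookkeeping with rotation vectors and Theorem~\ref{teoa.nonwandering}.
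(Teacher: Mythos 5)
Part (a) of your argument is sound and runs parallel to the paper's: the paper disposes of the periodic-orbit and periodic-orbit-extension cases exactly as you do, and excludes an essential periodic circloid by citing Lemma~\ref{l.circloid-rational} (a periodic circloid forces the rotation set into a rational line), which is precisely what your displacement computation reproves. One small repair: for an exotic (e.g.\ indecomposable) circloid the full preimage in $\R^2$ need not be connected, so a connected component $\tilde C_0$ need not satisfy $\tilde C_0+v=\tilde C_0$; the argument should be run in the annular cover $\R^2/v\Z$, where $C$ lifts homeomorphically to a compact connected set $\hat C$ and a lift of $f^n$ sends $\hat C$ to $\hat C+kw$, after which your estimate $\lambda(\rho)=k/n\in\Q$ goes through verbatim.

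Part (b) is where the genuine gap lies. Your plan hinges on the identification of the prime ends rotation number of the invariant essential circloid with $n\mu(\rho)\bmod 1$, and you yourself flag this as the expected obstacle. That identification is simply false for general torus homeomorphisms (prime end rotation numbers can differ from the rotation numbers realized on the continuum), and establishing it under the non-wandering hypothesis is not ``routine checking of a recurrent lift'': it is essentially the content of the deep results of Matsumoto (and later Koropecki--Le~Calvez--Nassiri) on conservative/non-wandering maps, so as written the crux of (b) is asserted rather than proved, and the vague pointer to Cartwright--Littlewood/Barge--Gillette/KLN does not pin down a statement that covers both the realization of the prime end rotation number and the production of a periodic point. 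The paper avoids prime ends altogether: it quotes Matsumoto's theorem (Theorem~\ref{t.matsumoto}) that for a non-wandering $f\in\homtwo$ an essential annular continuum carrying an $f$-invariant probability measure with rational rotation vector contains a periodic point with that vector. Since the circloid is compact and invariant (for a suitable iterate), Krylov--Bogolyubov provides such a measure, and its rotation vector is $\rho\in\Q^2$ because $f$ is a pseudo-rotation; the resulting periodic point inside $C\subset\M$ contradicts minimality of the infinite set $\M$, exactly the contradiction you aim for. If you replace your prime-ends step by this citation (or by a precisely quoted theorem of equivalent strength), your proof of (b) closes; without it, the essential-circloid case is not actually excluded.
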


The paper is organised as follows. In Section~\ref{Preliminaries}, we
collect several preliminary topological results which will be used in
the later sections. In particular, we describe a procedure to fill in
subsets of the torus, similar to a standard construction in the plane.
Section~\ref{Classification} then contains the proof of the main
classification.  In Section~\ref{applications}, we consider several
special cases and applications of the classification. Finally, in
Section~\ref{Examples} we list and discuss a number of further
problems that naturally arise from the results in this paper.

\medskip

{\bf Acknowledgements.} We thank the referee for thoughtful comments
and suggestions on the manuscript.  We are indebted to Andres
Koropecki and Patrice Le Calvez for helpful comments and remarks. Our
results were first presented at the Visegrad Conference of Dynamical
Systems 2011 in Banska Bystrica, and we would like to thank the
organisers Roman Hric and Lubomir Snoha for creating this opportunity.
T.~J\"ager and A.~Passeggi acknowledge support by an
Emmy-Noether-grant Ja 1721/2-1 of the German Research Council.

\section{Fill-in Constructions}\label{Preliminaries}

In this section we collect several topological facts which are later
mixed with dynamical arguments to obtain our main results. In
particular, we describe a procedure to {\em `fill in'} subsets of the
torus which is similar to a standard construction in the plane, but
requires take care of some subtleties of surface topology. Even though
some of these constructions may be considered folklore, we therefore spell out
the details.

\subsection{Notation}\label{Basics}

Given a metric space $(X,d)$ and $C,D\subset X$, the Hausdorff distance is defined as
\begin{equation}
d_\mathcal{H}(C,D) = \max\{\sup_{x\in C} d(x,D),\sup_{y\in D}d(y,C)\}.
\end{equation}
The convergence of a sequence $\{C_n\}_{n\in \N}$ of subsets in $X$ to
$A\subset X$ in this distance is denoted either by
$C_n\rightarrow_{\mathcal{H}}A$ or by $\lim^\mathcal{H}_{n\to\infty}
C_n = A$. Note that $d_\mathcal{H}(C,D)<\eps$ if and only if $C\ssq
B_\eps(D)$ and $D\ssq B_\eps(C)$, and that the Hausdorff distance defines a metric if one restricts to compact subsets.

The fundamental group of $\torus$ will be denoted by $\pi_1(\torus)$.
Given a domain $U\ssq\torus$, consider the subgroup $G$ in
$\pi_1(\torus)$ given by classes of loops (i.e. simple closed curves)
in $U$. We say that $U$ is \textit{homotopically} \textit{trivial} if
$G=\{0\}$, \textit{essential} if $G$ is isomorphic to $\Z$ and
\textit{doubly essential} if $G$ is isomorphic to $\Z^2$. To an
essential set $U\subset \T^2$ we can associate a homotopy type given
by a vector $(p,q)\in\Z^2$ with $\textrm{gcd}(p,q)=1$, where $(p,q)$
is the generator of $G$. In this case, we call $U$ a $(p,q)$-essential
set. It is verified that an essential set $U$ is $(p,q)$-essential
if and only if every connected component $\widetilde{U}$ of its lift
satisfies $\widetilde U+(p,q)=\widetilde U$. We use $\conn(U)$ to
denote the set of connected components of $U\ssq \torus$ and
$\overline{\R^2}=\R^2\cup\{\infty\}$ to denote the Riemann sphere.  We
call a connected set $A \subset \T^2$ {\em bounded}, if all connected
components of $\pi^{-1}(A)$ are bounded. Note that this does not imply
a uniform bound on the size of the connected components of
$\pi^{-1}(A)$ (see Remark~\ref{r.boundedness}). 

Finally, given a homotopically trivial simple loop $\gamma\ssq\torus$, define the
embedded Jordan disk $B(\gamma):=\pi(B(\gamma_0)) \subset \T^2$, where
$\gamma_0\subset \R^2$ is a lift of $\gamma$ and $B(\gamma_0)$ the
Jordan disk bounded by $\gamma_0$.

\subsection{Fill-in of planar sets}

Given any connected set $A\ssq \R^2$, let $U_\infty(A)$ be the
connected component of $\overline{\R^2} \setminus A$ which contains
the point $\infty$.  The standard way to fill in the set $A$ is to
define
\begin{equation}
\filll_{\R^2}(A) \ = \ \R^2\smin U_\infty(A) \ .
\end{equation}

Note that when $\gamma$ is a loop in $\R^2$, then
$\filll_{\R^2}(\gamma)$ is just the closure of the Jordan domain of
$\gamma$, which will be denoted by $B(\gamma)$. Equivalent definitions
of $\filll_{\R^2}(A)$ are the following. First, if
$\{A_{\alpha}\}_{\alpha\in I}$ is the set of bounded connected
components of $\R^2\smin A$, then
\begin{equation}
\filll_{\R^2}(A):=A\cup \bigcup_{\alpha\in I}A_{\alpha}
\end{equation}
Since the union of a connected set with a connected component of its
complement is connected\foot{This is true in any $\sigma$-compact
  connected Hausdorff space.}, this allows to see in particular that
$\filll_{\R^2}(A)$ is always connected.  Secondly, if we say that a set
$A\ssq\R^2$ is {\em filled-in} if $\overline{\R^2}\smin A$ is
connected, then $\filll_{\R^2}(A)$ is just the smallest filled-in set
that contains $A$. For domains, a third equivalent characterisation is
given by the first part of the next statement.

\begin{lema}\label{l.fill-projection}
Let $A_0\ssq \R^2$ be open and connected. Then
\begin{equation}
  \filll_{\R^2}(A_0)=\{ z\in\torus\mid \exists\textrm{a loop}~\gamma\ssq A_0: z\in B(\gamma)\}.
\end{equation}
Further, for all $v\in\R^2$ we have that $A_0\cap(A_0+v) = \emptyset$
implies $\filll_{\R^2}(A_0)\cap (\filll_{\R^2}(A_0)+v) = \emptyset$.
\end{lema}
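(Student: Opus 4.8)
The plan is to prove the two assertions in turn, using four properties of $\filll_{\R^2}$ that are immediate from $\filll_{\R^2}(A)=\R^2\smin U_\infty(A)$: \emph{monotonicity} ($C\ssq D$ implies $\filll_{\R^2}(C)\ssq\filll_{\R^2}(D)$, since $U_\infty(D)$ is a connected subset of $\overline{\R^2}\smin D\ssq\overline{\R^2}\smin C$ containing $\infty$), \emph{idempotence}, \emph{translation equivariance} ($\filll_{\R^2}(C+v)=\filll_{\R^2}(C)+v$), and $\filll_{\R^2}(\gamma)=B(\gamma)$ for a loop $\gamma$. I will also use that $\filll_{\R^2}(A_0)$ is connected (noted above) and open (its complement $U_\infty(A_0)$, being a connected component of the closed set $\overline{\R^2}\smin A_0$, is closed). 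For the first identity the inclusion ``$\supseteq$'' is immediate: a loop $\gamma\ssq A_0$ satisfies $B(\gamma)=\filll_{\R^2}(\gamma)\ssq\filll_{\R^2}(A_0)$ by monotonicity. For ``$\subseteq$'', let $z\in\filll_{\R^2}(A_0)$; if $z\in A_0$ a sufficiently small circle about $z$ lies in $A_0$ and has $z$ in its Jordan disk, so the real content is the case $z\notin A_0$.

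For that case I would fix an exhaustion $A_0=\bigcup_{n\in\N}W_n$ by relatively compact open connected sets with $\overline{W_n}\ssq W_{n+1}$, together with $\eps>0$ such that $\overline{B_\eps(z)}\ssq\filll_{\R^2}(A_0)$, and then show that $\overline{B_\eps(z)}\ssq\filll_{\R^2}(\overline{W_n})$ for some $n$. If this failed, one could pick $z_n\in\overline{B_\eps(z)}\cap U_\infty(\overline{W_n})$ for every $n$; then $z_n$ lies in the connected component $E_n$ of $\infty$ in the compact set $\overline{\R^2}\smin W_n$, the $E_n$ form a nested sequence of continua, and $E_\infty=\bigcap_n E_n$ is a continuum containing $\infty$ and contained in $\bigcap_n(\overline{\R^2}\smin W_n)=\overline{\R^2}\smin A_0$, so $E_\infty\ssq U_\infty(A_0)$. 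A subsequence of $(z_n)$ converges to some $z_*\in\overline{B_\eps(z)}$, which then lies in every $E_m$ (each $E_m$ being closed and containing $z_n$ for all $n\geq m$), hence in $E_\infty\ssq U_\infty(A_0)$ --- contradicting $z_*\in\overline{B_\eps(z)}\ssq\filll_{\R^2}(A_0)$. Once $\overline{B_\eps(z)}\ssq\filll_{\R^2}(\overline{W_n})$ is known, I would use that $\overline{W_n}$ is a continuum with open neighbourhood $A_0$ and invoke the standard fact --- provable by a grid/polygon argument --- that a continuum inside an open set can be enclosed by a polygonal loop contained in that open set: this yields a loop $\gamma\ssq A_0$ with $\overline{W_n}$ in its Jordan disk $B(\gamma)$. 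Since $B(\gamma)=\filll_{\R^2}(\gamma)$ is filled-in and contains $\overline{W_n}$, monotonicity gives $\filll_{\R^2}(\overline{W_n})\ssq B(\gamma)$, and therefore $z\in B(\gamma)$ with $\gamma\ssq A_0$, as required.

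For the second statement the key observation is soft: a nonempty, bounded, filled-in set $C\ssq\R^2$ cannot be contained in a nontrivial translate $C+v$ of itself, for otherwise $C\ssq C+kv$ for all $k\geq1$, while $C\cap(C+kv)=\emptyset$ as soon as $k>\diam(C)/|v|$. Assuming now $A_0\cap(A_0+v)=\emptyset$ (and, harmlessly, $A_0\neq\emptyset$, $v\neq0$), set $B:=\filll_{\R^2}(A_0)$, which is nonempty and connected. Since $A_0+v$ is connected and disjoint from $A_0$, it lies in a single connected component of $\R^2\smin A_0$; the components of $\R^2\smin A_0$ are the bounded ones, each contained in $B$, and the unbounded ones, each disjoint from $B$ (indeed $\R^2\smin B=(\R^2\smin A_0)\smin\bigcup_\alpha A_\alpha$). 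If $A_0+v$ lay in a bounded component, then $A_0+v\ssq B$, forcing $A_0$ and hence $B$ to be bounded and, by monotonicity and idempotence, $B+v=\filll_{\R^2}(A_0+v)\ssq\filll_{\R^2}(B)=B$, contradicting the observation; hence $A_0+v$ lies in an unbounded component and $(A_0+v)\cap B=\emptyset$. Finally, if $B$ and $B+v$ met at some $w$, then $w\notin A_0+v$, so $w$ lies in a bounded component $A_\alpha+v$ of $\R^2\smin(A_0+v)$; as $B$ is connected, disjoint from $A_0+v$ and meets this component, $B\ssq A_\alpha+v\ssq B+v$ with $B$ bounded --- again impossible. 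Thus $B\cap(B+v)=\emptyset$.

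I expect the first identity to be the hard part, specifically the inclusion ``$\subseteq$'' when $z$ lies in a bounded complementary component of $A_0$: this is where the subtleties of surface topology enter, through the compactness argument with nested continua in $\overline{\R^2}$ and through the grid construction of an enclosing loop, both of which --- in keeping with the paper's stated intent --- should be carried out explicitly. The second statement, by contrast, reduces cleanly to the boundedness observation once one keeps track of which component of $\R^2\smin A_0$ the translate $A_0+v$ falls into.
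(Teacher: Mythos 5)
Your proof is correct, but it follows a genuinely different route from the paper's. For the identity, the paper argues top-down: it sets $\widetilde A_0$ equal to the union of the Jordan disks of loops in $A_0$, observes that this set is open and simply connected (invoking the Riemann Mapping Theorem to see it is a disk, hence filled-in), characterises $\filll_{\R^2}(A_0)$ as the smallest filled-in set containing $A_0$, and checks that any filled-in superset of $A_0$ must contain $\widetilde A_0$; the translation statement is then dismissed as ``a consequence of the first statement''. You instead work bottom-up: an exhaustion of $A_0$ by relatively compact connected open sets, a nested-continua compactness argument in $\overline{\R^2}$ showing that any $z\in\filll_{\R^2}(A_0)$ already lies in $\filll_{\R^2}(\overline{W_n})$ for some $n$, and then the classical grid fact that a continuum inside a planar open set can be surrounded by a polygonal Jordan curve inside that set. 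This avoids the Riemann Mapping Theorem and the somewhat terse ``smallest filled-in set'' minimality step, at the cost of importing the enclosing-polygon lemma (which you correctly flag as standard and sketch). For the second statement you do not deduce it from the loop characterisation at all, but give a self-contained argument from the paper's stated alternative description of $\filll_{\R^2}$ as $A_0$ together with the bounded complementary components, plus translation equivariance, idempotence, and the observation that a nonempty bounded set cannot be contained in a nontrivial translate of itself; this fills in exactly the step the paper leaves to the reader, and the case analysis (translate in a bounded versus unbounded complementary component, then the final component argument) is sound. Both of the external ingredients you rely on --- the exhaustion/enclosing-polygon facts and the bounded-components description of $\filll_{\R^2}$ --- are either standard or explicitly granted by the paper, so no gap results.
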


\begin{proof}
Let
\begin{equation}
\widetilde A_0 \ =\ \{z\in\R^2\mid \exists\textrm{a loop} ~ \gamma \ssq A_0: z\in B(\gamma)\} \ .
\end{equation}
Then $\widetilde A_0$ is simply-connected and therefore a topological
disk by the Riemann Mapping Theorem. In particular, it is filled-in.
Now, suppose $B$ is another filled-in set that contains $A_0$, but
does not contain $\widetilde A_0$. Then there is a loop in $A_0$ such
that its bounded component contains a point that is not in $B$.
However, this point cannot belong to the unbounded component of
$\R^2\smin A$, which is a contradiction. It follows that any filled-in
set that contains $A_0$ also contains $\widetilde A_0$, and therefore
$\widetilde A_0=\filll_{\R^2}(A_0)$. The second statement is a
consequence of the first statement.
\end{proof}

Our aim is now to define a similar $\filll$-operation for connected
subsets of the torus. This does not work for arbitrary connected
subsets of the torus (see the remarks at the end of this section), but
we show it does apply to subsets of $\torus$ which are either domains
or bounded continua. At the end of this section we collect several
basic results that will be used later in the proof of our main
results.

\subsection{Fill-in of domains in the torus}\label{filled-in}

We say that a domain $A \ssq\torus$ is {\em locally homotopically
  trivial} if every loop contained in $A$ which is homotopically
trivial in $\T^2$ is also homotopically trivial in $A$. Note that for
instance an essential annulus is locally homotopically trivial.

\begin{lema}\label{lem_simply_simply}
  If a domain $A \ssq\torus$ is locally homotopically trivial, then any connected
  component of $\pi^{-1}(A) \subset \R^2$ is simply connected.
\end{lema}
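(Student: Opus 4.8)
The plan is to argue by contradiction. Let $\widetilde{A}$ be a connected component of $\pi^{-1}(A)$ and suppose $\widetilde{A}$ is not simply connected. Since $\widetilde{A}\ssq\R^2$ is open and connected, being simply connected is equivalent (for planar domains) to having connected complement in $\overline{\R^2}$; so if $\widetilde{A}$ is not simply connected, then $\overline{\R^2}\smin\widetilde{A}$ has a bounded connected component $K$. Pick a point $w\in K$, and choose a loop $\gamma_0\ssq\widetilde{A}$ such that $w\in B(\gamma_0)$, i.e.\ $\gamma_0$ surrounds $w$ (such a loop exists because $\widetilde A$ separates $w$ from $\infty$; concretely, take a simple closed curve in $\widetilde A$ separating the bounded component $K$ from the unbounded one). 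Then $\gamma:=\pi(\gamma_0)$ is a loop contained in $A$, and it is homotopically trivial in $\T^2$ since $\gamma_0$ is a loop (not just a path) in $\R^2$ — its Jordan disk $B(\gamma_0)$ projects to the embedded disk $B(\gamma)$, witnessing triviality.

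Now the hypothesis that $A$ is locally homotopically trivial forces $\gamma$ to be homotopically trivial inside $A$. The key step is to transfer this back upstairs: a null-homotopy of $\gamma$ in $A$ lifts, under the covering map $\pi|_{\widetilde A}\colon\widetilde A\to A$ (restricted to the appropriate component, using that $\pi$ restricted to a component of the preimage is a covering of $A$), to a null-homotopy of $\gamma_0$ in $\widetilde A$. Hence $\gamma_0$ is contractible in $\widetilde A$, so the bounded Jordan disk $B(\gamma_0)$ — which $\gamma_0$ bounds and which contains $w$ — must be entirely contained in $\widetilde A$. (This is the standard fact that a simple loop null-homotopic in an open planar set bounds a disk inside that set; alternatively, invoke Lemma~\ref{l.fill-projection} applied to $\widetilde A$, since $\filll_{\R^2}(\widetilde A)=\widetilde A$ once $\widetilde A$ contains the Jordan disk of each of its loops, which is exactly what being filled-in means.) In particular $w\in\widetilde A$, contradicting $w\in K\ssq\overline{\R^2}\smin\widetilde A$.

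The main obstacle is the lifting step in the previous paragraph: one has to be careful that $\pi$ restricted to a connected component $\widetilde A$ of $\pi^{-1}(A)$ is indeed a covering map onto $A$ (true since $\pi\colon\R^2\to\T^2$ is a covering and restrictions to components of preimages of a set are coverings onto that set), and that a based null-homotopy in $A$ therefore lifts to a based homotopy in $\widetilde A$ with the correct endpoints, yielding contractibility of the chosen lift $\gamma_0$ rather than of some translate $\gamma_0+v$. Choosing $\gamma_0$ to be precisely the lift of $\gamma$ that is a loop (which exists because $\gamma$ is homotopically trivial in $\T^2$, so all its lifts are loops) takes care of the base-point bookkeeping. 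The remaining ingredients — existence of a separating loop $\gamma_0\ssq\widetilde A$ around a prescribed point of a bounded complementary component, and the Jordan-curve/Riemann-mapping facts about planar domains — are exactly of the type already used in the proof of Lemma~\ref{l.fill-projection} and can be cited or reproduced verbatim.
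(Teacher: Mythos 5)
Your route is genuinely different from the paper's (contradiction, plus lifting a null-homotopy through the covering $\pi|_{\widetilde A}\colon \widetilde A\to A$, plus a winding-number argument), and most of its individual steps are sound: the restriction of $\pi$ to a connected component of $\pi^{-1}(A)$ is indeed a covering of $A$, null-homotopies lift with a prescribed initial lift, and a simple closed curve which is null-homotopic in an open planar set bounds its Jordan disk inside that set. The existence of a simple closed curve in $\widetilde A$ surrounding a prescribed point $w$ of a bounded complementary component is also classical, though it deserves a reference or a grid-type argument; note that your parenthetical alternative via ``$\filll_{\R^2}(\widetilde A)=\widetilde A$'' is circular (for an open connected planar set, being filled-in \emph{is} simple connectedness, which is what you are trying to prove), so the winding-number fact is the justification you actually need there.

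The genuine gap is the step ``the hypothesis that $A$ is locally homotopically trivial forces $\gamma$ to be homotopically trivial inside $A$''. In this paper ``loop'' means \emph{simple closed curve} (see the notation section, ``loops (i.e.\ simple closed curves)''; likewise $B(\gamma)$ is only defined for simple trivial loops), so the hypothesis is only available for embedded circles in $A$. Your curve $\gamma=\pi(\gamma_0)$ need not be embedded: $\gamma_0$ is simple in $\R^2$, but it may intersect its integer translates (nothing in your construction prevents this, e.g.\ when $\widetilde A+v=\widetilde A$ for some $v\neq 0$ and the curve surrounding $w$ is large), and then $\gamma$ has self-intersections. Bridging exactly this point is the content of the paper's proof: one perturbs $\gamma_0$ so that $\gamma$ has only finitely many self-intersections, decomposes $\overline{B(\gamma_0)}$ into finitely many Jordan disks $J_i$ cut out by $\pi^{-1}(\gamma)$, checks that each $J_i$ injects into \torus, and applies the triviality hypothesis only to the embedded trivial loops $\pi(\partial J_i)$, obtaining $J_i\ssq \widetilde A$ piece by piece. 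If one reads the definition as applying to arbitrary (not necessarily simple) null-homotopic closed curves in $A$, your argument closes up and is shorter than the paper's; but that stronger hypothesis is essentially equivalent to what the lemma (together with covering theory) is meant to establish, so invoking it here begs the question. To complete your proof you must either reduce to simple projected loops as the paper does, or prove separately that triviality in $A$ of all simple trivial loops implies triviality in $A$ of all trivial closed curves.
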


\begin{proof}
  Let $A_0 \subset \R^2$ be a connected component of $\pi^{-1}(A)$ and
  let $\gamma_0 \subset A_0$ be a simple closed curve. We have to show
  that $A_0$ contains $B(\gamma_0)$. Since $A_0$ is open, by
  approximating $\gamma_0$ by an analytic curve homotopic to
  $\gamma_0$, we may as well assume that that $\gamma :=
  \pi(\gamma_0)$ has finitely many self-intersections. Consequently,
  since $\gamma_0$ is compact, only finitely many other integer
  translates of $\gamma_0$ intersect $\gamma_0$, and the number of
  intersection points of $\gamma_0$ with the integer translates of
  $\gamma_0$ is finite.  Therefore, the intersection pattern produces
  a finite number of Jordan disks $J_1\ld J_n$ such that the
  boundary of $J_i$ is contained in $\pi^{-1}(\gamma)$, $J_i\cap
  \pi^{-1}(\gamma)=\emptyset$ and $\overline{B(\gamma_0)} =
  \bigcup_{i=1}^n \overline{J_i}$. Further, each of the disks $J_i$
  embeds injectively in $\torus$, since otherwise we would have an
  intersection $J_i\cap(J_i+v)$ for some $i\in\{1\ld n\}$, $v\in\Z^2$,
  contradicting the definition of the $J_i$. Since $A$ is simply
  connected and $\partial \pi(J_i)\ssq A$, we obtain $B(\pi(J_i))\ssq
  A$. However, this implies $J_i\ssq A_0$ for all $i=1\ld n$ and
  therefore $B(\gamma_0)\ssq A_0$.
\end{proof}

Locally homotopically trivial domains in the torus in the above sense can be classified as follows.

\begin{lema}[\cite{F}, Lemma 7]\label{Fe}
  Let $A\subset \T^2$ be open, trivial (respectively essential, doubly
  essential) and simply connected. Then $A$ is a disk (respectively
  essential annulus, $\T^2$).
\end{lema}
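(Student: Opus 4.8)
The plan is to lift everything to the universal cover $\pi\colon\R^2\to\T^2$ and to reduce each of the three cases to the classification of surfaces of small complexity. Fix a connected component $\widetilde A$ of $\pi^{-1}(A)$; by hypothesis $\widetilde A$ is simply connected (see Lemma~\ref{lem_simply_simply}), and we let $G\ssq\Z^2$ be its stabiliser under the deck group. By the definitions recalled in Subsection~\ref{Basics}, $G=\{0\}$ if $A$ is trivial, $G=\Z\cdot(p,q)$ for a primitive vector $(p,q)$ if $A$ is essential, and $G=\Z^2$ if $A$ is doubly essential. First I would record two elementary facts. (i) The restriction $\pi|_{\widetilde A}\colon\widetilde A\to A$ is again a covering map and is onto: it is onto because, $A$ being connected, lifting a path in $A$ that begins over a point of $\widetilde A$ keeps the whole lift inside the single component $\widetilde A$; since $\widetilde A$ is simply connected this is the universal covering of $A$, with deck group $G$, so $\pi_1(A)\cong G$ and $A\cong\widetilde A/G$. (ii) Being a connected, simply connected, open subset of $\R^2$, the set $\widetilde A$ is either all of $\R^2$ or homeomorphic to the open disk $\D$ (Riemann Mapping Theorem); and $\widetilde A=\R^2$ forces $G=\Z^2$.

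Granting these, the three cases are short. If $A$ is trivial, then $G=\{0\}$, so $\pi|_{\widetilde A}$ is a homeomorphism onto $A$; since $G\neq\Z^2$ we have $\widetilde A\neq\R^2$, hence $\widetilde A\cong\D$ and $A$ is a topological disk. If $A$ is essential, then composing with the automorphism of $\T^2$ induced by a matrix in $\mathrm{GL}(2,\Z)$ sending $(p,q)$ to $(1,0)$, we may assume $\widetilde A$ is invariant under the translation $T\colon(x,y)\mapsto(x+1,y)$ and $A\cong\widetilde A/\langle T\rangle$. Then $\widetilde A\cong\D$ carries the free, properly discontinuous, orientation-preserving $\Z$-action generated by $T$, so $A$ is an orientable surface without boundary with $\pi_1(A)\cong\Z$; by the classification of surfaces $A$ is homeomorphic to $\A=\kreis\times\R$, and it is essential because any arc in $\widetilde A$ joining a point to its $T$-image projects to a loop of nonzero homotopy type in $A$. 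Undoing the coordinate change, $A$ is an essential annulus. If $A$ is doubly essential, then $G=\Z^2$, so $\widetilde A$ is $\Z^2$-invariant, whence $\widetilde A=\pi^{-1}(A)$ and $\pi_1(A)\cong\Z^2$; were $A$ a proper open subset of the compact connected surface $\T^2$, it would be non-compact, and non-compact surfaces have free fundamental group, which $\Z^2$ is not. Hence $A=\T^2$.

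The only step that uses more than routine covering-space theory is the identification of the quotient in the essential case: a free, properly discontinuous, orientation-preserving $\Z$-action on an open disk has quotient homeomorphic to $\A$. I would quote this as classical — it is immediate from the classification of non-compact orientable surfaces, an orientable surface with infinite cyclic fundamental group being an annulus — but if one prefers to avoid that theorem, one can argue by hand: pick an arc $\alpha$ in $\widetilde A$ from a point $z$ to $Tz$; the set $\bigcup_{n\in\Z}T^n\alpha$ projects to a properly embedded essential circle in $A$, and cutting $A$ along it produces two half-open annuli which reassemble to give $A\cong\A$. The companion facts invoked in the other cases — that a simply connected non-compact surface is $\R^2$, and that a non-compact surface has free fundamental group — are equally standard, so the remaining work is purely formal.
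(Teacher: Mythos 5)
Your argument is correct, but note that this paper contains no proof of the lemma to compare against: it is imported verbatim from \cite{F} (Lemma 7), so the only meaningful review is of your proof on its own terms, and on those terms it stands. Two remarks on reading and rigour. First, you read the hypothesis correctly: ``simply connected'' here must be understood as in Lemma~\ref{lem_simply_simply} (every loop in $A$ which is trivial in $\T^2$ is trivial in $A$, equivalently each component $\widetilde A$ of $\pi^{-1}(A)$ is simply connected); with the literal meaning $\pi_1(A)=0$ the essential and doubly essential cases would be vacuous. Second, the one identification you use without comment --- that the stabiliser of $\widetilde A$ in the deck group equals the subgroup $G\subset\pi_1(\T^2)$ generated by classes of loops in $A$ --- deserves a line (lift a loop of $A$ starting in $\widetilde A$ to see $G$ lies in the stabiliser, and join $\tilde a$ to $\tilde a+v$ inside $\widetilde A$ for the converse); this is exactly the paper's remark that an essential $U$ is $(p,q)$-essential iff $\widetilde U+(p,q)=\widetilde U$. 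As for the only step with genuine content, the essential case, quoting the classification of non-compact orientable surfaces is legitimate, but you can avoid it altogether: after the coordinate change, $T|_{\widetilde A}$ is the restriction of the translation $z\mapsto z+1$, hence a \emph{conformal} automorphism of the planar domain $\widetilde A\cong\D$, so $A\cong\widetilde A/\langle T\rangle$ is a Riemann surface with infinite cyclic fundamental group, and uniformization identifies it with an annulus, a punctured disk or $\C\setminus\{0\}$, all homeomorphic to $\kreis\times\R$. If you prefer your hand-made cutting argument, choose the arc $\alpha$ embedded and meeting its $T$-translates only at its endpoints (discard superfluous intersections first); otherwise $\bigcup_{n\in\Z}T^n\alpha$ need not project to an embedded essential circle. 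The trivial and doubly essential cases are complete as written (the fibre of $\pi|_{\widetilde A}$ over $a$ is $\tilde a+G$, so $G=\{0\}$ gives injectivity; and a proper open subset of $\T^2$ is a non-compact surface, whose fundamental group is free, excluding $\Z^2$).
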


Given a domain $A\ssq \torus$ and a connected component $A_0$ of its lift, define
\begin{equation}
\filll(A) \ := \ \pi(\filll_{\R^2}(A_0)).
\end{equation}
Note that this definition does not depend on the choice of connected
component of $\pi^{-1}(A)$. Let us collect basic properties of the
$\filll$-operation for domains.

\begin{prop}[Fill-in of torus domains] \label{p.fill-properties}
Suppose $A\ssq\torus$ is a domain. Then the following hold.
\alphlist
\item[$(a)$] $\filll(A) = \{ z\in\torus\mid \exists\textrm{a trivial
    loop } \gamma\ssq A: z\in B(\gamma)\}$.
\item[$(b)$] $A$ is trivial and bounded iff $\filll(A)$ is a bounded disk.
\item[$(c)$] $A$ is trivial and unbounded iff $\filll(A)$ is an unbounded disk.
\item[$(d)$] $A$ is $(p,q)$-essential iff $\filll(A)$ is a $(p,q)$-annulus.
\item[$(e)$] $A$ is doubly essential iff $\filll(A)=\torus$.
\item[$(f)$] $\partial\filll(A)\ssq \partial A$.
\item[$(g)$] If $f\in\homeo(\torus)$, then $f(\filll(A)) = \filll(f(A))$.
\listend
\end{prop}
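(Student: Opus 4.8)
The statement to prove is Proposition~\ref{p.fill-properties}, which collects seven basic properties $(a)$–$(g)$ of the fill-in operation for a torus domain $A$. My plan is to prove them essentially in order, leaning on Lemma~\ref{l.fill-projection} (the planar characterisation), Lemma~\ref{lem_simply_simply}, and Lemma~\ref{Fe}. First, for $(a)$: fix a connected component $A_0$ of $\pi^{-1}(A)$. By Lemma~\ref{l.fill-projection}, $\filll_{\R^2}(A_0)$ is exactly the union of Jordan disks $B(\gamma_0)$ over loops $\gamma_0\ssq A_0$. Projecting down and using that $\pi$ restricted to a Jordan disk $B(\gamma_0)$ of a \emph{trivial} loop $\gamma_0$ is injective (so that $\pi(B(\gamma_0))=B(\pi(\gamma_0))$ in the notation of Section~\ref{Basics}), one direction is immediate. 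For the reverse inclusion, given a trivial loop $\gamma\ssq A$ with $z\in B(\gamma)$, one lifts $\gamma$ to a loop $\gamma_0\ssq\pi^{-1}(A)$; since $\pi^{-1}(A)$ has as components the integer translates of $A_0$, this loop lies in a single translate $A_0+v$, and after translating we get $\gamma_0\ssq A_0$ with a lift of $z$ in $B(\gamma_0)$, hence $z\in\pi(\filll_{\R^2}(A_0))=\filll(A)$. This also shows the definition of $\filll(A)$ is independent of the chosen component, as remarked before the statement.

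**Classification of the fill-in, parts $(b)$–$(e)$.** The key observation is that $\filll_{\R^2}(A_0)$ is open, connected and simply connected (it is a topological disk by the Riemann Mapping Theorem, as in the proof of Lemma~\ref{l.fill-projection}), and that $\filll(A)=\pi(\filll_{\R^2}(A_0))$ inherits the homotopy type of $A$. Concretely, $A$ is $(p,q)$-essential iff every component $A_0$ of its lift satisfies $A_0+(p,q)=A_0$ (as recalled in Section~\ref{Basics}); using part $(a)$ together with the second statement of Lemma~\ref{l.fill-projection} one checks that $\filll_{\R^2}(A_0)$ satisfies exactly the same translation invariance — if $A_0+(p,q)=A_0$ then $\filll_{\R^2}(A_0)+(p,q)=\filll_{\R^2}(A_0)$, and conversely the second part of Lemma~\ref{l.fill-projection} (contrapositive) shows that disjointness of translates is preserved, handling the trivial and, after passing to a sublattice, the essential cases. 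So $\filll(A)=\pi(\filll_{\R^2}(A_0))$ is an open, simply connected set (here ``simply connected'' means its lift components are simply connected, which holds since $\filll_{\R^2}(A_0)$ is a disk) whose subgroup $G$ of trivial-in-$\T^2$ loop classes equals that of $A$. Lemma~\ref{Fe} then identifies $\filll(A)$ as a bounded/unbounded disk, a $(p,q)$-annulus, or all of $\torus$ according to the three cases, giving $(b)$, $(c)$, $(d)$, $(e)$; boundedness in $(b)$ versus $(c)$ is read off from whether the components of the lift of $\filll(A)$ are bounded, which by part $(a)$ is equivalent to boundedness of $A_0$ (a bounded set has bounded fill-in and vice versa in $\R^2$). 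For $(e)$, doubly essential means $G\cong\Z^2$, and the only open simply connected set with this property is $\torus$ by Lemma~\ref{Fe}.

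**Parts $(f)$ and $(g)$.** For $(f)$: a boundary point $z\in\partial\filll(A)$ lifts to $z_0\in\partial\filll_{\R^2}(A_0)$. Since $\filll_{\R^2}(A_0)=A_0\cup\bigcup_\alpha A_\alpha$ is the union of $A_0$ with the bounded components of its complement, its topological boundary is contained in $\partial A_0$ (a point on the boundary of $\filll_{\R^2}(A_0)$ cannot be interior to any $A_\alpha$ nor to $A_0$, and limits of points of $\filll_{\R^2}(A_0)$ that are not in $A_0$ must accumulate on $A_0$); projecting gives $z\in\partial A$. For $(g)$: $f$ lifts to a homeomorphism $F$ of $\R^2$ with $F(A_0)$ a component of $\pi^{-1}(f(A))$, and a homeomorphism of the plane takes the unbounded complementary component $U_\infty$ to $U_\infty$, hence commutes with $\filll_{\R^2}$; projecting down and using $(a)$ (together with $f(B(\gamma))=B(f(\gamma))$ for trivial loops, since $f$ is homotopic-free to carry a trivial loop to a trivial loop) yields $f(\filll(A))=\filll(f(A))$.

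**Main obstacle.** The delicate point is not any single item but the bookkeeping linking the homotopy type of $A$ to that of its fill-in in cases $(c)$ and $(d)$ — in particular verifying that $\filll_{\R^2}(A_0)$ is \emph{exactly} as translation-invariant as $A_0$ (no more, no less), so that no ``extra'' essentiality is created by filling in and Lemma~\ref{Fe} can be applied with the correct subgroup $G$. The ``no less'' direction is clear since $A\ssq\filll(A)$; the ``no more'' direction is where the second statement of Lemma~\ref{l.fill-projection} does the real work, ruling out that filling in a trivial or once-essential domain accidentally makes translates overlap. I would isolate this as the single careful paragraph and treat the remaining verifications as routine consequences of Lemmas~\ref{l.fill-projection}, \ref{lem_simply_simply} and \ref{Fe}.
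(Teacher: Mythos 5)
Your items (b)--(e) and (g) follow essentially the paper's route (the second part of Lemma~\ref{l.fill-projection} to control translates, Lemma~\ref{Fe} to identify the fill-in, and commutation of a lift with $\filll_{\R^2}$), but there is a genuine gap in your proof of (f). You begin by asserting that a point $z\in\partial\filll(A)$ lifts to a point of $\partial\filll_{\R^2}(A_0)$, and this is precisely the delicate step: $\pi^{-1}(\filll(A))$ is the union of \emph{all} integer translates of $\filll_{\R^2}(A_0)$, and a point can lie on the boundary of this union without lying in the closure of any single translate, because infinitely many distinct translates may accumulate on it (the same phenomenon as in Remark~\ref{r.boundedness}(2): components of $\pi^{-1}(A)$ need not be uniformly separated). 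Concretely, let $A_0$ be a thin open tube around the polygonal ray through the points $(n,1/n)$, $n\geq 1$, with widths shrinking so fast that all integer translates of $A_0$ are pairwise disjoint and no integer point lies in $\cl[A_0]$; then $A=\pi(A_0)$ is a domain with $\filll(A)=A$, the point $\pi(0,0)$ belongs to $\partial\filll(A)$, yet none of its lifts belongs to $\partial\filll_{\R^2}(A_0)$. So the reduction to the planar inclusion $\partial\filll_{\R^2}(A_0)\ssq\partial A_0$ does not go through as written. The paper instead argues by contradiction: if $z\in\partial\filll(A)\smin\partial A$, then $z\notin\cl[A]$, and one considers the connected component $C$ of a lift of $z$ in $\R^2\smin\pi^{-1}(\cl[A])$, showing that $C$ is either contained in a single integer translate of $\filll_{\R^2}(A_0)$ or disjoint from $\pi^{-1}(\filll(A))$; in both cases $z$ is an interior point of $\filll(A)$ or of its complement, a contradiction. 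You need an argument of this type to handle accumulation of translates.

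There is also a smaller gap in (a). The direction $\filll(A)\ssq\{z\mid\exists\ \textrm{trivial loop}\ \gamma\ssq A,\ z\in B(\gamma)\}$ is not immediate: Lemma~\ref{l.fill-projection} produces a planar loop $\gamma_0\ssq A_0$ with a lift of $z$ in $B(\gamma_0)$, but when $A$ is essential or doubly essential the component $A_0$ is invariant under some integer translations, $\gamma_0$ may meet its own translates, and then $\pi(\gamma_0)$ is not a simple loop and $\pi$ is not injective on $B(\gamma_0)$. Your injectivity claim is valid only for lifts of loops that are already simple in the torus, which is what you need for the easy (reverse) inclusion, not for this one. To produce an actual trivial loop of $A$ whose Jordan disk contains $z$, one has to decompose $B(\gamma_0)$ into injectively embedded Jordan disks as in the proof of Lemma~\ref{lem_simply_simply}; this is exactly why the paper derives (a) from Lemma~\ref{l.fill-projection} combined with Lemma~\ref{lem_simply_simply}, whereas you cite that lemma in your plan but never deploy it at the point where it is needed.
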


\begin{proof}
  The property $(a)$ is a direct consequence of
  Lemma~\ref{l.fill-projection} combined with
  Lemma~\ref{lem_simply_simply} applied to a connected component $A_0$
  of $\pi^{-1}(A)$. As a consequence, we obtain that $\filll(A)$ is
  simply-connected in $\torus$, such that according to Lemma~\ref{Fe}
  it is either a disk, an essential annulus or a doubly essential set.
  The properties $(b)$--$(e)$ therefore follow from the second part of
  Lemma~\ref{l.fill-projection}. 

  To prove $(f)$, assume for a contradiction that
  $z\in\partial\filll(A)$ but $z\notin\partial A$. Then, since
  $A\ssq\filll(A)$ is open, we have $z\notin \cl[A]$. Let $z_0$ be a
  lift of $z$ and denote by $C$ the connected component of $z_0$ in
  $\R^2\smin\pi^{-1}(\cl[A])$. Then either $C$ is contained in some
  integer translate of $\filll(A_0)$, but then
  $z\ssq\pi(C)\ssq\inte(\filll(A))$, or $C$ is disjoint from
  $\pi^{-1}(\filll(A))$, but then
  $z\in\pi(C)\ssq\inte(\torus\smin\filll(A))$. Hence, in both cases we arrive
  at a contradiction.
 
  Finally, in order to show $(g)$ let $F \colon \R^2\to\R^2$ be a lift of
$f\in\homeo(\torus)$. We then have
\begin{equation}
  f(\filll(A))=f\circ\pi(\filll_{\R^2}(A_0))=\pi\circ F(\filll_{\R^2}(A_0))
   \stackrel{(a)}{=}\pi(\filll_{\R^2}(F(A_0))) = \filll(A).
\end{equation}
This finishes the proof.
\end{proof}

\subsection{Fill-in of continua in the torus} \label{Doublyessential}

We now proceed to construct the fill-in of bounded continua in the
torus. As the following remark shows, some subtleties have to be
addressed and the construction does not work for general subsets of \torus. 

\begin{obs} \label{r.boundedness}

\indent (1) If $A \subset \T^2$ is a continuum, but not bounded, it
  is not clear how to define a fill-in. For example, consider the
  disjoint union of two essential loops $\gamma_1,\gamma_2\ssq\torus$
  with an infinite embedded line $\gamma\ssq\torus$ that accumulates
  on $\gamma_1$ in one and on $\gamma_2$ in the other direction.  Then
  the complement of any lift of $\gamma_1\cup \gamma_2\cup \gamma$ to
  $\R^2$ will have three connected components, and
  $\filll(\gamma_1\cup\gamma_2\cup\gamma)$ will depend on the
  particular choice and position of these components in the plane.

  \indent (2) A second problem comes from the fact that even if a
  connected subset of $A\ssq\torus$ is bounded in the sense of
  Section~\ref{Basics}, there is not necessarily a uniform bound on
  the diameter of the connected components of $\pi^{-1}(A)$.  Indeed,
  consider an irrational foliation of the torus given by the orbits of
  a Kronecker flow.  For each $n\in\N$ let $A_n$ be a segment of
  length $n$ in one of the leaves of this foliation, chosen such that
  no two segments are in the same leave.  Then for any $\eps>0$ there
  exists $n\in\N$ such that $A_n$ is $\eps$-dense. This implies that
  $A=\ncup A_n$ is connected: Suppose $\{U,V\}$ is an open disjoint
  cover of $A$. Then for $\eps>0$ sufficiently small both $U$ and $V$
  contain a disk of radius $\eps$, and consequently $A_n$ intersects
  both $U$ and $V$ when $n$ is large. Since $A_n\ssq A\ssq U\cup V$,
  this contradicts the connectedness of $A_n$. The connected
  components of $\pi^{-1}(A)$ are exactly the connected components of
  the lifts of the segments $A_n$. Hence, $A$ is bounded according to
  the above definition, but the connected components of $\pi^{-1}(A)$
  are not uniformly bounded in diameter.
 \end{obs}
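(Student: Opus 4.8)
The plan is to establish the two cautionary assertions by exhibiting the indicated examples and verifying the relevant topological facts.

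For assertion~(1) I would realise the configuration concretely. Take two disjoint essential loops of the same homotopy type, say $\gamma_1=\pi(\R\times\{0\})$ and $\gamma_2=\pi(\R\times\{1/2\})$, which together bound the open essential annulus $N=\pi(\R\times(0,1/2))$, and inside $N$ the embedded line $\gamma=\{\pi(t,\varphi(t)):t\in\R\}$, where $\varphi\colon\R\to(0,1/2)$ is an increasing homeomorphism; strict monotonicity of $\varphi$ makes $t\mapsto\pi(t,\varphi(t))$ injective, so $\gamma$ is a topological line, and since $\gamma$ winds around $N$ infinitely in both directions while $\varphi(t)\to 0$ (resp.\ $\to 1/2$) as $t\to-\infty$ (resp.\ $+\infty$), one end of $\gamma$ accumulates on all of $\gamma_1$ and the other on all of $\gamma_2$. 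Hence $A:=\overline{\gamma}=\gamma\cup\gamma_1\cup\gamma_2$ is a continuum which is not bounded. The point to make precise is then that $\pi^{-1}(A)$ is not permuted transitively by the deck group: its components form exactly three $\Z^2$-orbits --- the horizontal lines $\R\times\Z$ lifting $\gamma_1$, the horizontal lines $\R\times(\Z+1/2)$ lifting $\gamma_2$, and a single $\Z^2$-orbit of properly embedded winding curves lifting $\gamma$ --- and each of these components is a line in $\R^2$ which does not separate $\overline{\R^2}$, so that $U_\infty$ of it equals its entire complement and the planar recipe $\R^2\smin U_\infty(\cdot)$ returns the component unchanged. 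Running this recipe on a component and projecting therefore yields $\filll(A)\in\{\gamma_1,\gamma_2,\gamma\}$, the outcome depending on which component was selected; I would then record that these three sets are genuinely different, and that none of them is the ``filled-in'' set one would want, which is exactly the sense in which there is no well-defined fill-in of $A$.

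For assertion~(2) I would use the Kronecker example. Fix an irrational slope, let $\mathcal{F}$ be the corresponding linear foliation of $\torus$, and for each $n\in\N$ choose a leaf segment $A_n$ of Euclidean length $n$, the segments lying on pairwise distinct leaves; put $A=\bigcup_{n\in\N}A_n$. First, $A$ is bounded in the sense of Section~\ref{Basics}: a leaf lifts to a straight line of the given irrational slope, so a leaf segment of finite length lifts to a disjoint union of \emph{bounded} arcs, and since the $A_n$ lie on distinct leaves their lifts are disjoint; hence every connected component of $\pi^{-1}(A)$ is one of these bounded arcs. Second, $A$ is connected: a leaf segment of length $n$ is $\eps_n$-dense in $\torus$ for some $\eps_n$ with $\eps_n\to 0$, so if $\{U,V\}$ were a disjoint open cover of $A$ then, for $\eps>0$ small enough, both $U$ and $V$ would contain a ball of radius $\eps$, whence a sufficiently long $A_n$ would meet both, contradicting the connectedness of $A_n$. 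Finally, the components of $\pi^{-1}(A)$ are exactly the lifted arcs of the individual $A_n$, and the lift of $A_n$ has diameter at least $n$; so there is no uniform bound on the diameters of the components of $\pi^{-1}(A)$, even though $A$ is bounded.

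The main obstacle is assertion~(1): ``it is not clear how to define a fill-in'' is not itself a formal statement, so the real work is to pin it down --- to exhibit, as above, a single continuum $A$ on which the natural planar recipe depends on the (arbitrary) choice of component of $\pi^{-1}(A)$ and returns mutually inconsistent, degenerate answers, or equivalently to exhibit two independently reasonable candidate definitions of a torus fill-in which disagree on this $A$. Assertion~(2) is then routine once the density-based connectedness argument is granted; its only subtlety is the counterintuitive coexistence of boundedness of $A$ with the failure of any uniform diameter bound on the components of $\pi^{-1}(A)$, which is precisely the phenomenon the example is constructed to display.
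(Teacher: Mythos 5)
Your proposal is correct and follows essentially the same route as the remark itself: for (1) you concretise the paper's spiral example and observe that the components of $\pi^{-1}(A)$ fall into three deck-orbit classes, so the recipe ``fill a chosen component in $\R^2$ and project'' gives the inconsistent answers $\gamma_1$, $\gamma_2$ or $\gamma$ (a precise rendering of the paper's loosely phrased ``three connected components''), and for (2) you reproduce the paper's Kronecker-segment construction with the same $\eps$-density argument for connectedness of $A=\bigcup_n A_n$ and the same identification of the preimage components with the lifted segments of length $n$. No gaps beyond the level of informality already present in the paper's own remark.
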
 

 In order to avoid problems, we restrict to bounded continua and
 first show that their complement is always doubly essential. This
 implies immediately that for any bounded continuum $A\ssq\torus$ there is
 only one connected component of $\pi^{-1}(A)$ up to translation by
 integer vectors. As a consequence, we will be able to define the
 fill-in in the same way as for domains.\medskip

In what follows, for a given family of pairwise disjoint sets
$\{X_n\}_{n\in\N}$ either in the plane or the torus, we denote its
union by $\biguplus_{n\in\N}X_n$.

\begin{lema}\label{pc0}
  Let $\{D_n\}_{n\in\N}$ be a family of pairwise disjoint bounded open
  disks in the plane. Then, $A=\R^2\smin\biguplus_{n\in\N}D_n$ is a
  connected set.
\end{lema}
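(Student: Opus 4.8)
The plan is to lift the problem to the Riemann sphere $\overline{\R^2}=\R^2\cup\{\infty\}$, where compactness makes connectedness arguments robust, prove a spherical version there, and then transfer back to the plane.

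\emph{Step 1: the spherical statement.} I would first show that if $W\subsetneq\overline{\R^2}$ is open and every connected component of $W$ is simply connected, then $\overline{\R^2}\setminus W$ is a continuum. The building block is the classical fact that a simply connected open set $U\subsetneq\overline{\R^2}$ has connected complement: by the Riemann Mapping Theorem one exhausts $U$ by an increasing sequence of Jordan domains $J_k$ with $\overline{J_k}\subseteq U$, so that $\overline{\R^2}\setminus U=\bigcap_k(\overline{\R^2}\setminus J_k)$ is a nested intersection of closed Jordan domains, hence a continuum by the Jordan Curve Theorem. If $W$ has finitely many components $W_1,\dots,W_N$, one argues by induction on $N$: since $W_{N+1}$ is disjoint from $W_1\cup\dots\cup W_N$, one has $\overline{\R^2}=(\overline{\R^2}\setminus W_{N+1})\cup(\overline{\R^2}\setminus(W_1\cup\dots\cup W_N))$, a union of two closed connected subsets, so the unicoherence of $S^2$ (the fact that $S^2=X\cup Y$ with $X,Y$ closed and connected forces $X\cap Y$ to be connected) shows that their intersection $\overline{\R^2}\setminus(W_1\cup\dots\cup W_{N+1})$ is connected, and it is compact. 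Finally, for countably many components, $\overline{\R^2}\setminus W=\bigcap_N(\overline{\R^2}\setminus(W_1\cup\dots\cup W_N))$ is a nested intersection of non-empty continua in the compact space $\overline{\R^2}$, hence a continuum. Applying this with $W=\biguplus_n D_n$ — which is $\neq\overline{\R^2}$ because each $D_n$ is bounded, and has simply connected components because each $D_n$ is a topological disk — shows that $\widehat A:=\overline{\R^2}\setminus\biguplus_n D_n$ is a continuum.

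\emph{Step 2: descending to the plane.} It remains to show that $A=\widehat A\setminus\{\infty\}=\R^2\setminus\biguplus_n D_n$ is connected, i.e.\ that $\infty$ is not a cut point of the continuum $\widehat A$. Suppose for contradiction that $A=F_1\uplus F_2$ with $F_1,F_2$ non-empty, closed and disjoint. One checks first that $A$ is unbounded (otherwise $\biguplus_n D_n$ would contain a connected set $\{\,|z|\ge R\,\}$, hence a single $D_n$ would be unbounded), and then that \emph{both} $F_i$ are unbounded: if, say, $F_2$ were bounded it would be compact, and its complement in $\widehat A$, namely $F_1\cup\{\infty\}=\overline{F_1}^{\,\overline{\R^2}}$, would be closed, so $F_2$ would be a proper non-trivial clopen subset of $\widehat A$. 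Now, since $\partial D_n\subseteq A=F_1\cup F_2$ for every $n$, each disk is \emph{attached to side $1$} ($\overline{D_n}\cap F_2=\emptyset$), \emph{attached to side $2$}, or \emph{bridging} (its frontier meets both $F_1$ and $F_2$). If there were no bridging disk, then $F_1\cup\biguplus\{\text{side-$1$ disks}\}$ and $F_2\cup\biguplus\{\text{side-$2$ disks}\}$ would be disjoint, non-empty and open and would cover $\R^2$, contradicting its connectedness; hence a bridging disk exists. One then rules out bridging disks by invoking the spherical statement of Step 1 (equivalently, the connectedness of each $\overline{\R^2}\setminus D_n$): removing bridging disks and exhausting by bounded sub-regions localises the would-be disconnection away from $\infty$, where $F_1$ and $F_2$ unavoidably accumulate, reducing matters to Step 1.

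\emph{Main obstacle.} The serious point is Step 2. On the sphere, connectedness is stable under nested intersections and unicoherence does all the work, so Step 1 is essentially routine. In the plane, however, the single point $\infty$ lies in the closure of every piece of any putative disconnection, so it cannot be separated off by the (bounded) disks $D_n$; showing that it is not a cut point of $\widehat A$ is where the argument needs care, and where the global simple-connectivity of the complement must genuinely be used rather than only the local "side $1$/side $2$" bookkeeping.
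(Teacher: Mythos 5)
Your Step 1 is sound (the exhaustion argument, unicoherence of $\mathbb{S}^2$ for finitely many components, and the nested intersection of continua are all standard and correctly assembled), but Step 2 — which you yourself identify as the serious point — is not actually completed. After establishing that a bridging disk must exist, you still owe a contradiction, and the closing sentence about ``removing bridging disks and exhausting by bounded sub-regions'' is a gesture, not an argument: it is not said what region is exhausted, to which set Step 1 is applied, or why the putative disconnection gets ``localised away from $\infty$'' when, as you showed, both $F_1$ and $F_2$ are unbounded and therefore accumulate at $\infty$ along any exhaustion. (A smaller omission: the claim that, absent bridging disks, $F_1\cup\biguplus\{\text{side-1 disks}\}$ is open also needs a short argument — a ball around $x\in F_1$ disjoint from $F_2$ cannot meet a side-2 disk, since the ball is connected, misses that disk's frontier, and contains the point $x\notin D_m$.) So as written the proof has a genuine gap exactly at its decisive step.

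The gap is easy to close with a tool you already invoke: by your Step 1, $\overline{\R^2}\smin D_n$ is a continuum, and since $\overline{D_n}$ is a continuum as well and these two closed connected sets cover the sphere, unicoherence of $\mathbb{S}^2$ gives that their intersection $\partial D_n$ is connected. A connected set contained in the union of the disjoint closed sets $F_1$ and $F_2$ lies entirely in one of them, so bridging disks are impossible outright; combined with the openness of the two sets $F_i\cup\biguplus\{\text{side-}i\text{ disks}\}$ this contradicts the connectedness of $\R^2$ and finishes the proof — and it also shows that the compactification, the nested-limit machinery and the unboundedness discussion are not really needed. For comparison, the paper's proof is much more direct: given $z_0,z_1\in A$ it takes the straight segment $S$ between them and shows that $(S\cap A)\cup\bigcup\{\partial D_n : D_n\cap S\neq\emptyset\}$ is a connected subset of $A$ containing both points; that argument, like the repaired version of yours, ultimately rests on the connectedness of $\partial D_n$, which is precisely where the hypothesis that the $D_n$ are disks (and not just bounded open sets) enters.
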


\begin{proof}
  In order to prove the connectedness of $A$, we show that given any
  two points $z_0,z_1\in A$ there exists a connected subset of $A$
  containing $z_0$ and $z_1$. Denote the straight line segment from
  $z_0$ to $z_1$ by $S$. We assume without loss of generality that
  $S=[0,1]\times\{0\}$ and equip it with the canonical order on the
  unit interval. Further, define
\begin{equation}
  N=\{n\in\N\mid D_n\cap S\neq \emptyset\} ~
  \textup{and} ~ C=(S\cap A)\cup\bigcup_{n\in N}\partial D_n.
\end{equation}
We claim that $C$ is connected. In order to see this, suppose for a
contradiction that $U,V\ssq\R^2$ are disjoint open sets which both
intersect $C$ and whose union covers $C$. Suppose $z_0\in U$ and let
$z_1'\in C\cap V$. Further, let $z_-=\sup\{z\in S\cap A \cap U \mid
z\leq z_1'\}$. By compactness, $z_-\in S\cap A \cap V^c = S\cap A\cap
U$. Consequently, $z_-$ is the left endpoint of an interval $I=S\cap
D_n\ssq S\smin A$ for some $n\in\N,$ and the right endpoint $z_+$ of
this interval belongs to $S\cap A \cap V$. However, this means that
$U$ and $V$ both intersect $\partial D_n$, contradicting the
connectedness of $\partial D_n$.
\end{proof}

Using this together with Proposition~\ref{p.fill-properties}, we can
now show that compact and bounded subsets of the torus have doubly
essential complement.

\begin{lema} \label{l.compact_bounded} If $A\ssq\torus$ is compact and
  bounded, then $A^c$ is doubly essential. Consequently, if $A$ is
  connected, if $A$ is connected, all connected components of $\pi^{-1} (A)$ project injectively onto $A$
  and coincide up to translation by an integer vector.
  \end{lema}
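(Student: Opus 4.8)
The plan is to prove directly that $\R^2\smin\pi^{-1}(A)$ is connected. Granting this, both assertions follow: the set $\R^2\smin\pi^{-1}(A)=\pi^{-1}(A^c)$ is open and $\Z^2$-invariant, so once it is connected it is path-connected, and a path in it from a point $\tilde z\in\pi^{-1}(A^c)$ to $\tilde z+(1,0)$, respectively to $\tilde z+(0,1)$, projects to a loop in $A^c$; these two loops have independent homotopy types, so $A^c$ is doubly essential. For the final clause, boundedness of $A$ already forces that no nonzero integer translate of a component $A_0$ of $\pi^{-1}(A)$ equals $A_0$; since $\pi$ restricts to a covering $\pi^{-1}(A)\to A$ over the connected space $A$, this makes $\pi|_{A_0}$ injective with image $A$, and shows that the components of $\pi^{-1}(A)$ are precisely the translates $A_0+v$, $v\in\Z^2$.

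The first step is a reduction to the case where $A$ is \emph{filled}, i.e.\ $\R^2\smin A_0$ is connected. One replaces $A$ by $\hat A:=\pi(\filll_{\R^2}(A_0))$: the set $\filll_{\R^2}(A_0)$ is bounded because $A_0$ is, and using that $\filll_{\R^2}$ commutes with integer translations and returns the smallest filled-in superset (see Lemma~\ref{l.fill-projection} and the surrounding discussion) one checks that the translates $\filll_{\R^2}(A_0)+v$ are pairwise disjoint --- otherwise $A_0+v$ would lie in a bounded component of $\R^2\smin A_0$, whence $\filll_{\R^2}(A_0)+kv\ssq\filll_{\R^2}(A_0)$ for all $k\ge1$, contradicting boundedness. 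Thus $\hat A$ is a bounded continuum with $\hat A\supseteq A$, so $\hat A^c\ssq A^c$, and it suffices to treat $\hat A$. From now on $A$ is filled, so $\pi^{-1}(A)=\biguplus_{v\in\Z^2}(A_0+v)$ with the translates compact, pairwise disjoint, locally finite, uniformly separated (say $\delta_0:=\inf_{v\neq0}\operatorname{dist}(A_0,A_0+v)>0$) and non-separating.

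Next I would approximate $\pi^{-1}(A)$ from outside by disjoint disks and invoke Lemma~\ref{pc0}. If $A$ is a single point the claim is trivial; otherwise each $A_0+v$ is a non-degenerate non-separating continuum, so $S^2\smin(A_0+v)$ is a simply connected proper subdomain of $S^2$, hence by the Riemann Mapping Theorem exhaustible by open topological disks with compactly contained closures; dually $A_0+v$ is a nested intersection of open topological disks $D\ssq\R^2$ with bounded closures. Therefore, setting $\eps_m:=\min\{1/m,\delta_0/3\}$, for each $m\in\N$ one may choose bounded open disks $D^{(m)}_v$ with $A_0+v\ssq D^{(m)}_v\ssq B_{\eps_m}(A_0+v)$; the estimate $\eps_m\le\delta_0/3$ makes $\{D^{(m)}_v\}_{v\in\Z^2}$ pairwise disjoint for each fixed $m$, and $\eps_m\to0$ together with local finiteness gives $\bigcap_m\biguplus_v D^{(m)}_v=\pi^{-1}(A)$. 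By Lemma~\ref{pc0} each $C_m:=\R^2\smin\biguplus_v D^{(m)}_v$ is connected. Fixing $\tilde z\in\R^2\smin\pi^{-1}(A)$, one has $\tilde z\in C_m$ for all sufficiently large $m$ (once $\eps_m<\operatorname{dist}(\tilde z,\pi^{-1}(A))$), so $\bigcup_{m\ \mathrm{large}}C_m$ is a union of connected sets through the common point $\tilde z$ and hence connected; as this union equals $\R^2\smin\pi^{-1}(A)$, we are done.

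The step I expect to cost the most is the reduction to a filled continuum, together with the verification that the families $\{A_0+v\}$ and $\{D^{(m)}_v\}$ are pairwise disjoint and locally finite: this is exactly where boundedness of $A$ enters, and it is precisely the difficulty flagged in Remark~\ref{r.boundedness}, so it cannot be bypassed. Once this bookkeeping is settled, the disk approximation via the Riemann Mapping Theorem and the limit $m\to\infty$ are routine.
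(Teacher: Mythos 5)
There is a genuine gap, and it sits exactly at the reduction step you flag as the most delicate one. Your whole scheme rests on writing $\pi^{-1}(A)=\biguplus_{v\in\Z^2}(A_0+v)$ for a single component $A_0$ with $\pi(A_0)=A$, so that $\hat A=\pi(\filll_{\R^2}(A_0))$ contains $A$ and double essentiality of $\hat A^c$ transfers to $A^c$. But $\pi(A_0)=A$ is precisely (the nontrivial half of) the final clause of the lemma, and the justification you offer for it --- ``$\pi$ restricts to a covering $\pi^{-1}(A)\to A$ over the connected space $A$, hence $\pi|_{A_0}$ is injective with image $A$'' --- is not valid at this level of generality: injectivity does follow from the fact that no nonzero translate fixes $A_0$, but surjectivity of a component of a covering onto a connected base needs local connectedness (or some substitute) of the base, and $A$ is an arbitrary continuum here. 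If $\pi(A_0)\subsetneq A$ then $\hat A\not\supseteq A$, the loops you produce live in $\hat A^c$ but possibly not in $A^c$, and the argument collapses; so as written the proof is circular (surjectivity is needed for the first statement, and the first statement --- via ``$A$ lies inside $B(\gamma)$ for a trivial loop $\gamma$ in the doubly essential component, so $\pi^{-1}(A)$ is evenly covered'' --- is how the paper actually obtains surjectivity). To make your route work you would need an independent proof that compact components of $\pi^{-1}(A)$ project onto $A$ (e.g.\ a \v{S}ura-Bura--type argument that a compact component of the locally compact set $\pi^{-1}(A)$ is an intersection of compact relatively open sets, whose images are clopen in $A$); that is a genuine extra argument, not covering theory.

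A second, related problem: the first assertion of the lemma is stated for compact bounded $A$ that need not be connected, and the paper uses it in that generality (it is applied to the disjoint union $\bigcup_{i=0}^{q-1}f^i(\Lambda_0)$ in the proof of Proposition~\ref{p.periodic-nonexistence}). Your setup --- one component, translates only, a uniform separation $\delta_0>0$, local finiteness --- does not cover this case, and the natural fix (fill every component of $\pi^{-1}(A)$) runs into nested fills and non-uniform separation, which breaks the pairwise-disjoint disk approximation you need before invoking Lemma~\ref{pc0}. For comparison, the paper argues by contradiction on the components $U$ of $A^c$: if no component is doubly essential, then $\filll(U)$ is an essential annulus, an unbounded disk, or all components are bounded, and in each case one exhibits an unbounded component of $\pi^{-1}(A)$ (Lemma~\ref{pc0} enters only in the last case, applied to the fills of the \emph{complementary} components); connectedness of $A$ is used only afterwards, to place $A$ in a bounded disk and read off the structure of $\pi^{-1}(A)$. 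Your disk-approximation computation of $\R^2\smin\pi^{-1}(\hat A)$ is sound once the reduction is granted, but the reduction itself is where the content of the lemma lies, so the proposal as it stands does not prove the statement.
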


\begin{proof}
  Suppose that $A^c$ is not doubly essential. Then due to
  Proposition~\ref{p.fill-properties}, for every $U\in\conn(A^c)$ the
  set $\filll(U)$ is either a bounded or unbounded topological disk or
  an essential annulus. We distinguish between three corresponding
  cases and show that in each of them $\pi^{-1}(A)$ contains an
  unbounded connected component, contradicting the boundedness of $A$.

First assume that there exists $U\in\conn(A^c)$ such that $\filll(U)$
is an essential annulus. Then $\partial \filll(U)$ consists of one or
two connected components, which are both contained in $A$ but at the
same time lift to unbounded components in $\R^2$. Secondly, suppose
there exists $U\in\conn(A^c)$ such that $D=\filll(U)$ is an unbounded
disk. Fix a connected component $D_0\ssq \pi^{-1}(D)$ and a point
$z_0 \in\partial D_0 \ssq \pi^{-1}(A)$. Then for any $N\geq 0$ we can
choose a sequence $\eta_n \ssq D_0$ of arcs of diameter $N$ converging
in Hausdorff topology and such that $\eta
:=\lim^\mathcal{H}_{n\to\infty}\eta_n$ contains $z_0$ and is contained
in $\partial D_0$. For example, we can identify $D_0$ with $\D$ by the
Riemann Mapping Theorem and choose the $\eta_n$ suitable segments in
the circles of radius $1-1/n$. Then $\eta$ is connected (as the
Hausdorff limit of connected sets) and of diameter $N$. Since $N$ was
arbitrary, this shows that the connected component of $z_0$ in
$\partial D_0\ssq \pi^{-1}(A)$ is unbounded. Finally, assume that all
connected components of $A^c$ are bounded. Then
\[ \widetilde{A}_0 = \R^2 \setminus \bigcup_{U \in \conn(\R^2
  \setminus \pi^{-1}(A))} \filll_{\R^2} (U) \] is the complement of a
family of bounded disks and therefore connected and unbounded by
Lemma~\ref{pc0}.

Thus, as claimed the complement of $A$ contains a doubly essential
component. This implies that $A$ is contained in a bounded topological
disk $D$. Every connected component $D_0$ of $\pi^{-1}(D)$ therefore
contains a subset $A_0$ that projects injectively onto $A$. Since $\pi:D_0\to D$ is 
a homeomorphism we obtain that $A_0$ is connected.
\end{proof}

Given a bounded continuum $A\ssq \torus$, we now choose an
arbitrary connected component $A_0$ of $\pi^{-1}(A)$ and let
\begin{equation}
\filll(A) \ := \ \pi(\filll_{\R^2}(A_0)) \ .
\end{equation} 
Since all connected components of $\pi^{-1}(A)$ coincide up to integer
translations, this definition does not depend on the choice of $A_0$.
Furthermore, we have the following.

\begin{lema} \label{fill2}
Suppose $A$ is a bounded continuum. Then
\begin{equation}
\filll(A)=A\cup \biguplus_{n\in\mathcal N} D_n(A),
\end{equation}
where the $D_n(A)$ are bounded disks and
\begin{equation}
\partial D_n(A)=A\cap \cl[D_n(A)]
\end{equation}
for every $n\in\mathcal N\ssq\N$.
\end{lema}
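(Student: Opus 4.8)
The plan is to work entirely in the plane with the chosen connected component $A_0$ of $\pi^{-1}(A)$, exploiting the fact (from Lemma~\ref{l.compact_bounded}) that $A_0$ projects injectively onto $A$ and that $A$ sits inside a bounded topological disk $D$, so that $A_0$ is a genuine bounded continuum in $\R^2$. First I would recall the elementary planar fill-in description: $\filll_{\R^2}(A_0) = A_0 \cup \bigcup_{\alpha} (A_0)_\alpha$, where $\{(A_0)_\alpha\}$ are the bounded connected components of $\R^2 \smin A_0$. Since $A_0$ is compact, $\R^2 \smin A_0$ is open with countably many components (one unbounded, the rest bounded), so I can enumerate the bounded ones as $\widetilde D_n$, $n\in\mathcal N\ssq\N$. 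Setting $D_n(A) := \pi(\widetilde D_n)$, I get $\filll(A) = \pi(\filll_{\R^2}(A_0)) = A \cup \biguplus_{n\in\mathcal N} D_n(A)$ as sets; the union on the right is literally $\pi$ applied to the disjoint planar decomposition.

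Next I would verify the four assertions packaged into the statement: (i) each $\widetilde D_n$ is an open topological disk (equivalently, simply connected); (ii) each $\widetilde D_n$ projects injectively, so $D_n(A)$ is a bounded disk; (iii) the $D_n(A)$ are pairwise disjoint and disjoint from $A$; and (iv) $\partial D_n(A) = A \cap \cl[D_n(A)]$. For (i): $\filll_{\R^2}(A_0)$ is simply connected by Lemma~\ref{l.fill-projection} (it is the set of points enclosed by loops in\ldots\ actually, more directly, $\filll_{\R^2}(A_0)$ is filled-in and connected, and $\R^2\smin \filll_{\R^2}(A_0) = U_\infty(A_0)$ is connected, so $\overline{\R^2}\smin \filll_{\R^2}(A_0)$ is connected — hence $\filll_{\R^2}(A_0)$ is simply connected); but each $\widetilde D_n$ is itself a connected component of the complement of the \emph{filled-in} continuum $\filll_{\R^2}(A_0)$\ldots\ the cleanest route is: a bounded component of the complement of a continuum in $S^2$ is simply connected iff its closure's complement is connected, and since $\R^2\smin \widetilde D_n \supseteq U_\infty(A_0) \cup A_0 \cup (\text{other }\widetilde D_m)$ is connected (it contains $A_0$, which meets $\partial\widetilde D_n$, plus everything outside, all glued along $A_0$), $\widetilde D_n$ is simply connected, hence a disk by the Riemann Mapping Theorem. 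For (ii)–(iii): since $\widetilde D_n \ssq \filll_{\R^2}(A_0)$ and, by Lemma~\ref{l.fill-projection} applied with the assumption of Lemma~\ref{l.compact_bounded} that $A\ssq D$ with $D$ a bounded disk whose lift component $D_0$ contains $A_0$, we get $\filll_{\R^2}(A_0)\ssq D_0$; since $\pi|_{D_0}$ is a homeomorphism, $\pi$ is injective on $\filll_{\R^2}(A_0)$, giving both injectivity of $\pi|_{\widetilde D_n}$ and disjointness of the images. Boundedness of $D_n(A)$ is inherited from boundedness of $\widetilde D_n$.

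The main obstacle is the boundary identity (iv), $\partial D_n(A) = A \cap \cl[D_n(A)]$, which must be checked in the plane as $\partial \widetilde D_n = A_0 \cap \cl[\widetilde D_n]$ and then pushed down by the homeomorphism $\pi|_{D_0}$. The inclusion $\partial\widetilde D_n \ssq \cl[\widetilde D_n]$ is trivial, and $\partial\widetilde D_n \ssq A_0$ follows because $\widetilde D_n$ is a connected component of the open set $\R^2\smin A_0$, so its topological boundary is contained in $A_0$ (any boundary point is a limit of points of $\widetilde D_n$ and of its complement; if such a point lay in another component $\widetilde D_m$ or in $U_\infty(A_0)$ — both open — it would have a neighbourhood missing $\widetilde D_n$, a contradiction; hence it lies in $A_0$). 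For the reverse inclusion, let $z\in A_0\cap\cl[\widetilde D_n]$; since $A_0$ and $\widetilde D_n$ are disjoint, $z\notin\widetilde D_n$, and as $z$ is a limit of points of $\widetilde D_n$ it is a limit of points not in $A_0$, so $z\notin\inte(A_0)$ either; thus $z\in\partial\widetilde D_n$ (more carefully: $z$ is in the closure but not interior of $\widetilde D_n$, so $z\in\partial\widetilde D_n$). Finally, applying the homeomorphism $\pi|_{\cl[D_0]}\colon \cl[D_0]\to\cl[D]$ — which carries $A_0$ to $A$, $\widetilde D_n$ to $D_n(A)$, boundaries to boundaries, and closures to closures — transports all four properties to the torus, completing the proof. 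The only delicate point to write carefully is ensuring $\cl[\widetilde D_n]\ssq D_0$ (so that $\pi$ is a homeomorphism on the relevant closure), which holds because $\cl[\widetilde D_n] = \widetilde D_n \cup \partial\widetilde D_n \ssq \filll_{\R^2}(A_0)\cup A_0 = \filll_{\R^2}(A_0)\ssq D_0$.
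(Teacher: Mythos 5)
Your argument is correct and follows essentially the same route as the paper's proof: pass to a connected component $A_0$ of $\pi^{-1}(A)$, write $\filll_{\R^2}(A_0)$ as $A_0$ together with the bounded complementary components, observe these are disks satisfying $\partial \widetilde D_n=A_0\cap \cl[\widetilde D_n]$, and project down injectively — you merely supply details (simple connectivity of the $\widetilde D_n$, the boundary identity, injectivity via $\filll_{\R^2}(A_0)\ssq D_0$) that the paper asserts without proof. The only caveat is your phrase about ``the homeomorphism $\pi|_{\cl[D_0]}$'', which need not be injective on $\cl[D_0]$; but this is harmless, since your own observation that $\cl[\widetilde D_n]\ssq \filll_{\R^2}(A_0)\ssq D_0$ places all relevant sets inside $D_0$, where $\pi$ restricted to the compact set $\cl[\widetilde D_n]$ is indeed a homeomorphism onto its image.
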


\begin{proof} Let $A_0$ be a connected component of $\pi^{-1}(A)$ and
$\{D_n(A_0)\}_{n\in\mathcal N}$ be the bounded connected components of
$\R^2\smin A_0$. As $A$ is connected, $D_n(A_0)$ is a disk for every
$n\in\mathcal N$. Further, we have
\begin{equation}
\partial D_n(A_0)=A_0\cap \cl[D_n(A_0)]
\end{equation}
and since $A_0$ is bounded we have that
\begin{equation}
D_n(A_0)\cap (D_n(A_0)+v)=\emptyset
\end{equation}
for every $v\in\Z^2\setminus \{0\}$. Consequently
$\pi(D_n(A_0))=D_n(A)$ is a bounded disk for every $n\in \mathcal N$.
We obtain that
\begin{equation}
  \filll(A)=\pi(\filll(A_0))=\pi(A_0\cup\biguplus_{n\in\mathcal N} D_n(A_0))
   =A\cup\biguplus_{n\in\mathcal N}D_n(A)
\end{equation}
and
\begin{equation}
  \partial D_n(A)=\partial \pi(D_n(A_0))=\pi (\partial D_n(A_0))
  = \pi(A_0\cap \cl[D_n(A_0)]) = A\cap \cl[D_n(A)].
\end{equation}
This finishes the proof.
\end{proof}

\section{Classification of minimal sets} \label{Classification}

In this section, we prove the main classification given in the introduction.

\subsection{Proof of the Classification Theorem}

We start the proof with the following trichotomy.

\begin{prop}\label{p.classification}
  Suppose $f\in\homeo(\torus)$ and $\mathcal M \neq \T^2$ is a minimal set. Then
  one of the following holds:
  \arablist
  \item[$(1)$] $\mathcal{M}^c$ is a disjoint union of topological disks.
  \item[$(2)$] $\mathcal{M}^c$ is a disjoint union of one or more essential annuli and
    topological disks.
  \item[$(3)$] $\mathcal{M}^c$ is a disjoint union of one double essential component and bounded
    topological disks.
  \listend
\end{prop}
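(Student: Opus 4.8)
The plan is to study the connected components of $\M^c$ one at a time, through the fill-in operation of Section~\ref{Preliminaries}. Proposition~\ref{p.fill-properties} already gives, for each component $U$ of $\M^c$, that $\filll(U)$ is a bounded disk, an unbounded disk, an essential annulus or $\T^2$, and that $f(\filll(U))=\filll(f(U))$; so the remaining work is (a) to decide which of these types can occur together in one $\M^c$, and (b) to pass from $\filll(U)$ back to $U$, i.e.\ to show $U=\filll(U)$ for every component other than the doubly essential one in case~(3). Two soft facts will be used repeatedly: $\M$ has empty interior — otherwise $\bigcup_{n\in\Z}f^n(\inte\M)$ is a non-empty open $f$-invariant subset of $\M$, forcing $\M$ to be open and thus $\M=\T^2$ — so that the components of $\M^c$ are domains; and $f$ permutes the components of $\M^c$ together with their fill-ins.

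For (a), the decisive dichotomy is whether $\M$ is bounded. If $\M$ is bounded, Lemma~\ref{l.compact_bounded} gives a doubly essential component $U_0$, so $\filll(U_0)=\T^2$. Lifting: $U_0$ doubly essential forces $\pi^{-1}(U_0)$ to be connected — it contains the grid formed by the lifts of two intersecting loops of independent homotopy type in $U_0$ — and $\Z^2$-invariant, hence its planar complement has only bounded components, so $\filll_{\R^2}(\pi^{-1}(U_0))=\R^2$ and $\R^2\smin\pi^{-1}(U_0)$ is a disjoint union of bounded continua. A second doubly essential component $U_1$ would have $\pi^{-1}(U_1)$ connected, $\Z^2$-invariant (so unbounded) and contained in $\R^2\smin\pi^{-1}(U_0)$, hence in a single bounded continuum — impossible; so $U_0$ is unique and $f$-invariant. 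Every other component $V$ has $\pi^{-1}(V)\ssq\R^2\smin\pi^{-1}(U_0)$, so $V$ is bounded, and by (b) it equals $\filll(V)$, which by Proposition~\ref{p.fill-properties}(b) is a bounded topological disk: this is case~(3). If $\M$ is not bounded, the same lift computation excludes any doubly essential component (it would confine $\pi^{-1}(\M)$ to a union of bounded continua), so every $\filll(U)$, and by (b) every $U$, is a disk or an essential annulus; we are in case~(2) if an essential annulus occurs, in case~(1) otherwise. The three alternatives are mutually exclusive because a doubly essential component forces the bounded case, in which no (necessarily unbounded) essential annulus can occur, matching Theorem~\ref{TEOA}.

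For (b) — the step where minimality is genuinely needed — I would first observe that, for $U$ not the doubly essential component, $U=\filll(U)$ is equivalent to $K:=\filll(U)\cap\M=\emptyset$: if $\filll(U)$ met $\M^c$ only then, being connected, it would lie in the single component of $\M^c$ containing $U$, forcing $\filll(U)=U$. Now $K$ is relatively open in $\M$ (since $\filll(U)$ is open), and if in addition the closure of $K$ avoids $\partial\filll(U)$ — equivalently, if no boundary point of $\filll(U)$ is a limit of points of $\M$ inside $\filll(U)$ — then $\overline K\ssq\filll(U)$, so $K$ is also relatively closed and hence clopen, whence $K=\emptyset$ or $K=\M$ by minimality; and $K=\M$ is impossible because $\partial\filll(U)\ssq\partial U\ssq\M$ (Proposition~\ref{p.fill-properties}(f)) is disjoint from $\filll(U)$. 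So $K=\emptyset$, i.e.\ $U=\filll(U)$. In particular this already rules out ``deep'' holes, i.e.\ components of $\M^c$ (or pieces of $\M$) sitting inside $\filll(U)$ at positive distance from $\partial\filll(U)$.

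The obstacle I expect to be the hardest is precisely the remaining configuration: that $\overline K$ does meet $\partial\filll(U)$, i.e.\ that an inner complementary component $V\ssq\filll(U)$, or the $\M$-part inside $\filll(U)$, accumulates onto $\partial\filll(U)$ (a ``pinch'', in which $U$ acquires a homotopically trivial but $U$-essential loop and is therefore neither a disk nor an essential annulus). Here I would bring the dynamics in more forcefully: reduce to a component $U$ maximal for the fill-in order $V\preceq U\Leftrightarrow\filll(V)\ssq\filll(U)$ (whose chains are finite and which $f$ preserves, so maximal components are permuted and the fill-ins $\{\filll(f^nU)\}_n$ are pairwise disjoint or nested); show, using $\M=\overline{\{f^nz:n\in\Z\}}$ for $z\in\partial V$ together with $\partial\filll(U)\ssq\M$, that the orbit forces a strictly monotone nested chain $\filll(f^{-kn}U)\subsetneq\filll(U)\subsetneq\filll(f^{kn}U)$ for some $n$; and then derive a contradiction from the shrinking of $\filll(f^{-kn}U)$, with a Brouwer fixed-point argument in the sub-case where the orbit of $U$ is periodic. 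Turning this outline into a rigorous argument is the technical heart of the proof.
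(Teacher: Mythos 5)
Your reduction is the right one (show $U=\filll(U)$ for every component other than a doubly essential one, then sort out coexistence), but the proof is not complete where it matters. The step you explicitly defer -- ruling out the configuration in which $\M$-points inside $\filll(U)$ accumulate on $\partial\filll(U)$ -- is precisely the content of the proposition, and your outline for it does not stand as written: the claim that chains for the order $\filll(V)\ssq\filll(U)$ are finite is unjustified a priori (nothing yet prevents infinitely many nested fill-ins), the ``strictly monotone nested chain $\filll(f^{-kn}U)\subsetneq\filll(U)\subsetneq\filll(f^{kn}U)$'' is not forced by density of orbits, and the concluding Brouwer argument is unspecified. In addition, the ``easy'' case contains a genuine error: from $K=\filll(U)\cap\M$ being relatively clopen in $\M$ you conclude $K=\emptyset$ or $K=\M$ ``by minimality'', but minimality only forces closed \emph{invariant} subsets to be trivial, and $K$ is not invariant unless $U$ is; minimal sets (periodic orbits, minimal Cantor sets) have plenty of proper nonempty clopen subsets. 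A smaller issue: in part (a) the assertion that $\pi^{-1}(U_0)$ is connected for a doubly essential component does not follow from the grid argument, which only yields invariance under a finite-index subgroup of $\Z^2$; the boundedness conclusions you want are better drawn from Proposition~\ref{p.fill-properties}(e) and Lemma~\ref{l.compact_bounded}.

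The paper closes exactly this gap by a global argument rather than a component-by-component one, and this is where minimality is really used. Consider all fill-ins of trivial components simultaneously: any two are disjoint or nested; for an increasing sequence $F_n$ of such fill-ins the union is again a disk whose boundary is the Hausdorff limit of the boundaries $\partial F_n\ssq\M$, hence lies in $\M$; by Zorn's lemma the union of all these fill-ins is a disjoint union of maximal disks, so its complement $\M'$ is a compact $f$-invariant set (by Proposition~\ref{p.fill-properties}(g)) which meets $\M$, since it contains the boundaries of the maximal disks. Minimality then gives $\M\ssq\M'$, which is exactly the statement $\filll(\Sigma)=\Sigma$ for every trivial component $\Sigma$; the same argument applied to the essential components (which all share one homotopy type) shows they equal their fill-ins, and Lemma~\ref{Fe} identifies the components as disks, essential annuli, or a doubly essential set, after which the coexistence and uniqueness remarks finish the proof. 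If you want to salvage your component-wise strategy, you would at least need to replace the clopen step by an invariant-set argument of this kind; as it stands, the technical heart you acknowledge is missing is not supplied.
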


\begin{proof}
Let $\conn_{T}(\mathcal{M}^c)\subset \conn(\mathcal{M}^c)$
be the trivial connected components and $\conn_{E}(\mathcal{M}^c)$ the
essential connected components of $\mathcal{M}^c$. Consider
\begin{equation}
\mathcal{M}':=\T^2\setminus \bigcup_{\Sigma\in \conn_T(\mathcal{M}^c)} \filll(\Sigma),
\end{equation}
which is compact and $f$-invariant by
Proposition~\ref{p.fill-properties}(g). We claim that
$\mathcal{M}'\cap \mathcal{M}\neq \emptyset$.  In order to see this,
let
\begin{equation}
\mathcal{F}=\{\filll(\Sigma)\mid \Sigma\in \conn_T(\mathcal{M}^c)\}
\end{equation}
and
\begin{equation}
  \mathcal{D}=\left\{\bigcup_{n\in\N} F_n \mid F_n\textrm{ is an increasing sequence in }
  \mathcal{F}\right\}.
\end{equation}
First, note that all elements in $\mathcal{F}$ are topological disks with boundary
contained in $\mathcal{M}$. Further, $\mathcal{F}$ is partially ordered by
inclusion and two elements of $\mathcal{F}$ are either disjoint or one is
contained in the other. Consequently, the same is true for
$\mathcal{D}$. Furthermore, let $\{F_n\}_{n\in\N}$ be an increasing sequence in
$\mathcal{F}$. Then $D = \bigcup_{n\in\N} F_n \in \mathcal{D}$ is a disk, and
\begin{equation}
{\lim}_{n\to\infty}^{\mathcal{H}} \partial F_n = \partial D \ssq\mathcal{M}.
\end{equation}
Now, suppose $\{D_n\}_{n\in\N}$ is an increasing sequence in $\mathcal{D}$. Then
$\widehat D=\bigcup_{n\in\N}D_n$ is again an element of
$\mathcal{D}$. Hence, if we define $\mathcal{D}_{\max}$
as the set of maximal elements of $\mathcal{D}$, then due to the Lemma of Zorn
every element of $\mathcal{D}$ is contained in an element of
$\mathcal{D}_{\max}$. Thus, we have
$$\mathcal{M}'\ =\ \T^2\setminus \bigcup_{D\in\mathcal{D}}D \ = \ \T^2\setminus
\biguplus_{D\in\mathcal{D}_{max}}D \ . $$ Since $\partial D\subset \mathcal{M}$
for every $D\in\mathcal{D}_{max}$, this implies in particular that
$\mathcal{M}'\cap \mathcal{M}\neq \emptyset$. By minimality of $\mathcal{M}$ we
therefore have $\mathcal{M}\subset\mathcal{M}'$, so that $\filll(\Sigma)=\Sigma$
for every $\Sigma\subset \conn_T(\mathcal{M}^c)$. In other words, every trivial
component of $\mathcal{M}^c$ is a disk.

There exists an integer vector $(p,q)\in\Z^2\setminus\{0\}$ such that every element in $\conn_E(\mathcal{M}^c)$
is $(p,q)$-essential. Moreover, the above argument adapted to this case shows
that for every $\Sigma \in \conn_E(\mathcal{M}^c)$ we have
$\filll(\Sigma)=\Sigma$, and hence every $\Sigma \in \conn_E(\mathcal{M}^c)$ is a
$(p,q)$-annulus.

To conclude the proof, it now suffices to remark that unbounded disks or essential
components cannot coexist with a doubly essential component, and that any doubly
essential component is necessarily unique.
\end{proof}

In order to show the non-existence of periodic bounded disks in
$\conn(\mathcal{M}^c)$, we start with a preliminary lemma.

\begin{lema}\label{lemanp}
  Let $\mathcal{M}\subset \T^2$ be a connected minimal set of a
  homeomorphism $f:\T^2\rightarrow \T^2$. Further, assume that there
  exists a periodic bounded disk $D_0\in \conn(\mathcal{M}^c)$ of
  period $p$. Then, $\mathcal{M}=\partial D_0= ... = \partial
  D_{p-1}$, where $D_k=f^k(D_0)$ for $k=1,..., p-1$.
\end{lema}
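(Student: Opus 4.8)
The plan is to exploit minimality of $\mathcal{M}$ together with the fact that the boundary of a periodic disk is itself a closed invariant set. First I would observe that since $D_0$ is a bounded disk and a connected component of $\mathcal{M}^c$, its boundary $\partial D_0$ is a nonempty compact subset of $\mathcal{M}$; moreover, because $f^p(D_0)=D_0$ and $f$ is a homeomorphism, we get $f^p(\partial D_0)=\partial D_0$. Thus $\partial D_0$ is a nonempty compact $f^p$-invariant set. The next step is to upgrade this to $f$-invariance of the full orbit: setting $D_k=f^k(D_0)$ and noting that each $D_k$ is again a bounded disk in $\conn(\mathcal{M}^c)$, the set $K:=\bigcup_{k=0}^{p-1}\partial D_k$ is compact, contained in $\mathcal{M}$, and satisfies $f(K)=K$. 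By minimality of $\mathcal{M}$ we conclude $K=\mathcal{M}$.

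It remains to show that in fact each individual $\partial D_k$ already equals $\mathcal{M}$, i.e. the sets $\partial D_k$ are not merely pieces whose union is $\mathcal{M}$ but each one is the whole thing. Here I would use connectedness of $\mathcal{M}$ in an essential way. The key point is that distinct components $D_j\neq D_k$ of $\mathcal{M}^c$ are disjoint open disks, and one checks that $\partial D_j$ and $\partial D_k$ are either equal or disjoint: indeed if $z\in\partial D_j\cap\partial D_k$ with $D_j\neq D_k$, then every neighborhood of $z$ meets both $D_j$ and $D_k$, but — and this is where I would invoke the planar/fill-in picture, using that $D_j,D_k$ are Jordan domains whose lifts are genuine Jordan disks in $\R^2$ via Lemma~\ref{fill2} and Lemma~\ref{l.compact_bounded} — one produces a separation of a neighborhood contradicting that $\partial D_j$ is locally connected there, or more simply one shows directly that $\mathcal{M}\subseteq\partial D_j$ forces the $D_k$ for $k\neq j$ to lie inside $D_j$, which is absurd since they are distinct components. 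The cleaner route: since $\partial D_0\subseteq\mathcal{M}$ is closed and nonempty, and $\mathcal{M}$ is minimal, to get $\partial D_0=\mathcal{M}$ it suffices to show $\partial D_0$ is $f$-invariant; but $f(\partial D_0)=\partial D_1$, so we need $\partial D_1=\partial D_0$, and iterating, all $\partial D_k$ coincide. So the whole proof reduces to the single claim: \emph{$\partial D_0=\partial D_1=\dots=\partial D_{p-1}$.}

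To prove that claim I would argue that $K=\bigcup_{k=0}^{p-1}\partial D_k=\mathcal{M}$ (already established) is connected, while the $\partial D_k$ are pairwise disjoint-or-equal closed sets; a connected set that is a finite union of pairwise disjoint-or-equal nonempty closed sets must consist of a single one of them, hence all the $\partial D_k$ that are distinct would give a disconnection unless there is only one distinct value. This finishes the proof. The main obstacle I anticipate is the dichotomy "$\partial D_j$ and $\partial D_k$ are equal or disjoint" for distinct components $D_j,D_k$: this is not automatic for arbitrary open sets, and I would need the special structure of the $D_k$ as bounded disks with prime-end-type boundary behaviour — concretely, I would lift to $\R^2$ (legitimate since by Lemma~\ref{l.compact_bounded} all components of $\pi^{-1}(\mathcal{M})$ coincide up to integer translation and $\mathcal{M}$ sits inside a bounded disk), work with the genuine Jordan domains given by the components of $\R^2\setminus\pi^{-1}(\mathcal{M})$ as in Lemma~\ref{fill2}, and use that in the plane the boundaries of two distinct complementary Jordan domains of a continuum, if they share a point, force one domain to be contained in the closure of the other — impossible for distinct complementary components. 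That planar separation argument is the technical heart; everything else is bookkeeping with minimality.
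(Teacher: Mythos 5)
Your opening moves match the paper: $K=\bigcup_{k=0}^{p-1}\partial D_k$ is a non-empty compact subset of $\mathcal{M}$ with $f(K)=K$ (since $f$ permutes the disks $D_k$), so minimality gives $K=\mathcal{M}$, and the whole lemma reduces to showing that all the $\partial D_k$ coincide. The gap is exactly at the step you flag as the technical heart: the claim that for distinct components $D_j\neq D_k$ of $\mathcal{M}^c$ the boundaries $\partial D_j$ and $\partial D_k$ are either equal or disjoint, which you propose to derive from a purely planar fact ("boundaries of two distinct complementary Jordan domains of a continuum, if they share a point, force one domain to be contained in the closure of the other"). That planar statement is false: let $K_0=C_1\cup C_2\subset\R^2$ be two round circles tangent at a single point; $K_0$ is a continuum, its two bounded complementary components are Jordan domains whose boundaries $C_1$ and $C_2$ share the tangency point, are not equal, and neither domain lies in the closure of the other. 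Your alternative sub-arguments do not repair this: local connectedness of $\partial D_j$ is not available (boundaries of complementary disks of minimal sets can be very wild), and the "simpler" route assumes $\mathcal{M}\subseteq\partial D_j$, which is what is to be proved. More fundamentally, the equal-or-disjoint dichotomy cannot follow from topology alone, since it fails for general connected compacta; given your first step it is essentially equivalent to the conclusion of the lemma, so any proof of it must use the dynamics again.

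The paper closes precisely this gap with a second application of minimality, via a counting function rather than a boundary dichotomy. For $x\in\mathcal{M}$ set $r(x)=\#\{k\in\{0,\ldots,p-1\}:x\in \cl[D_k]\}$. Each superlevel set $r^{-1}(\{k:k\geq k_0\})$ is closed and $f$-invariant (because $f$ permutes the closures $\cl[D_k]$), hence empty or all of $\mathcal{M}$; therefore $r$ is constant, say $r\equiv m$. Then for $x\in\mathcal{M}$ one puts $I_x=\bigcap\{\partial D_k: x\in\cl[D_k]\}$, an intersection of exactly $m$ boundaries containing $x$. Two such sets are either equal or disjoint -- a common point would have $r$-value strictly larger than $m$ -- and there are only finitely many of them, so they form a finite cover of the connected set $\mathcal{M}$ by pairwise disjoint non-empty closed sets; hence they all coincide and equal $\mathcal{M}$. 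Since every $\partial D_k$ is non-empty and any $y\in\partial D_k$ has $D_k$ among the disks defining $I_y$, one gets $\mathcal{M}=I_y\subseteq\partial D_k$ for every $k$, i.e. $\partial D_0=\cdots=\partial D_{p-1}=\mathcal{M}$. In short: the correct substitute for your dichotomy is the equal-or-disjoint property of the intersections $I_x$, and that property holds only because $r$ is constant, which is where minimality, not planar separation, does the work.
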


\begin{proof}
Since $\partial D_0\cup ... \cup \partial D_{p-1}$ is an $f$-invariant set
contained in $\mathcal{M}$, we have that
\[ \mathcal{M}=\partial D_0\cup ... \cup \partial D_{p-1}. \] Given
$x\in \mathcal{M}$, let $r(x)\in \{1,...,p\}$ be the number of disks
in $\{D_0, ..., D_{p-1}\}$ for which $x\in \cl[D_k]$ ($k=0,...,p-1$).
Now, for any $k_0=0\ld p-1$ the set $r^{-1}(\{k:k\geq k_0\})\ssq
\mathcal{M}$ is closed and invariant, and therefore either empty or
equal to $\mathcal{M}$.  By minimality of $\mathcal{M}$, this implies
that $r$ is constant, say $r=m\in\{1\ld p\}$.

Now, for every $x\in\mathcal{M}$ define $I_x=\bigcap_{i=1}^{m}\partial
D_{k_i}$, where $ D_{k_1}, ..., D_{k_m}$ are the disks for which $x\in
\cl[D_{k_i}]$.  For every $x\in\mathcal{M}$ the set $I_x$ is closed,
and the collection of these sets $\mathcal{Y}=\{I_x:x\in\mathcal{M}\}$
is a finite family. Suppose that it is given by $I_{x_1},...,I_{x_N}$.
Then, we have that $\mathcal{M}=\bigcup_{i=1}^N I_{x_i}$. Further,
$I_{x_i}\cap I_{x_j}=\emptyset$ if $i\neq j$, since $z\in I_{x_i}\cap
I_{x_j}$ would imply $r(z)>m$. Therefore, by connectedness of
$\mathcal{M}$ we have
\[ I_{x_1}=...=I_{x_N}=\mathcal{M}, \]
which implies that
\[ \mathcal{M}=\partial D_0= ... = \partial D_{p-1}. \]
This finishes the proof.
\end{proof}

\begin{lema}\label{np}
Let $\mathcal{M}\subset \T^2$ be a connected minimal set of a homeomorphism
$f:\T^2\rightarrow \T^2$, and assume that $\conn(\mathcal{M}^c)$ does not contain
any doubly essential component. Then every bounded disk $D_0\in
\conn(\mathcal{M}^c)$ is non periodic.
\end{lema}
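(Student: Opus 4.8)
The plan is to argue by contradiction, so suppose that some $D_0\in\conn(\mathcal{M}^c)$ is a bounded disk which is periodic, say of minimal period $p\geq 1$, and set $D_k:=f^k(D_0)$ for $0\le k\le p-1$. Since $\mathcal{M}$ is connected, Lemma~\ref{lemanp} applies and yields $\mathcal{M}=\partial D_0=\cdots=\partial D_{p-1}$. In particular $K:=\overline{D_0}=D_0\cup\mathcal{M}$ is a continuum with $\partial K=\mathcal{M}$, and every connected component of $\mathcal{M}^c$ distinct from $D_0$ (these include $D_1,\ldots,D_{p-1}$ when $p\ge2$) is disjoint from $\overline{D_0}$ and hence contained in $\T^2\setminus K$; thus $\mathcal{M}^c=D_0\uplus(\T^2\setminus K)$, and by hypothesis none of the components of $\T^2\setminus K$ is doubly essential.

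The key step is to show that $K$ is a \emph{bounded} continuum. Granting this, Lemma~\ref{l.compact_bounded} applies to $K$ and shows that $\T^2\setminus K=K^c$ contains a doubly essential connected component; since $\T^2\setminus K\subseteq\T^2\setminus\mathcal{M}=\mathcal{M}^c$, this contradicts the hypothesis that $\conn(\mathcal{M}^c)$ contains no doubly essential component, and the proof is finished. So everything reduces to the boundedness of $K$.

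To prove that $K$ is bounded we must show that $\pi$ is injective on $\overline{\widetilde{D}_0}$, where $\widetilde{D}_0$ is a connected component of $\pi^{-1}(D_0)$; note that $\widetilde{D}_0$ is a bounded simply connected domain, since $D_0$ is a bounded disk and, being a trivial component of $\mathcal{M}^c$, satisfies $D_0=\filll(D_0)$ (as in the proof of Proposition~\ref{p.classification}), so that $\overline{\R^2}\setminus\widetilde{D}_0$ is connected and $\partial\widetilde{D}_0$ is a connected subset of $\pi^{-1}(\mathcal{M})$. Suppose $\pi$ is not injective on $\overline{\widetilde{D}_0}$. Then $\partial\widetilde{D}_0\cap(\partial\widetilde{D}_0+v)\neq\emptyset$ for some $v\in\Z^2\setminus\{0\}$ (the interiors being disjoint), so the component $\widetilde{\mathcal{M}}$ of $\pi^{-1}(\mathcal{M})$ containing $\partial\widetilde{D}_0$ is unbounded and has non-trivial stabiliser $H=\{w\in\Z^2:\widetilde{\mathcal{M}}+w=\widetilde{\mathcal{M}}\}$. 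If $H=\Z^2$, then $\widetilde{\mathcal{M}}=\pi^{-1}(\mathcal{M})$ and $\pi^{-1}(\T^2\setminus K)=\pi^{-1}(D_0)=\biguplus_{w\in\Z^2}(\widetilde{D}_0+w)$ is a disjoint family of bounded open disks, and one locates a doubly essential component of $\mathcal{M}^c$ by the argument used in the proof of Proposition~\ref{p.classification} combined with Lemma~\ref{pc0}. If $H\cong\Z$, then $\mathcal{M}$ is essential, so by Proposition~\ref{p.classification} we are in case $(2)$ and $\mathcal{M}^c$ contains an essential annulus $A$; but then $\widetilde{D}_0$ is a \emph{bounded} connected component of $\R^2\setminus\widetilde{\mathcal{M}}$ lying entirely on one side of a lift $\widetilde{A}$ of $A$, which must be reconciled with $\pi(\partial\widetilde{D}_0)=\mathcal{M}$ being essential of the homotopy type of $A$. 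I expect this last incompatibility — together with the degenerate configuration $\T^2\setminus K=\emptyset$, in which $\mathcal{M}^c=D_0$ is a single dense bounded disk — to be the main obstacle; resolving it should use the minimality of $\mathcal{M}$, for instance via the observation that when $p=1$ the set of points of $\mathcal{M}$ at which $\overline{\widetilde{D}_0}$ meets a non-trivial integer translate of itself is a proper, closed, $f$-invariant subset of $\mathcal{M}$ and hence empty.
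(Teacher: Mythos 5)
Your skeleton agrees with the paper's proof up to the halfway point: contradiction, Lemma~\ref{lemanp} giving $\mathcal{M}=\partial D_0$, and, in the case where $\cl[\widetilde D_0]$ is disjoint from all its nonzero integer translates, the application of Lemma~\ref{l.compact_bounded} to $\cl[D_0]$ to produce a doubly essential component. But the heart of the lemma is the other case, where $\cl[\widetilde D_0]$ \emph{does} meet some translate $\cl[\widetilde D_0]+v$, and this is exactly what you leave unresolved. Your proposed fix begs the question: the set of $x\in\mathcal{M}$ at which $\partial\widetilde D_0$ meets a nontrivial translate is indeed closed and invariant (for a suitable lift), but minimality only gives the dichotomy ``empty or all of $\mathcal{M}$'', and the whole difficulty is to exclude the second alternative; you simply assert properness. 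The paper does the real work here with a finite combinatorial argument: setting $r(x)=\#\{v\neq 0:\widetilde x+v\in\partial\widetilde D_0\}$, minimality forces $r\equiv m\geq 1$; boundedness of $\widetilde D_0$ allows only finitely many translation patterns $\xi_1,\ldots,\xi_N$, whose associated closed sets $A_k$ cover $\partial\widetilde D_0$ and are pairwise equal or disjoint; connectedness of $\partial\widetilde D_0$ forces a single pattern everywhere, which then propagates ($z+nv^1_j\in\partial\widetilde D_0$ for all $n$) and contradicts boundedness of $\widetilde D_0$. Nothing of this kind appears in your proposal.

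Your attempted subdivision by the stabiliser $H$ of the lifted component of $\pi^{-1}(\mathcal{M})$ also does not close the gap. In the case $H=\Z^2$, the identity $\pi^{-1}(\T^2\setminus K)=\pi^{-1}(D_0)$ is false as stated, and knowing from Lemma~\ref{pc0} that $\R^2\setminus\biguplus_{w}(\widetilde D_0+w)$ is connected yields no doubly essential component of $\mathcal{M}^c$: consider the purely topological configuration where $\pi^{-1}(\mathcal{M})$ is the integer grid and $D_0$ is the open unit square, so $\cl[D_0]=\T^2$, $\T^2\setminus K=\emptyset$, and $\mathcal{M}^c=D_0$ is a single trivial bounded disk. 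This shows that no argument ignoring the dynamics (i.e. using only the topology of the lift, as in your $H=\Z^2$ and $H\cong\Z$ subcases) can succeed; minimality must enter essentially, as it does in the paper's counting argument. In its present form the proposal is therefore incomplete at the decisive step.
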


\begin{proof} Let us suppose for a contradiction that there exists a
  periodic bounded disk $D_0\in \conn(\mathcal{M}^c)$. Since
  $\mathcal{M}$ is connected, Lemma \ref{lemanp} shows that
  $\mathcal{M}=\partial D_0$. Let $\widetilde{D}_0$ be a connected
  component of $\pi^{-1}(D_0)$. Then $\pi:\widetilde{D}_0\rightarrow
  D_0$ is a homeomorphism and $\pi:\partial \widetilde{D}_0\rightarrow
  \partial D_0$ is onto.  We now split the proof into two cases, each
  leading to a contradiction. First suppose
  \[ \left(\cl [\widetilde{D}_0]+v\right)\cap \cl[\widetilde{D}_0]=
  \emptyset ~\textup{for~every}~ v\in\Z^2\setminus \{0\}. \] Then
  $\cl[D_0]$ is bounded and compact, such that
  Lemma~\ref{l.compact_bounded} implies the existence of a doubly
  essential component in $D_0^c$.  Secondly, assume that
  \[ \left( \cl[\widetilde{D}_0]+v \right) \cap
  \cl[\widetilde{D}_0]\neq \emptyset
  ~\textup{for~some}~v\in\Z^2\setminus \{0\}.\] In this case, define
  $r:\mathcal{M}\rightarrow \Z$ as $r(x)=\# \{v\in \Z^2\setminus
  \{0\}:\widetilde{x}+v\in\partial\widetilde{D}_0\}$ where
  $\widetilde{x}\in\partial\widetilde{D}_0\cap \pi^{-1}(x)$. It is verified 
  that $r(x)$ does not depend on $\widetilde{x}$. Since
  $\widetilde{D}_0$ is bounded, $r$ is finite. Further, we have that
  $r^{-1}(\{k:k\geq k_0\})$ is a closed and $f$-invariant subset of
  $\mathcal{M}$ for every $k_0\in\Z$. This implies by minimality of
  $\mathcal{M}$ that $r^{-1}(\{k:k\geq k_0\})$ is either empty or
  equal to $\mathcal{M}$, so $r(x)$ does not depend on
  $x\in\mathcal{M}$. Therefore, $r(x)=m$ for some positive integer
  $m$. Define
  \[ \mathcal{Y}=\left \{ (v_1,...,v_m)\in (\Z^2)^m: \exists z \in
    \partial\widetilde{D}_0\mbox{ such that }z+v_1,...,z+v_m\in
    \partial\widetilde{D}_0 \right\}.\] Since $\widetilde{D}_0$ is
  bounded, the set $\mathcal{Y}$ has to be finite, say
  $\mathcal{Y}=\{\xi_1,...,\xi_N\}$ with $\xi_i=(v^i_1\ld v^i_m)$. For
  $k=1,...,N$ define the sets
  \[ A_k=\{z\in
  \partial\widetilde{D}_0:z+v^k_1,...,z+v^k_m\in\partial\widetilde{D}_0\}.\]
  It is readily verified that $A_k\subset \partial\widetilde{D}_0$ is
  closed and that $\partial\widetilde D_0=\bigcup_{k=1}^N A_k$.
  
  When $\xi_i$ is just a permutation of the vector $\xi_j$, then
  obviously $A_i=A_j$. Otherwise, we must have $A_i\cap A_j =
  \emptyset$, since in this case the value $r(z)$ would be strictly
  greater than $m$ for any $z\in A_i\cap A_j$, which is not possible.
  Therefore the sets $A_i$ are either equal or pairwise disjoint. As
  $\partial\widetilde{D}_0$ is connected all sets $A_i$ have to
  coincide, and this implies $\partial\widetilde D_0=A_1$.  However,
  this means that $z+nv^1_j\in\partial\widetilde{D}_0$ for every
  $n\in\N$ and $j=1\ld m$, contradicting the boundedness of
  $\widetilde{D}_0$.
\end{proof}

\begin{prop}\label{p.periodic-nonexistence}
  If $\mathcal{M}$ in Proposition~\ref{p.classification} is of type 1 or 2, then
  $\conn(\mathcal{M}^c)$ does not contain any bounded periodic disk.
\end{prop}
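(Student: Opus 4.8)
The plan is to reduce the statement to Lemma~\ref{np} by passing to a connected minimal set. Suppose, for a contradiction, that $D_0\in\conn(\mathcal{M}^c)$ is a bounded periodic disk of period $p$, and set $D_k:=f^k(D_0)$ for $k=0,\dots,p-1$. Since $\bigcup_{k=0}^{p-1}\partial D_k$ is a nonempty compact $f$-invariant subset of $\mathcal{M}$, minimality yields $\mathcal{M}=\bigcup_{k=0}^{p-1}\partial D_k$. As $\mathcal{M}$ is of type $1$ or $2$, $\conn(\mathcal{M}^c)$ contains no doubly essential component. The only reason Lemma~\ref{np} does not apply at once is that $\mathcal{M}$ need not be connected, so the core of the argument is this reduction.

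First I would show that each $\partial D_k$ is a continuum. Fix a connected component $\widetilde{D}_0$ of $\pi^{-1}(D_0)$; since $D_0$ is a bounded topological disk, $\pi$ restricts to a homeomorphism $\widetilde{D}_0\to D_0$, so $\widetilde{D}_0$ is a bounded, simply connected domain, whence $\overline{\R^2}\setminus\widetilde{D}_0$ is connected and $\partial\widetilde{D}_0$ is a continuum (a classical fact for simply connected planar domains, e.g.\ via the Riemann mapping theorem). Since no point of $\partial\widetilde{D}_0$ can lie in a translate $\widetilde{D}_0+v$ with $v\neq0$ (this would force $\widetilde{D}_0\cap(\widetilde{D}_0+v)\neq\emptyset$), we obtain $\partial D_0=\pi(\partial\widetilde{D}_0)$, a continuum; hence so is each $\partial D_k=f^k(\partial D_0)$. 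Consequently $\mathcal{M}$ has at most $p$ connected components; by minimality these are cyclically permuted by $f$, say there are $q$ of them. Let $C$ be the component containing $\partial D_0$ and set $g:=f^q$. Then $C$ is connected, $C\neq\T^2$, and $C$ is $g$-minimal (a proper closed $g$-invariant subset of $C$ would, through its $f$-iterates, produce a proper closed $f$-invariant subset of $\mathcal{M}$). Moreover $D_0$ is a connected component of $C^c$: it is open and, since $\partial D_0\subseteq C$, it is also relatively closed in the component $\widehat D_0$ of $C^c$ that contains it, so $D_0=\widehat D_0$. Finally $D_0$ is $g$-periodic.

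Now I would distinguish two cases. If $\conn(C^c)$ contains no doubly essential component, then Lemma~\ref{np} applied to $(g,C)$ asserts that the bounded disk $D_0\in\conn(C^c)$ is non-periodic --- a contradiction. If $\conn(C^c)$ does contain a doubly essential component $U$, then $\filll(U)=\T^2$ by Proposition~\ref{p.fill-properties}(e); by Proposition~\ref{p.fill-properties}(a) some point of $C$ lies in a Jordan disk $B(\gamma)$ bounded by a trivial loop $\gamma\subseteq U$, and since $C\cap U=\emptyset$ we have $C\cap\gamma=\emptyset$, hence $C\subseteq B(\gamma)$; thus $C$ is bounded. Then all $f$-translates of $C$, which are the connected components of $\mathcal{M}$, are bounded, so $\mathcal{M}$ is compact and bounded. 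But the first part of the proof of Lemma~\ref{l.compact_bounded} uses only compactness and boundedness --- not connectedness --- and therefore shows that $\mathcal{M}^c$ contains a doubly essential component, contradicting that $\mathcal{M}$ is of type $1$ or $2$. Either way we reach a contradiction, which proves the proposition.

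I expect the main obstacle to be the passage between the possibly disconnected minimal set $\mathcal{M}$ and the connected $g$-minimal set $C$: one must verify that $D_0$ survives as a full connected component of $C^c$ (the clopen argument above) and, in the second case, that the planar fill-in arguments of Lemmas~\ref{pc0} and \ref{l.compact_bounded} remain valid for possibly disconnected compact bounded subsets of $\T^2$ --- both are routine but need to be spelled out. When $\mathcal{M}$ is already connected, which is the situation in most applications, the statement is immediate from Lemma~\ref{np}.
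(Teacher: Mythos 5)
Your proof is correct and takes essentially the same route as the paper's: reduce to the connected component $C=\Lambda_0$ of $\mathcal{M}$ containing $\partial D_0$, which is minimal for $f^q$, invoke Lemma~\ref{np} to force a doubly essential component in $C^c$, and then use (the first, connectedness-free part of) Lemma~\ref{l.compact_bounded} to produce a doubly essential component of $\mathcal{M}^c$, contradicting type 1 or 2. Your two-case split and the explicit verifications that $\partial D_0$ is a continuum and that $D_0$ is a full connected component of $C^c$ merely spell out steps the paper leaves implicit.
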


\begin{proof} Suppose for a contradiction that $\mathcal{M}$ is of type 1 or 2 and
$D_0\in\conn(\mathcal{M}^c)$ is a bounded periodic disk of period $p$. Let
$D_k=f^k(D_0)$ as before. Then $\bigcup_{k=0}^{p-1}\partial D_k \ssq\mathcal{M}$
is compact and invariant, so that by minimality $\bigcup_{k=0}^{p-1}\partial
D_k =\mathcal{M}$.

Let $\Lambda_0$ be the connected component of $\mathcal{M}$ which contains
$\partial D_0$. Then $\Lambda_0$ is $q$-periodic for some $q\leq p$ and
minimal for $f^q$. By Lemma~\ref{np}, $\Lambda_0^c$ contains a doubly
essential component, and so does $f^i(\Lambda_0)^c$ for $i=0\ld q-1$. However,
due to Lemma~\ref{l.compact_bounded} this implies that
\begin{equation}
\mathcal{M}^c=\left(\bigcup_{i=0}^{q-1}f^i(\Lambda_0)\right)^c
\end{equation}
contains a doubly essential component, contradicting our assumption.
\end{proof}

In the next section we will see that also unbounded disk are wandering for type 2 minimal sets.

\subsection{Minimal sets of type 2}

In this section, we give a more detailed description of minimal sets
of type 2. Our aim is the following addendum to
Proposition~\ref{p.classification}.

\begin{add}\label{a.type2}
  Suppose $f\in\homeo(\torus)$ and $\mathcal{M}$ is a minimal set of
  type 2. Then one of the following holds.  \romanlist
\item[\textup{(i)}] The essential annuli in $\textrm{Conn}(\mathcal{M}^c)$ are
  periodic and $\mathcal{M}$ is the orbit of the boundary of an
  essential periodic circloid. Further any disk in $\textrm{Conn}(\mathcal{M}^c)$ is wandering.
\item[\textup{(ii)}] $f$ is semiconjugate to a one-dimensional irrational
  rotation, and every element in $\textrm{Conn}(\mathcal{M}^c)$ is wandering.  \listend
\end{add}
We start with some purely topological facts concerning circloids.  We
call a set $A\ssq \A$ {\em essential}, if $\A\smin A$ does not contain
a connected component which is unbounded above and below. If $A$ is
bounded above, we denote by $\U ^+(A)$ the connected component of
$\A\smin \cl(A)$ which is unbounded above. Similarly, we define
$\U^-(A)$ when $A$ is bounded below. Further, we write $\U^{-+}(A)$
instead of $\U^+(\U^-(A))$, and use analogous notation for longer
concatenations of these operations.  This leads to a simple procedure
to produce circloids.

\begin{lema}[\cite{jaeger:2009b}] \label{l.frontiers}
  Suppose $A\ssq \A$ is essential and bounded above. Then
\begin{equation}
\mathcal{C}^+(A) \ = \ \A \smin (\U^{+-}(A) \cup \U^{+-+}(A))
\end{equation}
is a circloid. Further $\U^{+-+-}(A)=\U^{+-}(A)$.
\end{lema}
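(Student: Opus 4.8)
The plan is to work throughout in the annulus $\A$ (and later in its two‑point compactification), keeping at hand the elementary \emph{push‑off} observation: if $U$ is open in a locally connected space $X$ and $W$ is a connected component of $X\smin\mathrm{cl}(U)$, then $W$ is open, $U\cap\mathrm{cl}(W)=\emptyset$ (a point of the open set $U$ has a whole neighbourhood missing $W$), and consequently $\partial W\ssq\partial U$. The first task is to verify that all the iterated operations are legitimate. Since $A$ is essential and bounded above, $\U^+(A)$ is defined, connected, open and unbounded above; it lies in a single component of $\A\smin A$, which by essentiality cannot be unbounded below, so $\U^+(A)$ is bounded below and $\U^{+-}(A)=\U^-(\U^+(A))$ is defined. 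Iterating, I would record that each of $\U^+(A)$, $\U^{+-}(A)$, $\U^{+-+}(A)$, $\U^{+-+-}(A)$ contains a full horizontal strip $\kreis\times[N,\infty)$ or $\kreis\times(-\infty,-N]$ near one of the two ends of $\A$; this both makes the next $\U^{\pm}$‑operation applicable and forces the ``unbounded above'' (respectively ``below'') component at that stage to be unique.

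Granting this, the identity $\U^{+-+-}(A)=\U^{+-}(A)$ splits into two inclusions. For ``$\supseteq$'': by push‑off $\U^{+-}(A)$ is disjoint from $\mathrm{cl}(\U^{+-+}(A))$, and it is connected and unbounded below, hence contained in the unbounded‑below component $\U^{+-+-}(A)$. For ``$\subseteq$'': the same push‑off argument gives $\U^+(A)\ssq\U^{+-+}(A)$, so $\A\smin\mathrm{cl}(\U^{+-+}(A))\ssq\A\smin\mathrm{cl}(\U^+(A))$; the unbounded‑below component of the left side, namely $\U^{+-+-}(A)$, therefore sits inside that of the right side, which is $\U^{+-}(A)$. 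Applying push‑off once more to the now‑stabilised pair also gives $\partial\U^{+-}(A)=\partial\U^{+-+}(A)=:\Gamma$, and $\Gamma\ssq\mathcal{C}^+(A)$.

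Next I would check that $\mathcal{C}^+(A)$ is an annular continuum. It is closed (complement of two open sets), nonempty (it contains $\Gamma$), and contained in a compact horizontal band (the two removed sets contain strips near the two ends), hence compact; and $\A\smin\mathcal{C}^+(A)=\U^{+-}(A)\sqcup\U^{+-+}(A)$ is by construction a disjoint union of two connected open sets, one unbounded below and bounded above, the other unbounded above and bounded below --- exactly property (ii) of a circloid. The real work is connectedness of $\mathcal{C}^+(A)$, which I expect to be the main obstacle. Here I would pass to the sphere $\widehat\A=\A\cup\{N,S\}$ obtained by collapsing the two ends. Then $V^-:=\U^{+-}(A)\cup\{S\}$ and $V^+:=\U^{+-+}(A)\cup\{N\}$ are connected open subsets of $\widehat\A\cong S^2$, and each is a connected component of the complement of a compact connected subset of $S^2$ (namely $\mathrm{cl}(\U^+(A))\cup\{N\}$, respectively $\mathrm{cl}(\U^{+-}(A))\cup\{S\}$), hence simply connected. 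By the classical fact that the frontier of a simply connected domain in $S^2$ is connected (equivalently, unicoherence of $S^2$ applied to the closed connected sets $\mathrm{cl}(V^-)$ and $S^2\smin V^-$, whose intersection is $\partial V^-$), the set $\Gamma=\partial V^-$ is connected. Finally, any point of $\mathcal{C}^+(A)\smin\Gamma$ lies in some component $W$ of $\widehat\A\smin\mathrm{cl}(V^-)$ other than $V^+$; push‑off gives $\partial W\ssq\partial V^-=\Gamma$ and, since $W$ misses $V^-$ and $V^+$, $\mathrm{cl}(W)\ssq\mathcal{C}^+(A)$. Hence $\mathcal{C}^+(A)=\Gamma\cup\bigcup_W\mathrm{cl}(W)$, a union of connected sets each containing the connected set $\Gamma$ or meeting it, and so is connected.

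For minimality, suppose $K\ssq\mathcal{C}^+(A)$ is an annular continuum with complementary components $\U^-(K)$ (unbounded below) and $\U^+(K)$ (unbounded above). Since $K\ssq\mathcal{C}^+(A)$, the connected sets $\U^{+-}(A)$ and $\U^{+-+}(A)$ avoid $K$, so $\U^{+-}(A)\ssq\U^-(K)$ and $\U^{+-+}(A)\ssq\U^+(K)$. Conversely, from $\U^+(A)\ssq\U^{+-+}(A)\ssq\U^+(K)$ together with $\partial\U^+(K)\ssq K$ one gets that $\U^-(K)$ is disjoint from $\mathrm{cl}(\U^+(A))$, so the connected, unbounded‑below set $\U^-(K)$ is contained in the unique unbounded‑below component $\U^{+-}(A)$; symmetrically $\U^+(K)\ssq\U^{+-+}(A)$. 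Thus $\U^{-}(K)=\U^{+-}(A)$ and $\U^{+}(K)=\U^{+-+}(A)$, whence $K=\A\smin(\U^-(K)\cup\U^+(K))=\mathcal{C}^+(A)$. Apart from the sphere/unicoherence input used for connectedness, everything reduces to bookkeeping with the push‑off lemma and the end‑strip observation; the only point that I expect to require real care is organising the iterated $\U^{\pm}$‑operations so that the boundedness and uniqueness properties are genuinely available at each step.
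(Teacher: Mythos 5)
Your argument is correct, but note that this paper does not actually prove Lemma~\ref{l.frontiers}: it is quoted from \cite{jaeger:2009b}, so there is no internal proof to compare against. Your write-up supplies a complete self-contained argument in the same elementary-topological spirit as the cited source: the bookkeeping of the iterated operations (each stage contains a full strip near one end, which gives both well-definedness and uniqueness of the relevant unbounded component) is right; the two push-off inclusions $\U^{+-}(A)\ssq\U^{+-+-}(A)$ and $\U^{+-+-}(A)\ssq\U^{+-}(A)$ do yield the stabilisation and $\partial\U^{+-}(A)=\partial\U^{+-+}(A)$; compactness and the two-component structure of the complement are immediate as you say; and the minimality argument correctly exploits that for a compact $K$ with exactly two complementary components the unbounded-above one is automatically bounded below, so that the four inclusions force $\U^{\pm}(K)=\U^{+-(+)}(A)$. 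The only step resting on an external input is connectedness of $\Gamma=\partial V^-$ via the sphere compactification; there you should make explicit (or cite) that the complement of a complementary component of a continuum in $S^2$ is connected — it is $K^+$ together with the closures of the other components, each of which meets $K^+$ by your own push-off lemma — since that is exactly what legitimises the unicoherence (or ``frontier of a simply connected domain is connected'') step. With that sentence added, the proof is complete.
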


We call $\mathcal{C}^+(A)$ the {\em upper frontier} of $A$, and similarly one
can define a {\em lower frontier} $\mathcal{C}^-(A)$.

\begin{lema} \label{l.boundary-frontier}
Under the assumptions of Lemma~\ref{l.frontiers}, we have that
\begin{equation}
  \partial\mathcal{C}^+(A) \ \ssq \ \partial A.
\end{equation}
In particular, any essential continuum in $\A$ contains the boundary
of an essential circloid.
\end{lema}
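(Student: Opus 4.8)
The plan is to show $\partial\mathcal{C}^+(A)\ssq\partial A$ by a boundary-tracking argument, exploiting the explicit description $\mathcal{C}^+(A) = \A\smin(\U^{+-}(A)\cup\U^{+-+}(A))$ from Lemma~\ref{l.frontiers}. First I would take a point $z\in\partial\mathcal{C}^+(A)$ and argue that $z\notin\inte(A)$: indeed, since $A\ssq\mathcal{C}^+(A)$ (because $A$ meets neither the component $\U^{+-}(A)$, which is unbounded below by construction of the upper frontier, nor $\U^{+-+}(A)$, which is unbounded above, as $A$ is bounded above), any point of the open set $\inte(A)$ lies in the interior of $\mathcal{C}^+(A)$ and hence not on its boundary. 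So it remains to rule out $z\notin\cl[A]$ entirely, i.e. to show $z\in\cl[A]\smin\inte(A)\ssq\partial A$.

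Next I would suppose, for a contradiction, that $z\in\partial\mathcal{C}^+(A)$ but $z\notin\cl[A]$. Then $z$ lies in the open set $\A\smin\cl[A]$, so it belongs to some connected component $W$ of $\A\smin\cl[A]$. Since $z\in\partial\mathcal{C}^+(A)$ and $\mathcal{C}^+(A)\supseteq A$, the point $z$ is not in the interior of $\mathcal{C}^+(A)$, which forces $z$ — and hence all of $W$, since $W$ is connected and disjoint from $A$ — to have a nonempty intersection with the complement $\U^{+-}(A)\cup\U^{+-+}(A)$. But $W$, being connected and disjoint from $\cl[A]$, is contained in a single component of $\A\smin\cl[A]$; one then checks that a whole component of $\A\smin\cl[A]$ is either contained in $\U^{+-}(A)\cup\U^{+-+}(A)$ or disjoint from it, using that $\U^{+-}(A)$ and $\U^{+-+}(A)$ are themselves unions of components of complements built from $\cl[A]$ (this is where the $\U$-operations' definitions are used, together with the stabilisation identity $\U^{+-+-}(A)=\U^{+-}(A)$). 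In the first case $z\in\inte(\U^{+-}(A)\cup\U^{+-+}(A)) = \inte(\A\smin\mathcal{C}^+(A))$, contradicting $z\in\partial\mathcal{C}^+(A)$; in the second case a neighbourhood of $z$ lies in $\mathcal{C}^+(A)$, again contradicting $z\in\partial\mathcal{C}^+(A)$. Either way we reach a contradiction, so $z\in\cl[A]$ and thus $z\in\partial A$.

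For the final sentence, given an essential continuum $K\ssq\A$, I would note that $K$ is in particular essential in the sense required by Lemma~\ref{l.frontiers} and, being compact, is bounded above. Applying Lemma~\ref{l.frontiers} produces the circloid $\mathcal{C}^+(K)$, which is essential since $K$ is, and the inclusion just proved gives $\partial\mathcal{C}^+(K)\ssq\partial K\ssq K$. As $\mathcal{C}^+(K)$ is a circloid its boundary $\partial\mathcal{C}^+(K)$ is a (nonempty) essential continuum contained in $K$, and — taking the boundary of a circloid, which one checks is again the boundary of a circloid, or simply observing $\partial\mathcal{C}^+(K)\supseteq\partial\mathcal{C}^+(\mathcal{C}^+(K))$ — we obtain the boundary of an essential circloid inside $K$, as claimed.

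The main obstacle I anticipate is the bookkeeping in the second paragraph: carefully justifying that each connected component of $\A\smin\cl[A]$ is \emph{either} entirely inside $\U^{+-}(A)\cup\U^{+-+}(A)$ \emph{or} entirely outside it. This rests on unwinding the definitions of the nested $\U^{\pm}$-operations and on the stabilisation $\U^{+-+-}(A)=\U^{+-}(A)$ from Lemma~\ref{l.frontiers}; the topology of the annulus (only two ``ends'') is what makes this dichotomy hold, and getting the separation argument clean — rather than appealing vaguely to ``connectedness'' — is the delicate point. Everything else is a routine consequence of $A\ssq\mathcal{C}^+(A)$ and the openness of the sets removed to form $\mathcal{C}^+(A)$.
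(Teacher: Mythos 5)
Your argument rests on two assertions that are false in general, so there is a genuine gap. First, $A\ssq\mathcal{C}^+(A)$ fails: take $\A=\kreis\times\R$ and $A=(\kreis\times\{0\})\cup(\{x_0\}\times[-1,0])$, a circle with a hair hanging downwards. Then $\U^+(A)=\kreis\times(0,\infty)$ and $\U^{+-}(A)=\kreis\times(-\infty,0)$, which contains the hair minus its endpoint, so $A$ meets $\U^{+-}(A)$ and $\mathcal{C}^+(A)=\kreis\times\{0\}$ does not contain $A$ (attaching the hair upwards instead gives $A\cap\U^{+-+}(A)\neq\emptyset$, so boundedness above does not save the second half of your claim either). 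This destroys your first paragraph: without $A\ssq\mathcal{C}^+(A)$ you have no argument that $\partial\mathcal{C}^+(A)$ misses $\inte(A)$. Second, $\U^{+-}(A)$ is a component of $\A\smin\cl[\U^+(A)]$, not of $\A\smin\cl[A]$; in the example above it strictly contains a component of $\A\smin\cl[A]$ together with points of $A$, so it is not a ``union of components of complements built from $\cl[A]$'' in the sense you invoke to justify your dichotomy. The dichotomy itself is true, but for a different reason, which you never supply.

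The missing ingredient is exactly the observation the paper's proof begins with: if $S\ssq\A$ is essential and bounded above then $\partial\U^+(S)\ssq\partial S$ (similarly for $\U^-$), whence $\partial\U^{+-+}(A)\ssq\partial\U^{+-}(A)\ssq\partial\U^+(A)\ssq\partial A$. This chain is what validates your dichotomy (a connected set disjoint from $\cl[A]$ cannot meet $\partial\U^{+-}(A)$ or $\partial\U^{+-+}(A)$, hence is contained in or disjoint from each of these open sets) and also handles the interior case your first paragraph cannot; in fact, once the chain is available the component-tracking becomes superfluous, since $\partial\mathcal{C}^+(A)=\partial(\U^{+-}(A)\cup\U^{+-+}(A))\ssq\partial\U^{+-}(A)\cup\partial\U^{+-+}(A)\ssq\partial A$. (The paper argues slightly differently, passing through the essential continuum $E^+=\partial\U^{+-+}(A)$ and the minimality of the circloid, but the boundary chain is the heart of the matter.) Your treatment of the ``in particular'' statement is essentially fine once the inclusion is known: $\mathcal{C}^+(K)$ is already an essential circloid with $\partial\mathcal{C}^+(K)\ssq\partial K\ssq K$, and the extra remark about boundaries of boundaries of circloids is unnecessary.
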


\begin{proof}
  In general, when $S\ssq\A$ is essential and bounded above we have
  $\partial \U^+(S)\ssq\partial S$, and the analogous statement holds
  if $S$ is bounded below. Applying this several times, we obtain
\begin{equation}
\partial \U^{+-+}(A) \ \ssq\  \partial \U^{+-}(A) \ \ssq \  \partial \U^+(A) \ \ssq \ \partial A.
\end{equation}
Furthermore, $E^+=\partial \U^{+-+}(A)$ is an essential continuum
which is disjoint from $\U^{+-}(A)$ and $\U^{+-+}(A)$ and therefore
contained in $\mathcal{C}^+(A)$. Consequently
$C^+=\A\smin(\U^-(E^+)\cup \U^+(E^+))$ is an annular continuum
contained in $\Ccal^+(A)$, and by minimality of the latter we obtain
$C^+=\Ccal^+(A)$. However, this means that
\begin{eqnarray*}
\partial\Ccal^+(A) & = & \partial C^+ \ = \ \partial (\A\smin\partial C^+) \\ & = &
\partial(\U^-(E^+)\cup\U^+(E^+))) \ \ssq \ \partial E^+ \ = \ \partial
\U^{+-+}(A) \ \ssq \ \partial A,
\end{eqnarray*}
as required.
\end{proof}

Given two essential continua $E_1$ and $E_2$, we write $E_1\prec E_2$
if $E_1\ssq \mathcal{U}^-(E_2)$. We say that a sequence of essential
continua $\{E_n\}_{n\in\N}\subset\A$ is bounded if there exist two
essential continua $E,F\subset \A$ such that $E\prec E_n\prec F$ for
every $n\in\N$.

\begin{lema}
  \label{l.increasing-essentials-limit}
  Suppose $\{E_n\}_{n\in\N}\ssq\A$ is a bounded sequence of essential continua with $E_n\prec
  E_{n+1}$ for all $n\in\N$. Then the $E_n$ converge in Hausdorff limit to the
  essential continuum $\partial \U^-$, where $\U^-=\bigcup_{n\in\N} \U^-(E_n)$.
\end{lema}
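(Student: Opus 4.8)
The plan is to confine all the $E_n$ to a fixed compact sub-annulus, so that $(E_n)$ is precompact for the Hausdorff distance and every accumulation point is a continuum, and then to pin down the limit by a two-sided estimate against $\partial\U^-$. First I would record that $E\prec E_n\prec F$ forces $E_n\ssq\U^+(E)\cap\U^-(F)$; since $E$ and $F$ are essential, $\U^+(E)$ is bounded below and $\U^-(F)$ is bounded above, so all $E_n$ lie in the compact set $K:=\cl(\U^+(E)\cap\U^-(F))$. Likewise $E_n\prec E_{n+1}$ gives $\U^-(E_n)\ssq\U^-(E_{n+1})$, whence $\U^-=\bigcup_{n}\U^-(E_n)$ is open, connected, unbounded below and (being contained in $\U^-(F)$) bounded above. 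In particular $\partial\U^-=\cl(\U^-)\smin\U^-$ is non-empty, is contained in a compact horizontal band (it avoids the deep-below region $\U^-(E_1)$ and lies below $\U^-(F)$), hence is compact, and it separates $\U^-$ from a region containing points unbounded above, hence is essential.

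Next I would prove the two halves of the Hausdorff estimate. For the inclusion ``$\partial\U^-$ is asymptotically covered by $E_n$'': fix $x\in\partial\U^-$ and $\eps>0$ small. Since $x\in\cl(\U^-)$ there is $y\in\U^-\cap B_{\eps/2}(x)$, say $y\in\U^-(E_m)$; by monotonicity $y\in\U^-(E_n)$ for all $n\ge m$, while $x\notin\U^-\supseteq\U^-(E_n)$. If $x\in E_n$ we are done; otherwise the short segment from $y$ to $x$ lies in $B_\eps(x)$, starts in the component $\U^-(E_n)$ of $\A\smin E_n$ and ends outside it, hence meets $\partial\U^-(E_n)\ssq E_n$. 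So $E_n\cap B_\eps(x)\neq\emptyset$ for $n\ge m$, and covering the compact set $\partial\U^-$ by finitely many $(\eps/2)$-balls gives $\sup_{x\in\partial\U^-}d_{\mathcal H}(x,E_n)\to0$. For the reverse inclusion, suppose $\sup_{x\in E_n}d_{\mathcal H}(x,\partial\U^-)\not\to0$; then there are $\eps_0>0$, $n_k\nearrow\infty$ and $x_{n_k}\in E_{n_k}$ with $d(x_{n_k},\partial\U^-)\ge\eps_0$, and since $E_{n_k}\ssq K$ we may assume $x_{n_k}\to x_*\notin\partial\U^-$. If $x_*\in\U^-$ then $x_*\in\U^-(E_m)$ for some $m$, so $x_{n_k}\in\U^-(E_m)\ssq\U^-(E_{n_k})$ for all large $k$ with $n_k>m$, contradicting $x_{n_k}\in E_{n_k}$. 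If $x_*\notin\cl(\U^-)$ then $x_{n_k}\notin\cl(\U^-)$ for large $k$, contradicting $E_{n_k}\ssq\U^-(E_{n_k+1})\ssq\U^-$ (valid since $E_{n_k}\prec E_{n_k+1}$). Thus $\sup_{x\in E_n}d_{\mathcal H}(x,\partial\U^-)\to0$, and combining the two estimates gives $E_n\rightarrow_{\mathcal H}\partial\U^-$; as a Hausdorff limit of connected compact sets, $\partial\U^-$ is indeed a continuum.

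The one genuinely delicate point is the monotonicity $E_n\prec E_{n+1}\Rightarrow\U^-(E_n)\ssq\U^-(E_{n+1})$, together with the companion inclusions $\U^-(E_n)\ssq\U^-(F)$ and $E_n\ssq\U^+(E)$ used at the outset: these encode the intuitively obvious statement that for disjoint essential continua ``$A$ lies below $B$'' is equivalent to ``$B$ lies above $A$'' and is transitive, but a rigorous proof (for possibly wild continua) needs the standard separation theory of essential continua in $\A$ --- one shows that a disjoint essential continuum cannot sit in a bounded complementary component, so it lies in $\U^-$ or in $\U^+$, and one rules out the symmetric configuration ``$A$ below $B$ and $B$ below $A$'' using that $\U^-(A)$ is bounded above. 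Alternatively, one may replace each $E_n$ by its lower frontier circloid $\Ccal^-(E_n)$ from Lemma~\ref{l.frontiers}, whose nesting behaviour is transparent and which satisfies $\partial\Ccal^-(E_n)\ssq E_n$, so that it stays Hausdorff-close to $E_n$ inside the compact band $K$; once the lemma is proved for the $\Ccal^-(E_n)$ it transfers to the $E_n$. Everything else is routine point-set topology (Hausdorff limits of continua are continua; a connected component of an open set is relatively closed in it).
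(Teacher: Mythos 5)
Your proof is correct and takes essentially the same route as the paper's: both establish the two-sided Hausdorff estimate $\partial\U^-\ssq B_\eps(E_n)$ and $E_n\ssq B_\eps(\partial\U^-)$ from the monotone nesting of the sets $\U^-(E_n)$ together with compactness. The differences are only tactical: you prove the first inclusion directly (segment hitting $\partial\U^-(E_n)\ssq E_n$, then a finite subcover for uniformity) where the paper argues by contradiction via a ball trapped in $\U^+(E_n)$, and you make explicit the separation facts ($A\prec B\Rightarrow B\ssq\U^+(A)$, monotonicity of $\U^-(E_n)$) and the compactness, connectedness and essentialness of $\partial\U^-$, which the paper uses implicitly.
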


\begin{proof}
We have to show that
\begin{equation} \label{e.essential-limit} \forall \eps>0 \ \exists n_0\in\N \
  \forall n\geq n_0 \ : \quad \partial \U^- \ssq B_\eps(E_n) \textrm{ and } E_n
  \ssq B_\eps(\partial \U^-) \ .
\end{equation}
Note that for all $n\in\N$ the set $E_n$ is contained in $\U^-(E_k)$ for all
$k>n$. Conversely, $\partial \U^-\ssq\U^+(E_n)$ for all $n\in\N$,
since the bounded connected components of $\A\smin E_n$ are all contained in
$\U^-(E_{n+1})\ssq \U^-$ and can therefore not intersect $\partial \U^-$.

Now, first assume that there exist infinitely many $n\in\N$ with $\partial \U^-
\nsubseteq B_\eps(E_n)$. Choose a sequence $n_i\nearrow \infty$ and
$z_i\in\partial \U^-$ with $z_i\notin B_\eps(E_{n_i})$. Note that this implies
$z_i\notin B_\eps(E_{n_j})$ for all $j\leq i$, since the straight arc from $z_i$
to the nearest point in $E_{n_j}$ first has to pass through $E_{n_i}$. By
compactness, we may assume that the limit $z=\iLim z_i\in\partial \U^-$
exists. Then $B_\eps(z)\cap E_n=\emptyset$ for all $n\in\N$.  Obviously
$B_\eps(z)$ cannot be contained in $\U^-(E_n)$ for any $n\in\N$. Further,
$B_\eps(z)$ can also not be contained in a bounded component of $\A\smin E_n$,
since it would then be contained in $\U^-(E_{n+1})$. Consequently $B_\eps(z)\ssq
\U^+(E_n)$ for all $n\in\N$. However, this means that $\U^-$ does not intersect
$B_\eps(z)$, contradicting $z\in\partial \U^-$.

Conversely, suppose $E_n\nsubseteq B_\eps(\partial \U^-)$ for
infinitely many $n\in\N$. Choose $n_i\nearrow\infty$ and $z_i\in
E_n\smin B_\eps(\partial \U^-)$ so that the limit $z=\iLim z_i$
exists. Then on the one hand we have $z\notin B_{\eps/2}(\partial
\U^-)$, but on the other hand $z$ is a limit point of points $z_i\in
E_n\ssq \U^-(E_{n+1})\ssq \U^-$, a contradiction.  This shows that
(\ref{e.essential-limit}) holds and thus
$\lim^\mathcal{H}_{n\to\infty} E_n=\partial \U^-$ as claimed.
\end{proof}

We now turn to minimal sets of type 2, starting with a simple
observation.

\begin{lema}\label{l.eigenvector-of-homology}
  Suppose \M\ is a minimal set of $f\in\homeo(\torus)$ and $\M^c$ contains an
  essential annulus $\Acal$ of homotopy type $(p,q)$.  Then $(p,q)$ is an
  eigenvector of the induced action $f_*$ on homotopy.
\end{lema}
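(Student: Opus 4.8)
The plan is to use that a minimal set is $f$-invariant, so that $f$ permutes the connected components of $\M^c$, and then to observe that the (necessarily primitive) homotopy type of an essential annular component transforms under $f$ exactly by the induced map $f_*$, while on the other hand two disjoint essential annuli in $\torus$ are forced to be parallel.

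First I would record that $f(\M)=\M$ yields $f(\M^c)=\M^c$, and since $f$ is a homeomorphism it induces a bijection of $\conn(\M^c)$ onto itself. In particular $f(\Acal)$ is again a connected component of $\M^c$; being homeomorphic to $\Acal$ it is an embedded open annulus, and whenever $\gamma\ssq\Acal$ is an essential loop it contains the loop $f(\gamma)$ with $[f(\gamma)]=f_*[\gamma]\neq 0$, because $f_*\in\mathrm{GL}(2,\Z)$. Hence $f(\Acal)$ is an essential annulus. (By Proposition~\ref{p.classification} this already forces $\M$ to be of type $2$, though the argument does not really need this.)

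Next I would identify the homotopy type of $f(\Acal)$. Writing $G(U)\le\pi_1(\torus)\cong\Z^2$ for the subgroup generated by the loops contained in a domain $U$, one has $G(f(U))=f_*(G(U))$, since $f$ and $f^{-1}$ carry loops to loops. As $\Acal$ is $(p,q)$-essential we have $G(\Acal)=\Z\cdot(p,q)$, hence $G(f(\Acal))=\Z\cdot f_*(p,q)$; and because $(p,q)$ is primitive and $f_*$ is an automorphism of $\Z^2$, the vector $f_*(p,q)$ is again primitive, so $f(\Acal)$ is $f_*(p,q)$-essential. Finally I would compare the two essential components $\Acal$ and $f(\Acal)$ of $\M^c$: they are disjoint or equal, so one may choose core curves of $\Acal$ and of $f(\Acal)$ that are disjoint (or identical) essential simple closed curves on $\torus$; two such curves have vanishing algebraic intersection number, hence represent the same primitive homology class up to sign. (Alternatively, this is exactly the statement contained in the proof of Proposition~\ref{p.classification} that all essential components of $\M^c$ share a single homotopy type.) Therefore $f_*(p,q)=\pm(p,q)$, i.e. $(p,q)$ is an eigenvector of $f_*$, with eigenvalue $\pm1$.

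There is no substantial obstacle here. The only points deserving a little care are that the homotopy type of an annulus is defined only up to sign --- so what one actually proves is that $(p,q)$ is an eigenvector with eigenvalue $\pm1$, not necessarily $+1$ --- and that before comparing homotopy types one should check that $f(\Acal)$ is genuinely a connected component of $\M^c$, which is immediate from the invariance of $\M$ and the bijectivity of $f$ on components.
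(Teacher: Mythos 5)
Your proof is correct and follows essentially the same route as the paper: the paper's (two-line) argument also splits into the cases $f(\Acal)=\Acal$ and $f(\Acal)\cap\Acal=\emptyset$, and in the disjoint case invokes the fact that essential annuli of different homotopy types must intersect, which is exactly the intersection-number observation you spell out for the core curves. Your added remarks (that $f(\Acal)$ is again a component of $\M^c$, and that the eigenvalue is $\pm1$) are accurate elaborations of the same argument rather than a different approach.
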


In fact, the assertion of the lemma is true for any annulus which is
either invariant or disjoint from its image.

\begin{proof}[Proof of Lemma~\ref{l.eigenvector-of-homology}]
  When $\Acal$ is invariant, then the fact that its homotopy vector is
  preserved is obvious. When $f(\Acal)$ and $\Acal$ are disjoint, this
  follows from the fact that essential annuli of different homotopy
  types have to intersect.
\end{proof}

Now, choose $A\in\textrm{SL}(2,\Z)$ such that $(p,q)=A\cdot(1,0)^t$
and let $f_A$ be the torus homeomorphism induced by $A$. Then $(1,0)$
is an eigenvector of the action on homotopy of
\begin{equation}\label{e.fhat}
  \widehat f\ =\ f_A^{-1}\circ
  f\circ f_A \ ,
\end{equation}
and this implies that there exists a lift $\widetilde f : \A\to\A$
which projects to $\widehat f$ under the canonical projection $\pi_\A
: \A \to \torus$. We either have $\widetilde f(z+(0,1))=\widetilde
f(z)+(0,1)$ or $\widetilde f(z+(0,1))=\widetilde f(z)-(0,1)$.  We call
$\widetilde f$ {\em order-preserving} in the first case and {\em
  order-reversing} in the second. When $\widetilde f$ is
order-preserving, we define the {\em rotation interval of $f$
  orthogonal to $(p,q)$} by

\begin{equation}
  \label{e.rot-interval}
  \rho_{(p,q)}(f) \ = \ \frac{1}{\|(p,q)\|_2}\cdot\left\{
    \rho\in\R \ \left| \  \exists n_i\nearrow\infty,\
      z_i\in\A : \iLim \left(\pi_2\circ \widetilde f^{n_i}(z_i)-\pi_2(z_i)\right)/n_i
      = \rho \right.\right\} \ .
\end{equation}
Of course, due to the freedom in the choice of the lift $\widetilde f$
the interval $\rho_{(p,q)}$ is only well-defined up to translation by
integer multiples of $\|(p,q)\|_2$, and we will implicitly understand
it in this sense. Note that when $f$ is homotopic to the identity,
then $\rho_{(1,0)}(\widetilde f)$ is just the projection of $\rho(F)$
to the second coordinate. In general, it is the projection of
$\rho(F)$ to the line $(-q,p)\cdot \Z$.

In the order-reversing case, we apply the above definition to $f^2$
and let $\rho_{(p,q)}(f) = \rho_{(p,q)}(f^2)/2$. In this case, we have
\begin{lema}
  If $\widetilde f$ is order-reversing, then $\rho_{(p,q)}(f)$ contains $0$.
\end{lema}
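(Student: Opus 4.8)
The plan is to reduce the statement to a simple symmetry property of the rotation interval $\rho_{(p,q)}(f^2)$. The point is that since $\widetilde f$ is order-reversing, $\widetilde f(z+(0,1))=\widetilde f(z)-(0,1)$, so under the vertical deck translation $T(z)=z+(0,1)$ the map $\pi_2\circ\widetilde f$ decreases by $1$ while $\pi_2$ increases by $1$. Hence the \emph{sum} $\psi\colon\A\to\R$, $\psi(z):=\pi_2(\widetilde f(z))+\pi_2(z)$, is $T$-invariant (this is the one place where order-reversal, as opposed to order-preservation, is really used: for an order-preserving lift it would instead be the usual displacement $\pi_2\circ\widetilde f-\pi_2$ that is invariant). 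Consequently $\psi$ descends to a continuous function on the compact torus $\torus$, and so $C:=\sup_{\A}|\psi|<\infty$.

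Next I would rewrite the definition of $\psi$ as $\pi_2\circ\widetilde f=\psi-\pi_2$ and apply it twice. This gives $\pi_2(\widetilde f^2(z))-\pi_2(z)=\psi(\widetilde f(z))-\psi(z)$ for every $z$, so a single step of $\widetilde f^2$ moves $\pi_2$ by at most $2C$; hence the quotients $\tfrac1n\big(\pi_2(\widetilde f^{2n}(z))-\pi_2(z)\big)$ stay bounded, and $\rho_{(p,q)}(f^2)$ is non-empty. The main step is then to prove that $\rho_{(p,q)}(f^2)$ is symmetric about $0$. If $\rho\in\rho_{(p,q)}(f^2)$ is realized by $(z_i,n_i)$, put $w_i:=\widetilde f(z_i)$, so $\widetilde f^{2n_i}(w_i)=\widetilde f(\widetilde f^{2n_i}(z_i))$, and apply $\pi_2\circ\widetilde f=\psi-\pi_2$ once more to get
$$\pi_2(\widetilde f^{2n_i}(w_i))-\pi_2(w_i)=\big(\psi(\widetilde f^{2n_i}(z_i))-\psi(z_i)\big)-\big(\pi_2(\widetilde f^{2n_i}(z_i))-\pi_2(z_i)\big).$$
The first bracket is bounded in absolute value by $2C$, hence $o(n_i)$ after division, while the second divided by $n_i$ converges to $\|(p,q)\|_2\,\rho$; therefore $\tfrac1{n_i}\big(\pi_2(\widetilde f^{2n_i}(w_i))-\pi_2(w_i)\big)\to-\|(p,q)\|_2\,\rho$, i.e. $-\rho\in\rho_{(p,q)}(f^2)$.

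To finish, I would invoke that $\rho_{(p,q)}(f^2)$ is a compact interval — the usual fact that rotation sets of annulus homeomorphisms (here of the order-preserving annulus homeomorphism $\widetilde f^2$) are intervals, which is already implicit in calling $\rho_{(p,q)}$ a rotation interval. A non-empty interval symmetric about the origin contains the origin, so $0\in\rho_{(p,q)}(f^2)$, and hence $0\in\rho_{(p,q)}(f)=\tfrac12\rho_{(p,q)}(f^2)$.

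The genuinely delicate points are only bookkeeping ones: keeping the sign conventions in the order-reversing relation straight when passing between $\widetilde f$, $\widetilde f^2$ and their lifts, and checking that $\psi$ is indeed deck-invariant. If one prefers not to cite the interval property, an alternative for the last step is to note that $\pi_2(\widetilde f^{2n}(z))-\pi_2(z)=\sum_{m=0}^{n-1}\widehat\phi(\widehat f^{2m}(\pi_\A(z)))$, where $\widehat\phi:=\widehat\psi\circ\widehat f-\widehat\psi$ is a continuous coboundary on $\torus$; then $\int\widehat\phi\,d\mu=0$ for any $\widehat f$-invariant probability measure $\mu$ (which is automatically $\widehat f^2$-invariant), and $0$ lies in $\rho_{(p,q)}(f^2)$ because, up to the normalization $\|(p,q)\|_2$, this set is exactly the range of $\nu\mapsto\int\widehat\phi\,d\nu$ over $\widehat f^2$-invariant measures.
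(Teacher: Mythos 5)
Your proposal is correct, but it follows a genuinely different route from the paper. The paper's proof is a short direct intermediate-value argument: for odd $n$ the lift $\widetilde f^{\,n}$ anti-commutes with the deck translation $z\mapsto z+(0,1)$, so $D(k)=\pi_2\circ\widetilde f^{\,n}(0,k)-k$ tends to $\mp\infty$ as $k\to\pm\infty$, and hence any arc joining $(0,-k)$ to $(0,k)$ (for $k$ large) contains a point of zero vertical displacement for $\widetilde f^{\,n}$; this is done for every odd $n$. You instead exploit the same anti-commutation relation to produce the deck-invariant, hence bounded, function $\psi=\pi_2\circ\widetilde f+\pi_2$, deduce that the displacement of $\widetilde f^{\,2}$ equals $\psi\circ\widetilde f-\psi$ (so $\rho_{(p,q)}(f^2)$ is nonempty and bounded), prove symmetry of $\rho_{(p,q)}(f^2)$ about $0$ via the substitution $w_i=\widetilde f(z_i)$, and conclude from connectedness of the rotation interval, or alternatively from the fact that the displacement descends to the $\widehat f$-coboundary $\widehat\psi\circ\widehat f-\widehat\psi$ together with the invariant-measure characterization of the rotation set. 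Your computations (deck-invariance of $\psi$, the identity for the $\widetilde f^{\,2}$-displacement, and the symmetry estimate) are all correct, and your use of $\widetilde f^{\,2}$ meshes cleanly with the paper's definition $\rho_{(p,q)}(f)=\rho_{(p,q)}(f^2)/2$ in the order-reversing case. The trade-off: your argument imports one extra standard ingredient --- that $\rho_{(p,q)}(f^2)$ is an interval (equivalently, coincides with the range of integrals of the displacement over invariant measures, in the style of Misiurewicz--Ziemian) --- which the lemma's own proof avoids, although the paper itself invokes exactly this connectedness, together with the symmetry you prove, in the sentence immediately following the lemma, so there is no circularity. In exchange your route yields more than the lemma states (the full symmetry of $\rho_{(p,q)}(f^2)$ about $0$), whereas the paper's IVT proof is more elementary but leaves implicit the bookkeeping that converts zero displacement at odd times of $\widetilde f$ into membership of $0$ in the rotation interval defined through $\widetilde f^{\,2}$.
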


\begin{proof} For every $n\in 2\Z+1$ the map $\widetilde f^n$ reverses
  orientation, so that $D(k) = \pi_2\circ \widetilde f^n(0,k)-k$
  goes to $\pm\infty$ as $k$ goes to $\mp\infty$. Consequently, for
  sufficiently large $k$ the numbers $D(k)$ and $D(-k)$ have opposite
  sign. Therefore, by the Intermediate Value Theorem any arc joining
  $(0,k)$ to $(0,-k)$ contains a point with $\pi_2\circ \widetilde
  \pi_2\circ f^n(z)-\pi_2(z)=0$.
\end{proof}

In the same way, it is shown that $\rho_{(p,q)}(f)$ is
connected and, in the order-reversing case, symmetric around 0. In the
situation we consider, the rotation interval is degenerate.
\begin{lema}\label{l.rational}
  \label{l.degenerate-rotinterval} Let $f\in\homeo(\torus)$ and
  suppose there exists an annulus $\Acal\ssq\torus$ of homotopy type
  $(p,q)$ which is either periodic or wandering. Then
  $\rho_{(p,q)}(f)$ contains a single number. If $\Acal$ is periodic,
  this number is rational.

  In particular, suppose $f\in\homtwo$ has a periodic or wandering
  annulus.  Then the rotation set $\rho(F)$ is contained in a rational
  line.
\end{lema}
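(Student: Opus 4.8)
The plan is to prove both the case-by-case dichotomy on $\rho_{(p,q)}(f)$ and the global consequence for $\rho(F)$. After conjugating by $f_A$ as in \eqref{e.fhat} we may assume $(p,q)=(1,0)$, so that $\widetilde f:\A\to\A$ is a lift of $\widehat f$ acting on the infinite annulus, and the annulus $\Acal$ becomes an annulus of homotopy type $(1,0)$; replacing $f$ by $f^2$ if necessary we may also assume $\widetilde f$ is order-preserving (this only scales $\rho_{(p,q)}$ by $1/2$, which does not affect whether it is a single point, nor rationality). Under this normalisation $\rho_{(1,0)}(f)$ is, up to the translation ambiguity, the closed interval of vertical rotation numbers defined in \eqref{e.rot-interval}, which as remarked above is connected. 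So the content of the first assertion is that this interval is \emph{degenerate}, and the content of the second is that it moreover sits at a rational point when $\Acal$ is periodic.

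\textbf{Using the annulus as a dynamical barrier.} The essential annulus $\Acal$ lifts to a $(1,0)$-periodic strip $\widetilde\Acal\ssq\A$; passing to the complement in $\A$, the two ends $\U^\pm(\widetilde\Acal)$ are $\widetilde f^p$-related (when $\Acal$ is $p$-periodic) or $\widetilde f$ maps $\widetilde\Acal$ off itself (wandering case). In the periodic case, $\widetilde f^p$ permutes $\widetilde\Acal$ and hence preserves the cyclic order of the ends; since $\widetilde\Acal$ projects to a genuine essential annulus in $\torus$, $\widetilde f^p(\widetilde\Acal)=\widetilde\Acal+(0,m)$ for some integer $m$, and I would show $m=0$ by the same argument that forces the homotopy type to be preserved (an essential annulus cannot be disjoint from a nonzero vertical translate of itself inside $\A$, as the translate would cross it; alternatively appeal to Lemma~\ref{l.eigenvector-of-homology}). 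Thus $\widetilde f^p$ fixes $\widetilde\Acal$ setwise, which pins the vertical displacement of any orbit that stays near $\widetilde\Acal$: a point on the boundary $\partial\widetilde\Acal$ has a bounded $\widetilde f^p$-orbit in the vertical coordinate, giving vertical rotation number $0$ for that point, hence the rational number $0$ (after undoing the $f\mapsto f^p$, $f\mapsto f^2$ and $f_A$-conjugacy bookkeeping, a rational number of the form $\mathrm{(integer)}/\|(p,q)\|_2$) lies in $\rho_{(p,q)}(f)$. In the wandering case the annuli $\widetilde f^{kp}(\widetilde\Acal)$ (all $k$) are pairwise disjoint essential strips, which therefore must be vertically ordered and march monotonically in one direction; this again confines displacement but need not give rationality, only degeneracy.

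\textbf{Degeneracy of the interval.} For the "single number" part, the key point is that an essential annulus $\Acal$, whether periodic or wandering, is an obstruction to having orbits with two distinct vertical speeds. Concretely, suppose $\rho_{(1,0)}(f)$ contains two distinct reals $\alpha<\beta$. By the structure of vertical rotation intervals for annulus maps (Poincaré–Birkhoff / the fact that $\rho_{(1,0)}$ is an interval realised by orbits at its endpoints), there are orbits realising speeds close to $\alpha$ and to $\beta$; but an orbit of positive relative vertical speed must cross every horizontal essential curve infinitely often in the upward direction, while one of smaller speed crosses at a different rate, and both must interact with the translates $\widetilde f^{kp}(\widetilde\Acal)$. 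If $\Acal$ is periodic, $\widetilde f^p$-invariance of $\widetilde\Acal$ means \emph{no} orbit can have nonzero vertical rotation number under $\widetilde f^p$ at all — an orbit passing from below $\widetilde\Acal$ to above it would have to cross $\widetilde\Acal$, but $\widetilde f^p(\widetilde\Acal)=\widetilde\Acal$ forbids net transport — so $\rho_{(1,0)}(f^p)=\{0\}$ and hence $\rho_{(1,0)}(f)$ is the single rational $0$ (up to the integer/$\|(p,q)\|_2$ ambiguity). If $\Acal$ is merely wandering, the disjoint strips $\{\widetilde f^{kp}(\widetilde\Acal)\}_{k\in\Z}$ are linearly ordered vertically; any orbit, being trapped in the complement of this bi-infinite family of barriers or forced to traverse them in lockstep, has its vertical displacement controlled so that all orbits share one rotation number. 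I expect the honest way to run this is: show that a wandering essential annulus forces $\widehat f$ to be semiconjugate (via the "filling" of the nested strips and collapsing each to a point) to an interval translation on $\R$, whence $\rho_{(1,0)}$ is a point; this dovetails with Addendum~\ref{a.type2}(ii) and is presumably how the paper proceeds.

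\textbf{Global consequence and the main obstacle.} For the final sentence: if $f\in\homtwo$ has a periodic or wandering annulus of homotopy type $(p,q)$, then by Lemma~\ref{l.eigenvector-of-homology} applied to the identity homotopy class, and by what we just proved, the projection of $\rho(F)$ onto the line $(-q,p)\cdot\Z$ is a single number $c$; that is exactly the statement that $\rho(F)$ is contained in the affine line $\{v\in\R^2:\langle v,(-q,p)\rangle=c\}$, and since $(p,q)\in\Z^2$ this line has rational slope — it is a rational line. (If one wants $c$ rational too, that is the periodic case.) The step I expect to be the genuine obstacle is the degeneracy argument in the \emph{wandering} case: unlike the periodic case, there is no single invariant barrier, so one must exploit the whole family of disjoint translates and argue that their existence is incompatible with a nondegenerate rotation interval — the natural mechanism being to build the semiconjugacy to a circle/interval rotation, which requires controlling how the wandering strips exhaust the annulus and verifying that collapsing them yields a well-defined monotone circle map. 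The periodic sub-case, by contrast, is short once one knows $\widetilde f^p(\widetilde\Acal)=\widetilde\Acal$.
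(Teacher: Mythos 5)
The decisive gap is the wandering case, which is the real content of the lemma and which you yourself flag as ``the genuine obstacle'' without closing it. Your text for this case stops at ``I expect the honest way to run this is \dots'' and proposes collapsing the wandering strips to get a semiconjugacy to an interval translation/irrational rotation. That is not a proof, and as a strategy it is backwards relative to the paper's architecture: the semiconjugacy statement is the separate Lemma~\ref{l.semi-conjugacy} (quoted from \cite{koropecki:2007}), which is invoked only \emph{after} one knows that $\rho_{(p,q)}(f)$ is a single irrational number, and it cannot hold at all when that number is rational --- a wandering essential annulus with vertical rotation number $0$ is perfectly possible (e.g.\ an annulus drifting towards an invariant essential circle), and such an $f$ admits no semiconjugacy to an irrational rotation. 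What is actually needed is a direct control of \emph{all} orbits (not just points of $\Acal$) using the vertically ordered bi-infinite family of disjoint lifted strips $\widetilde f^{\,n}(\widetilde\Acal)+(0,k)$, $n,k\in\Z$, together with an approximate additivity of the integers locating $\widetilde f^{\,n}(\widetilde\Acal)$ relative to the translates $\widetilde\Acal+(0,k)$; none of this appears in your argument. (For comparison: the paper does not prove the lemma either --- it states it as a direct corollary of Lemma~1.4 of \cite{koropecki:2007} and omits the proof --- so supplying this step is exactly what a self-contained proof would require.)

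There is also an error in your periodic case. You claim $\widetilde f^{\,p}(\widetilde\Acal)=\widetilde\Acal$ (i.e.\ $m=0$), justified by ``an essential annulus cannot be disjoint from a nonzero vertical translate of itself inside $\A$''. This is false: the lift components $\widetilde\Acal+(0,k)$ are pairwise disjoint by construction (they are distinct components of $\pi_\A^{-1}$ of the annulus), so disjointness from vertical translates is automatic and implies nothing about $m$. Concretely, for $f(x,y)=(x,y+\tfrac12)$ and $\Acal=\kreis\times(0,\tfrac14)$, the annulus is $2$-periodic, $\widetilde f^{\,2}(\widetilde\Acal)=\widetilde\Acal+(0,1)$, and the vertical rotation number is $\tfrac12$, not $0$. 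The lemma only asserts rationality, and your barrier mechanism does repair: since $\widetilde f^{\,p}$ shifts the whole family of strips, and hence the complementary regions between consecutive strips, by the constant vector $(0,m)$, every orbit's vertical displacement under $\widetilde f^{\,p}$ is $m$ up to a uniform constant, giving $\rho_{(p,q)}(f)=\{m/p\}$. But as written, both the claim $m=0$ and its justification are incorrect, and the subsequent assertion that the rotation number is $0$ in the periodic case is wrong.
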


This is a direct corollary to \cite[Lemma 1.4]{koropecki:2007}, and we
omit the simple proof. From now on, we identify $\rho_{(p,q)}(f)$ with
the unique real number $\rho$ it contains and call it the {\em
  rotation number of $f$ orthogonal to $(p,q)$}. The following lemma
deals with the case where this rotation number is irrational. We omit
the proof, which can be found in \cite{koropecki:2007}. The author
uses an additional minimality assumption, but this is actually not
needed. Alternatively, the result also follows from a minor
modification of \cite[Proof of Theorem~C]{jaeger:2009b}.

\begin{lema} \label{l.semi-conjugacy} Suppose $f\in\homeo(\torus)$ has a
  wandering annulus $\Acal$ of homotopy type $(p,q)$ and $\rho_{(p,q)}(f)$ is
  irrational. Then $f$ is semiconjugate to the corresponding irrational
  rotation on \kreis.
\end{lema}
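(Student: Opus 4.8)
The plan is to recover from the wandering annulus a circle homeomorphism of rotation number $\rho:=\rho_{(p,q)}(f)$ and then to invoke Poincar\'e's classification. We pass to the conjugate $\widehat f=f_A^{-1}\circ f\circ f_A$ of \eqref{e.fhat} and its lift $\widetilde f\colon\A\to\A$ to the annular cover $\pi_\A\colon\A\to\torus$; correspondingly $f_A^{-1}(\Acal)$ is a wandering annulus for $\widehat f$ of homotopy type $(1,0)$, which we keep calling $\Acal$. Since $\Acal$ is wandering, Lemma~\ref{l.degenerate-rotinterval} shows that $\rho_{(p,q)}(f)$ is a single number; in particular, were $\widetilde f$ order-reversing, the lemma just before Lemma~\ref{l.rational} would force this number to be $0$, contrary to our hypothesis. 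Hence $\widetilde f$ is order-preserving, i.e.\ it commutes with the deck transformation $T\colon(\theta,y)\mapsto(\theta,y+1)$ and preserves the two ends of $\A$.

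Next I would build a ``staircase''. Fix a connected component $\widetilde\Acal$ of $\pi_\A^{-1}(\Acal)$; as $\Acal$ is $(1,0)$-essential but not doubly essential, $\widetilde\Acal$ is an essential sub-annulus of $\A$ bounded above and below, and so is each $T^m\widetilde f^n(\widetilde\Acal)$. Because $\Acal$ is wandering, projecting to $\torus$ shows that the sets $T^m\widetilde f^n(\widetilde\Acal)$, $(n,m)\in\Z^2$, are pairwise disjoint, and since disjoint essential continua in $\A$ are comparable under $\prec$, the associated lower regions $\U^-(T^m\widetilde f^n(\widetilde\Acal))$ are totally ordered by inclusion. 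Replacing $\widetilde f$ by $T^k\circ\widetilde f$ for suitable $k$, we may assume $\widetilde f(\widetilde\Acal)$ lies strictly between $\widetilde\Acal$ and $T\widetilde\Acal$. Using this nested family of ``cuts'' together with the vertical boundedness of $\widetilde\Acal$ (which keeps the complementary gaps between consecutive cuts from being vertically unbounded), one constructs a continuous monotone map $\psi\colon\A\to\R$, constant on each cut and on each gap, with $\psi\circ T=\psi+1$; since $\widetilde f$ and $T$ commute and $\widetilde f$ permutes cuts and gaps, $\psi$ descends $\widetilde f$ to a lift $\widetilde g\colon\R\to\R$ of an orientation-preserving circle homeomorphism $g$ of $\kreis=\R/\Z$, i.e.\ $\psi\circ\widetilde f=\widetilde g\circ\psi$. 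Passing to the quotient, $\psi$ induces $\bar\psi\colon\torus\to\kreis$ with $\bar\psi\circ\widehat f=g\circ\bar\psi$.

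I would then identify $\rho(g)$ with $\rho$. Since $\pi_2-\psi$ is $T$-periodic it is bounded on $\A$; together with the standard estimate $|\widetilde g^n(t)-t-n\,\rho(g)|<1$ this gives, for every fixed $z\in\A$, $\pi_2(\widetilde f^n(z))=n\,\rho(g)+O(1)$ as $n\to\infty$, hence $\rho(g)=\lim_{n\to\infty}\pi_2(\widetilde f^n(z))/n$. As $\rho_{(p,q)}(f)$ is a single number this limit equals $\rho$ for every $z$, so $\rho(g)=\rho$ is irrational. By Poincar\'e's theorem $g$ is semiconjugate to the rigid rotation $R_\rho$: there is a continuous, degree-one, monotone surjection $h\colon\kreis\to\kreis$ with $h\circ g=R_\rho\circ h$. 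Then $\Phi:=h\circ\bar\psi\circ f_A^{-1}\colon\torus\to\kreis$ is continuous and satisfies $\Phi\circ f=R_\rho\circ\Phi$; its image is a closed $R_\rho$-invariant subset of the minimal system $(\kreis,R_\rho)$, hence equals $\kreis$, so $\Phi$ is the desired semiconjugacy.

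The main obstacle is the construction of $\psi$ (equivalently of the quotient $\bar\psi$): one must assemble the nested family of cuts into an honest monotone real-valued function, which requires controlling the complementary gaps of $\bigcup_{n\in\Z}\widetilde f^n(\widetilde\Acal)$ --- ruling out gaps that are vertically unbounded although adjacent to only finitely many cuts, and ensuring continuity of $\psi$ across the (possibly infinitely many) cuts that accumulate inside a single gap. This is exactly where the total $\prec$-ordering of the cuts and the vertical boundedness of $\widetilde\Acal$ are used, and the required Hausdorff-limit statements about nested essential continua in $\A$ are furnished by Lemmas~\ref{l.frontiers}--\ref{l.increasing-essentials-limit}. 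Everything else reduces to elementary plane topology in $\A$ and the classical Poincar\'e theory on the circle.
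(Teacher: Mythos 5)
First, a point of comparison: the paper does not prove Lemma~\ref{l.semi-conjugacy} at all --- it refers to \cite{koropecki:2007} and to a modification of the proof of Theorem~C in \cite{jaeger:2009b} --- so your sketch can only be measured against those arguments, whose overall strategy (collapsing the $\prec$-ordered family of lifted iterates $T^m\widetilde f^{\,n}(\widetilde\Acal)$ to a circle factor) it does share. Your preliminary reductions are correct: conjugating by $f_A$, excluding the order-reversing case via the unnumbered lemma preceding Lemma~\ref{l.rational}, the pairwise disjointness and $\prec$-comparability of the cuts, and the final rotation-number identification are fine. (The vertical boundedness of $\widetilde\Acal$, which you assert without argument, does need a short proof: if a lift component were unbounded above, an arc inside it joining the lifted core loop to a sufficiently high point would have to cross the translate of that loop in $T\widetilde\Acal$, contradicting disjointness from $T\widetilde\Acal$.)

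However, the decisive step --- the existence of a continuous $\psi\colon\A\to\R$ with $\psi\circ T=\psi+1$, constant on cuts and gaps, through which $\widetilde f$ descends --- is essentially the content of the lemma, and you defer it, claiming it is ``furnished'' by Lemmas~\ref{l.frontiers}--\ref{l.increasing-essentials-limit}. Those lemmas only give circloid frontiers and Hausdorff limits of monotone sequences of essential continua; they do not produce $\psi$. What is actually needed, and what your sketch never addresses, is the combinatorial compatibility between the $\prec$-order of the cuts and the linear order of the numbers $n\rho+m$: one must show that $T^m\widetilde f^{\,n}(\widetilde\Acal)\prec T^{m'}\widetilde f^{\,n'}(\widetilde\Acal)$ exactly when $n\rho+m<n'\rho+m'$ (proved by iterating as in Lemma~\ref{l.bounded-circloid} and comparing with the vertical rotation number). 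Only with this labelling does the natural definition $\psi(z)=\sup\{\,n\rho+m \mid z\in\U^+(T^m\widetilde f^{\,n}(\widetilde\Acal))\,\}$ become monotone and satisfy $\psi\circ T=\psi+1$ and $\psi\circ\widetilde f=\psi+\rho$ (which semiconjugates directly to the rigid rotation, making your detour through an abstract circle homeomorphism and Poincar\'e's theorem unnecessary); and even then continuity is a genuine issue --- a priori infinitely many cuts may share a common boundary point, where such a supremum can jump --- and handling this is exactly the part of \cite{koropecki:2007} and \cite[Proof of Theorem~C]{jaeger:2009b} that carries the weight. Note also that ``constant on each cut and each gap'' does not by itself guarantee that $\widetilde f$ descends: distinct cuts or gaps could share a $\psi$-value unless the values are assigned equivariantly, which is again what the $n\rho+m$ labelling provides. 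As it stands, the proposal organises the problem correctly but leaves its core unproved.
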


In order to treat the rational case, we first need some more information
concerning circloids.
\begin{lema} \label{l.bounded-circloid} Let $f\in\homeo(\torus)$ and
  suppose there exists a wandering circloid $C \ssq\torus$ of homotopy
  type $(p,q)$. Further, assume $\rho_{(p,q)}(f)=0$ and let
  $C_0\ssq\A$ be a lift of $f_A(C)$, where $A\in\textrm{SL}(2,\Z)$ is
  chosen as in (\ref{e.fhat}). Then $C_0-(0,1) \prec
  \widetilde f^{2n}(C_0)\prec C_0+(0,1)$ for all $n\in\N$. In
  particular, the orbit of $C_0$ under $\widetilde f$ is bounded.
\end{lema}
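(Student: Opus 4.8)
The goal is to show that the $\widetilde f$-orbit of the lift $C_0$ of $f_A(C)$ stays trapped between $C_0-(0,1)$ and $C_0+(0,1)$, which will follow once we control the even iterates $\widetilde f^{2n}(C_0)$ (since $\widetilde f$ may be order-reversing, and $\widehat f^2 = f_A^{-1}\circ f^2\circ f_A$ is the map whose lift behaves order-preservingly; note also that in the order-reversing case $\rho_{(p,q)}(f)=0$ already). I would first observe that since $C$ is a wandering circloid, $\widehat f^2(f_A(C)) = f_A(f^2(C))$ is disjoint from $f_A(C)$, hence each $\widetilde f^{2}(C_0)$ is disjoint from every integer vertical translate $C_0+(0,k)$: the images $C_0+(0,k)$ are the connected components of the lift of $f_A(C)$ to $\A$, and $\widetilde f^2$ permutes the components of the lift of $f_A(f^2(C))$, which is disjoint from $f_A(C)$. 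Since essential continua in $\A$ are linearly ordered by $\prec$, for each $n$ there is a unique integer $j_n$ with $C_0+(0,j_n) \prec \widetilde f^{2n}(C_0) \prec C_0+(0,j_n+1)$ (strict inequalities because the curves are disjoint), and $j_n$ is exactly the displacement we must pin down.

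The key step is to show $j_n \in \{-1,0\}$ for all $n$, equivalently $-1 \le j_n \le 0$; combined with $\widetilde f^{2n}(C_0) \prec C_0 + (0, j_n+1)$ this gives the claimed inclusion $C_0-(0,1)\prec \widetilde f^{2n}(C_0)\prec C_0+(0,1)$. To control $j_n$, I would use the hypothesis $\rho_{(p,q)}(f)=0$ together with the cocycle/subadditivity structure of these vertical displacements. Writing $m(n)$ for the largest integer with $C_0 + (0,m(n)) \preceq \widetilde f^{n}(C_0)$ (so $m(2n)=j_n$ up to a shift), the disjointness of $f_A(C)$ from all its iterates under $\widehat f$ forces a quasi-additivity: $|m(n+k) - m(n) - m(k)| \le 1$, because once $\widetilde f^n(C_0)$ lies between $C_0+(0,m(n))$ and $C_0+(0,m(n)+1)$, applying $\widetilde f^k$ and using that $\widetilde f^k$ maps the region between consecutive translates into a region of the same ``width'' (again by disjointness of iterates) shifts the index by $m(k)$ up to an error of $1$. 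Standard subadditive-sequence arguments then give $m(n)/n \to \rho_{(p,q)}(f) = 0$, and the quasi-additivity upgrades this to a uniform bound $|m(n)| \le C$ for some constant $C$; the delicate point — and the main obstacle — is pushing the uniform bound all the way down to the optimal constant so that $m(2n)\in\{-1,0\}$ rather than merely bounded. This can be arranged by exploiting that $\widetilde f^2$ (order-preserving in all cases) has rotation number $0$ exactly, so that it cannot move $C_0$ strictly past a single translate: if $C_0+(0,1)\preceq \widetilde f^{2n}(C_0)$ for some $n$, then by the quasi-additivity $m(2nk)\ge k - (k-1) = 1$ for all $k$, still positive but not yet contradictory — here one instead invokes that the wandering hypothesis prevents $\widetilde f^{2}(C_0)$ and $C_0$ from being ``nested in the same direction forever'', or more cleanly, one uses that $\rho_{(p,q)}=0$ combined with the fact that $m(2n)$ is itself (quasi-)monotone to conclude it is eventually constant, and then that constant must be $0$ by running the argument forward and backward with $\widetilde f^{-2}$.

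**Alternative, cleaner route.** Rather than the subadditive bookkeeping, I would prefer to argue directly with the frontier operators from Lemma~\ref{l.frontiers} and Lemma~\ref{l.increasing-essentials-limit}. Suppose for contradiction that $\widetilde f^{2n_0}(C_0)\succeq C_0+(0,1)$ for some $n_0$. Set $g=\widetilde f^{2n_0}$, which is order-preserving, commutes with $z\mapsto z+(0,1)$, and projects to a torus homeomorphism for which $C$ (pushed down) is still wandering with vanishing orthogonal rotation number. The sets $g^k(C_0)$ are pairwise disjoint essential continua with $g^k(C_0)\succeq C_0+(0,k)$ for all $k\ge 0$ (by iterating and using order-preservation together with disjointness from translates), so their ``lower frontiers'' form an increasing unbounded sequence; by Lemma~\ref{l.increasing-essentials-limit} this forces points whose vertical displacement under $g^k$ grows at least linearly, giving $\rho_{(p,q)}(\widetilde f) \ge 1/(2n_0) > 0$, contradicting $\rho_{(p,q)}(f)=0$. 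The symmetric argument with $\widetilde f^{-2}$ rules out $\widetilde f^{2n}(C_0)\preceq C_0-(0,1)$. Hence $C_0-(0,1)\prec \widetilde f^{2n}(C_0)\prec C_0+(0,1)$ for all $n\in\N$, and since $\{\widetilde f^{2n+1}(C_0)\}$ is the $\widetilde f$-image of a bounded family it is bounded as well, so the full orbit of $C_0$ under $\widetilde f$ is bounded. The main obstacle in this route is making precise the claim ``$g^k(C_0)\succeq C_0+(0,k)$'', i.e.\ that the vertical displacement accumulates rather than oscillates — this is exactly where one needs that the $g^k(C_0)$ are mutually disjoint (wandering!) and that $g$ preserves the cyclic order, so a single step $C_0+(0,1)\preceq g(C_0)$ propagates by monotonicity to $C_0+(0,k)\preceq g^k(C_0)$.
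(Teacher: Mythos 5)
Your preferred ``cleaner route'' is essentially the paper's own proof: assume some even iterate of $C_0$ passes a vertical translate, propagate this by order-preservation of $\widetilde f^2$ and disjointness of the wandering iterates to get $\widetilde f^{2n_0k}(C_0)\succeq C_0+(0,k)$, and conclude a strictly positive rotation number, contradicting $\rho_{(p,q)}(f)=0$ (the detour through Lemma~\ref{l.increasing-essentials-limit} and the frontier operators is unnecessary, since the linear drift of $C_0$ already yields the displacement bound directly, but it is harmless). The first subadditive-bookkeeping route, whose gaps you acknowledge, is not needed.
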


\begin{proof}
  Since $C$ is wandering, $\widetilde f^{2n}(C)$ is disjoint from
  $C_0+(0,1)\cdot\Z$ for all $n\neq 0$. Suppose for a contradiction
  that $\widetilde f^{2n}(C_0)$ does not lie between $C_0-(0,1)$ and
  $C_0+(0,1)$ for some $n\in\N$, for example $\widetilde f^n(C_0)
  \prec C_0+(0,1).$ Then, by induction $\widetilde f^{i2n}\prec
  C_0+(0,i)$. This, however, implies that the rotation number is
  strictly positive, contradicting the assumptions.
\end{proof}

\begin{lema} \label{l.essential-limit} Let $\widetilde f\in\homeo(\A)$
  and suppose $E$ is an essential continuum which is disjoint from its
  image and has a bounded orbit. If $\widetilde f$ is
  order-preserving, then $\lim^\mathcal{H}_{n\to\infty} \widetilde
  f^n(E)$ exists and contains an invariant circloid. If $\widetilde f$
  is order-reversing, then $\lim^\mathcal{H}_{n\to\infty}\widetilde
  f^{2n}(E)$ exists and contains a circloid which is either invariant
  or two-periodic.
\end{lema}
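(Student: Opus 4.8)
The plan is to prove the order-preserving case first and then reduce the order-reversing case to it by passing to $\widetilde f^2$.

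For the order-preserving case, I would first observe that $E$ and $\widetilde f(E)$ are disjoint essential continua, and that every complementary component of an essential continuum other than $\U^-$ and $\U^+$ is bounded and homotopically trivial; hence $\widetilde f(E)$ lies in $\U^+(E)$ or in $\U^-(E)$, i.e.\ $E\prec\widetilde f(E)$ or $\widetilde f(E)\prec E$. Since $\widetilde f$ is order-preserving it fixes the two ends of $\A$, so $\widetilde f(\U^{\pm}(S))=\U^{\pm}(\widetilde f(S))$ for every set $S$, and in particular $\widetilde f$ preserves $\prec$. Thus $(\widetilde f^n(E))_{n\in\N}$ is monotone for $\prec$, and it is bounded because the orbit of $E$ lies in some $\kreis\times[a,b]$, with $\kreis\times\{a-1\}$ and $\kreis\times\{b+1\}$ serving as bounding essential continua. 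Lemma~\ref{l.increasing-essentials-limit} (together with its evident analogue for decreasing sequences) then yields that $L:=\lim^\mathcal{H}_{n\to\infty}\widetilde f^n(E)$ exists and is an essential continuum.

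Next I would extract the invariant circloid from $L$. As all the sets $\widetilde f^n(E)$ and $L$ lie in the fixed compact region $\kreis\times[a,b]$, on which $\widetilde f$ is uniformly continuous, Hausdorff convergence is preserved under $\widetilde f$; the shifted sequence $(\widetilde f^{n+1}(E))_n$ has the same limit, so $\widetilde f(L)=L$. Hence $L$ is an $\widetilde f$-invariant essential continuum, which is bounded above (being compact) and essential in the sense that $\A\smin L$ has no component unbounded both above and below, so Lemma~\ref{l.frontiers} applies: $\mathcal{C}:=\mathcal{C}^+(L)=\A\smin(\U^{+-}(L)\cup\U^{+-+}(L))$ is a circloid. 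Each application of $\U^+$ or $\U^-$ to an $\widetilde f$-invariant set again gives an $\widetilde f$-invariant set (again since $\widetilde f$ fixes the ends of $\A$), so $\U^{+-}(L)$, $\U^{+-+}(L)$ and therefore $\mathcal{C}$ are $\widetilde f$-invariant. Finally, Lemma~\ref{l.boundary-frontier} gives $\partial\mathcal{C}\ssq\partial L\ssq L$, and since a circloid coincides with its boundary (it has empty interior) we conclude $\mathcal{C}=\partial\mathcal{C}\ssq L$; thus $L$ contains the $\widetilde f$-invariant circloid $\mathcal{C}$.

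For the order-reversing case, $\widetilde f^2$ is order-preserving and $E$ is still disjoint from $\widetilde f^2(E)$ with bounded $\widetilde f^2$-orbit, so the first case applied to $\widetilde f^2$ shows that $\lim^\mathcal{H}_{n\to\infty}\widetilde f^{2n}(E)$ exists and contains an $\widetilde f^2$-invariant circloid $\mathcal{C}$; then $\mathcal{C}$ is either $\widetilde f$-invariant or two-periodic, as claimed. The steps I expect to need the most care are the bookkeeping that invariance propagates through the nested $\U^{\pm}$-operations defining $\mathcal{C}^+$, and the upgrade from $\partial\mathcal{C}\ssq L$ to $\mathcal{C}\ssq L$, which relies on a circloid equalling its boundary — precisely where the minimality clause in the definition of a circloid enters; one should also read the hypothesis so that $E$ is disjoint from $\widetilde f^2(E)$ as well (automatic when $E$ is disjoint from all of its iterates, the situation in the intended applications) for the reduction in the order-reversing case to be legitimate.
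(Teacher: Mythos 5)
Your route is essentially the paper's: reduce to the order-preserving case via $\widetilde f^2$, note that $E$ and $\widetilde f(E)$ are $\prec$-comparable, use order-preservation to make $(\widetilde f^n(E))_{n}$ monotone and bounded, invoke Lemma~\ref{l.increasing-essentials-limit} for the existence of the limit $L=\partial\,\U^-$, and produce the circloid via Lemmas~\ref{l.frontiers} and~\ref{l.boundary-frontier}; you are in fact more explicit than the paper about the invariance of $L$ and of $\Ccal^+(L)$, and about the disjointness of $E$ from $\widetilde f^2(E)$ needed in the order-reversing case.

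There is, however, one step justified by a false claim: a circloid need not have empty interior, so it need not coincide with its boundary. This is exactly why the paper invokes Lemma~\ref{l.circloid-emptyint} (which requires the non-wandering and aperiodicity hypotheses) before concluding $\partial C=C$ in the proof of Theorem~\ref{teoa.nonwandering}; empty interior is an extra property, not part of the definition. What your argument actually establishes is $\partial\,\Ccal^+(L)\ssq L$; since $L=\partial\,\U^-$ has empty interior, a circloid with non-empty interior could not be contained in $L$, so as written you only obtain the \emph{boundary} of an invariant circloid inside the limit. To get the full statement you would need to show that $\Ccal^+(L)$ contains no bounded complementary domain of $L$ --- equivalently, to rule out a Lakes-of-Wada type component $D$ of $L^c$ whose entire boundary lies in $\partial\,\U^+(L)$ (and is simultaneously accumulated by $\U^-$) --- or else settle for the boundary version. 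To be fair, the paper's own two-line proof is equally silent on this point (Lemma~\ref{l.boundary-frontier} also only yields the boundary of a circloid), and in the only place the lemma is used, the proof of Addendum~\ref{a.type2}, the limit lies inside the minimal set $\M$, which has empty interior, so the boundary version suffices there; but the sentence ``a circloid coincides with its boundary'' should be removed or replaced by one of these arguments.
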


\begin{proof} It suffices to treat the order-preserving case, since we only have to
consider $\widetilde f^2$ when $\widetilde f$ reverses order. We either have $E\prec
\widetilde f(E)$ or $E\succ \widetilde f(E)$. We treat the first case, the other one is
similar.

If we let $E_n:=\widetilde f^n(E)$, then this is an increasing sequence with respect
to $\prec$ and converges in Hausdorff distance to the essential continuum
$\partial U^-$ given by Lemma~\ref{l.increasing-essentials-limit}. Since
$\partial U^-=\lim^\mathcal{H}_{n\to\infty} \widetilde f^n(E)$ is contained in \M\, the statement
follows from Lemma~\ref{l.boundary-frontier}.
\end{proof}

We now turn to the proof of the fact that only essential annuli can be
periodic connected components of the complement of a type two minimal
set. We start with the following well-known fact.

\begin{lema}\label{addedlemma}
  Let $f\in\homeo(\T^2)$ and suppose $\mathcal{A} \subset \T^2$ is an
  $f$-invariant essential annulus. Then, for every essential simple loop
  $\gamma$ in $\mathcal{A}$ and any neighbourhood $V\ssq\mathcal{A}$
  of $\gamma$ there exists $g\in\homeo(\T^2)$ such that:
\begin{itemize}
\item[(i)] $g$ is homotopic to $f$;
\item[(ii)] $g(\gamma)=\gamma$;
\item[(iii)] $f(x)=g(x)$ for every $x\in V^c$.
\end{itemize}

\end{lema}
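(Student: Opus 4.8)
The plan is to produce $g$ by pre-composing $f$ with the time-one map of an ambient isotopy of $\T^2$ that is supported inside $V$ and drags $\gamma$ onto $f^{-1}(\gamma)$. (We pre-compose rather than post-compose so that condition (iii), which is a statement about $V^c$, follows at once: $g$ and $f$ then agree wherever the isotopy is the identity.)

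First I would assemble the topological ingredients. As $\mathcal{A}$ is $f$-invariant and $f^{-1}$ is a homeomorphism of $\T^2$, the curve $f^{-1}(\gamma)$ is again a simple loop contained in $\mathcal{A}$ which is homotopically non-trivial in $\T^2$; hence, just like $\gamma$, it is an essential simple loop in $\mathcal{A}$. Since $\mathcal{A}$ is locally homotopically trivial, any homotopically trivial simple loop in $\mathcal{A}$ bounds a disk there, so an essential simple loop in $\mathcal{A}$ is necessarily isotopic to the core of $\mathcal{A}$; in particular $\gamma$ and $f^{-1}(\gamma)$ are isotopic to one another inside $\mathcal{A}$.

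Next I would invoke the classical fact that any two essential simple closed curves in an open annulus are carried onto one another by a compactly supported ambient isotopy of the annulus, and that such an isotopy can moreover be taken supported in any prescribed essential sub-annulus containing both curves (push each curve onto the core through a collar neighbourhood and concatenate the two isotopies). Since $\gamma\cup f^{-1}(\gamma)$ is compact, I may fix a compact essential sub-annulus $W$ with $\gamma\cup f^{-1}(\gamma)\ssq W\ssq V$ and choose the isotopy supported in the interior of $W$. Extending it by the identity outside $\mathcal{A}$ gives an isotopy $(\psi_t)_{t\in[0,1]}$ of $\T^2$ with $\psi_0=\Id$, with $\psi_t=\Id$ on $W^c$ for every $t$, and with $\psi_1(\gamma)=f^{-1}(\gamma)$.

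Finally I would set $g:=f\circ\psi_1$ and verify the three conclusions. Conclusion (i) holds because $(f\circ\psi_t)_{t\in[0,1]}$ is an isotopy from $f$ to $g$, so $g$ is isotopic, a fortiori homotopic, to $f$. Conclusion (ii) is immediate, as $g(\gamma)=f(\psi_1(\gamma))=f(f^{-1}(\gamma))=\gamma$. Conclusion (iii) follows since for $x\in V^c\ssq W^c$ we have $\psi_1(x)=x$, whence $g(x)=f(\psi_1(x))=f(x)$. I expect the only delicate point to be the localisation of the support inside the given neighbourhood $V$: one must arrange the ambient isotopy to be supported in $V$, and this is precisely where one uses that $\gamma$ together with $f^{-1}(\gamma)$ lies in a compact essential sub-annulus of $V$; the rest is routine surface topology.
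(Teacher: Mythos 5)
The paper states this lemma without proof (it is quoted as a ``well-known fact''), so your argument can only be compared with the standard construction it alludes to. Your overall strategy --- set $g=f\circ\psi_1$, where $\psi_1$ is the time-one map of an ambient isotopy supported in an essential sub-annulus of $\mathcal{A}$ carrying $\gamma$ onto $f^{-1}(\gamma)$ --- is indeed the natural one, and conclusions (i)--(iii) follow formally from the properties you list for $\psi_1$. The gap is exactly at the point you flag as ``delicate'': you claim to ``fix a compact essential sub-annulus $W$ with $\gamma\cup f^{-1}(\gamma)\ssq W\ssq V$''. This is impossible in general, because $V$ is only assumed to be a neighbourhood of $\gamma$; nothing in the hypotheses forces $f^{-1}(\gamma)\ssq V$, and if $f^{-1}(\gamma)\not\ssq V$ no such $W$ exists. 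For instance, let $f(x,y)=(x+\tfrac12,y)$, $\mathcal{A}=\{0<y<\tfrac12\}$, $\gamma$ the graph of $y=\tfrac14+\eps\sin(2\pi x)$ and $V$ a very thin neighbourhood of $\gamma$; then $f^{-1}(\gamma)$ is the graph of $y=\tfrac14-\eps\sin(2\pi x)$ and leaves $V$.

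Moreover, this is not merely a defect of your particular construction: if $g\in\homeo(\T^2)$ satisfies (ii) and (iii), then $h:=f^{-1}\circ g$ is a homeomorphism fixing $V^c$ pointwise, hence $h(V)=V$, and therefore $f^{-1}(\gamma)=h(\gamma)\ssq h(V)=V$. So $f^{-1}(\gamma)\ssq V$ is a \emph{necessary} condition, and the statement can only be established when $V$ contains $\gamma\cup f^{-1}(\gamma)$ --- in particular when $V=\mathcal{A}$, or when (iii) is weakened to $f=g$ on $\mathcal{A}^c$, which is all that the application in Proposition~\ref{addedprop} actually uses (there one only needs $g$ to agree with $f^n$ on $C_1\cup C_2\ssq \mathcal{A}^c$). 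Under that reading your proof is essentially complete, granting the standard facts you invoke (any two essential simple closed curves in an open annulus are related by a compactly supported ambient isotopy, which can be taken supported in any essential sub-annulus containing both). As written, however, the asserted inclusion $W\ssq V$ is a genuine gap, not a routine verification, and it is the step on which the argument stands or falls.
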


To a homeomorphism $g$ as in the lemma, we can naturally associate a
homeomorphism $\overline{g}:\mathbb{S}^2\rightarrow \mathbb{S}^2$,
where $\mathbb{S}^2$ is the two-dimensional sphere by cutting the
torus open along $\gamma$ to obtain an open annulus
$\torus\smin\gamma$ and then compactifying this annulus by adding two
points $N$ and $S$. Then $g|_{\gamma^c}$ is conjugate to
$\overline{g}|_{\{N,S\}^c}$ by a semiconjugacy
$h:\torus\smin\gamma\to\mathbb{S}^2\smin\{N,S\}$ and
$\overline{g}(\{N,S\})=\{N,S\}$.  Furthermore, $h$ maps the two
components of $\textrm{Cl}[\mathcal{A}]\setminus \gamma$ to two
different components $U_1$ and $U_2$ in $S^2$ with
$\textrm{Cl}[U_1]=U_1\cup \{N\}$ and $\textrm{Cl}[U_2]=U_2\cup \{S\}$.
The advantage that this transformation to a sphere homeomorphism has,
is that it allows to apply the following theorem by Matsumoto and
Nakayama \cite{MaNa}.

\begin{teo}\label{MaNa}
  Let $\overline{g}:S^2\rightarrow S^2$ be a homeomorphism and
  $C\subset S^2$ be a non singleton compact and connected set. Further
  assume that $C$ is a minimal set of $\overline{g}$. Then, there are
  exactly two periodic connected components $A_1$ and $A_2$ in $C^c$.
\end{teo}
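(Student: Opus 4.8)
The plan is to analyse the complementary domains of $C$ via Carath\'eodory's prime end theory and to show, using the minimality of $C$ and the global topology of $S^2$, that exactly two of them behave like the inside and the outside of an invariant circle. As normalisations: since by Lefschetz $\overline g$ -- or else $\overline g^2$ -- has a fixed point on $S^2$, and a finite orbit is a proper closed invariant set, $S^2$ carries no minimal homeomorphism, so $C\ne S^2$ and $C^c\ne\emptyset$. For a component $U$ of $C^c$ the set $S^2\smin U=C\cup\bigcup_{V\ne U}\overline V$ is connected, as each $\overline V$ meets the connected set $C$ along the non-empty set $\partial V\ssq C$; hence $U$ is a topological open disk with $\partial U\ssq C$. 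Moreover $\inte(C)$ is open and $\overline g$-invariant, hence empty by minimality, and $C$, being a nondegenerate continuum, contains no periodic point of $\overline g$. Finally $\overline g$ permutes $\conn(C^c)$, and if $U\in\conn(C^c)$ is periodic of period $p$ then $\bigcup_{i=0}^{p-1}\partial(\overline g^i U)$ is closed, non-empty, $\overline g$-invariant and contained in $C$, hence equal to $C$.

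Next I would show that $C^c$ has at least two periodic components. The Lefschetz fixed point above lies in $C^c$, since $C$ has no periodic point, so the component $A_1\in\conn(C^c)$ containing it is periodic, of some period $p$. If $p\ge 2$, then $A_1$ and $\overline g(A_1)$ are two distinct periodic components. If $p=1$, then $A_1$ is $\overline g$-invariant, so $\partial A_1=C$, and $X:=S^2\smin A_1$ (whose complement is the disk $A_1$) is an $\overline g$-invariant, non-separating continuum; by the Cartwright--Littlewood fixed point theorem, applied to $\overline g$ -- or to $\overline g^2$ in the orientation-reversing case -- $\overline g$ has a periodic point in $X$, which cannot lie in $C$ and hence lies in a second periodic component $A_2\ne A_1$.

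It remains to exclude a third periodic component, which is the crux. Suppose $A_1,A_2,A_3$ are pairwise distinct and periodic; passing to a common power $h=\overline g^N$, each $A_i$ is an $h$-invariant open disk with $h$-invariant frontier $\partial A_i\ssq C$, while $C$ is a union of at most $N$ minimal sets of $h$ permuted by $\overline g$. By Carath\'eodory's theory, $h$ induces on the prime end circle of $\widehat A_i\cong\overline{\mathbb{D}}$ an orientation-preserving (after replacing $h$ by $h^2$) circle homeomorphism with prime end rotation number $\rho_i$, accessible prime ends being carried into $\partial A_i$. If some $\rho_i$ is rational, a periodic prime end yields a periodic point of $\overline g$ on $C$ -- directly if it is accessible, and otherwise through its impression, an $h$-invariant subcontinuum of $C$ whose return map again has a periodic point -- contradicting the normalisations. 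If all the $\rho_i$ are irrational, each prime-end circle carries an almost periodic extension of the action of $h$ on $C$, and an index-theoretic count relating the rotational behaviour of $h$ near $A_1,A_2,A_3$ to $\chi(S^2)=2$ -- alternatively, deleting $A_1\cup A_2$ leaves an essential annular continuum containing $C$ and $A_3$, to which the circloid constructions around Lemma~\ref{l.boundary-frontier} apply -- shows a third such domain is impossible. With the preceding paragraph this gives exactly two.

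The hard part is this last step, and it is genuinely dynamical rather than topological: three or more disjoint open disks sharing a common frontier $C$ occur in general (Lakes of Wada), so excluding a third \emph{periodic} domain forces one to use the prime end rotation numbers, the minimality of $C$, and the constraint $\chi(S^2)=2$ together. The most delicate ingredient is the rational case, where a periodic point of $\overline g$ on $C$ must be extracted from the dynamics on prime end impressions even when no periodic prime end is accessible.
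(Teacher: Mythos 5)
First, a point of comparison: the paper does not prove this statement at all --- it is quoted verbatim as a theorem of Matsumoto and Nakayama \cite{MaNa} and used as a black box (via the cutting construction of Lemma~\ref{addedlemma}) in the proof of Proposition~\ref{addedprop}. So there is no ``paper proof'' for your argument to parallel; what you have attempted is a proof of the cited result itself, and it has to be judged on its own.

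Your preliminary normalisations and the ``at least two'' half are sound: each complementary component is an open disk with frontier in $C$, $C$ carries no periodic points, a Lefschetz fixed point of $\overline g$ or $\overline g^2$ pins down one periodic component $A_1$, and (after puncturing at that fixed point) Cartwright--Littlewood applied to the non-separating continuum $S^2\smin A_1$ produces a second one. The genuine gap is in the ``at most two'' half, which you yourself identify as the crux but do not actually prove. In the rational case, the assertion that a fixed prime end yields a periodic point of $\overline g$ in $C$ ``through its impression, an $h$-invariant subcontinuum of $C$ whose return map again has a periodic point'' is unjustified and false as stated: an invariant subcontinuum of a surface homeomorphism need not contain periodic points (an invariant circle with irrational rotation already shows this), Cartwright--Littlewood requires the continuum to be non-separating, and the impression of the fixed prime end may well be all of $C$, which separates $S^2$. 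More generally, a rational prime-end rotation number forces periodic points in the boundary only under additional recurrence hypotheses (this is the content of work of Koropecki, Le~Calvez and Nassiri; without such hypotheses there are counterexamples), so ruling out the rational case here requires a genuine argument exploiting minimality of $C$, which you have not supplied. In the irrational case, ``an index-theoretic count relating the rotational behaviour of $h$ near $A_1,A_2,A_3$ to $\chi(S^2)=2$'' is a gesture, not a proof: to run it you would need to know that every $h$-invariant complementary disk with irrational prime-end rotation number contributes fixed point index exactly $1$ \emph{computed in $S^2$} (not merely in its prime-end compactification, where Brouwer gives a fixed point), and that all remaining fixed points of $h$ --- which lie in other invariant components, possibly with rational rotation number --- contribute non-negatively, so that three disks would force total index at least $3>2$. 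Establishing these index identities is precisely the hard technical content of the Matsumoto--Nakayama paper, and the suggested alternative (an annular continuum containing $C$ and $A_3$ plus the circloid constructions of Lemma~\ref{l.boundary-frontier}) does not by itself exclude a third periodic disk, since the obstruction is dynamical rather than topological, as the Lakes-of-Wada examples you mention make clear. As it stands, therefore, the proposal proves only half of the theorem.
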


In our context, it is obvious from the invariance of $\mathcal{A}$
that $A_1$ and $A_2$ are the images of the two components of
$\Acal\smin\gamma$ under $h$. We obtain the following.
\begin{prop}\label{addedprop}
  Suppose that $\mathcal{M}$ is a type 2 minimal set for
  $f\in\homeo(\T^2)$. Further suppose there exists an essential
  annulus $\mathcal{A}$ in $\textrm{Conn}(\mathcal{M}^c)$ which is periodic.
  Then any disk in $\textrm{Conn}(\mathcal{M}^c)$ is wandering.
\end{prop}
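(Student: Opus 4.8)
The plan is to argue by contradiction, combining the sphere construction preceding the proposition with Theorem~\ref{MaNa}. So suppose some disk $D_0\in\conn(\mathcal M^c)$ is periodic. (By Proposition~\ref{p.periodic-nonexistence} such a $D_0$ is necessarily unbounded, but the argument does not rely on this.) The goal is to exhibit, on an auxiliary sphere, a minimal set whose complement has at least three periodic connected components.

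The first step, and the technical heart of the proof, is to pass to an invariant configuration built around a \emph{connected} minimal set. Using that $f$ permutes $\conn(\mathcal M)$ and $\conn(\mathcal M^c)$, one replaces $f$ by a suitable iterate and selects a single connected component $\Lambda\ssq\mathcal M$ together with an invariant essential annulus $\mathcal A^\ast$ of $\mathcal M^c$ so that $\partial\mathcal A^\ast\ssq\Lambda$, $\partial D_0\ssq\Lambda$, and $D_0$ itself is invariant. Then $\mathcal A^\ast$ and $D_0$ are connected components of $\Lambda^c$, and $\Lambda$ is minimal for the iterate: a proper closed invariant $K\subsetneq\Lambda$ would produce, via its finite $f$-orbit, a proper closed $f$-invariant subset of $\mathcal M$ — proper because distinct components of $\mathcal M$ are disjoint — contradicting minimality of $\mathcal M$. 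Carrying this out is exactly where the real work lies: one must organise the possibly disconnected type~$2$ minimal set $\mathcal M$ so that one of its components is simultaneously adjacent to an invariant essential annulus of $\mathcal M^c$ and contains $\partial D_0$, and one must match up the $f$-periods of $\mathcal A^\ast$, of $D_0$ and of $\Lambda$ (bearing in mind that an iterate of a minimal homeomorphism need not be minimal, which constrains which iterate may be taken).

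Granting this, let $g\in\homeo(\T^2)$ be the homeomorphism furnished by Lemma~\ref{addedlemma} applied to (the iterate of) $f$ and $\mathcal A^\ast$, with $g(\gamma)=\gamma$ for an essential loop $\gamma\subset\mathcal A^\ast$ and $g$ agreeing with the iterate off a neighbourhood $V\ssq\mathcal A^\ast$ of $\gamma$. Since $\Lambda$ and $D_0$ are disjoint from $\mathcal A^\ast\supseteq V$, the map $g$ coincides with the iterate on $\Lambda$ and on $D_0$; hence $g(\Lambda)=\Lambda$ with $\Lambda$ $g$-minimal, and $g(D_0)=D_0$. Cutting $\T^2$ along $\gamma$ and compactifying yields a sphere $S^2$ with poles $N,S$, a homeomorphism $h\colon\T^2\smin\gamma\to S^2\smin\{N,S\}$, and an induced $\overline g\colon S^2\to S^2$ with $h\circ g=\overline g\circ h$ on $\gamma^c$. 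Put $C:=h(\Lambda)$: it is compact, connected, $\overline g$-minimal, and non-singleton (a type~$2$ minimal set contains no periodic point, so $\Lambda$ is non-degenerate). The two components of $\mathcal A^\ast\smin\gamma$ map, each together with one pole, to open connected sets $U_1\ni N$ and $U_2\ni S$ with $\partial U_i\ssq h(\partial\mathcal A^\ast)\ssq h(\Lambda)=C$, so $U_1\ne U_2$ are the two periodic components of $C^c$ provided by Theorem~\ref{MaNa}. Finally, $\overline{D_0}\ssq D_0\cup\mathcal M$ is disjoint from $\gamma$, so $h$ embeds $\overline{D_0}$ and $h(D_0)$ is open, connected, disjoint from $C$, with $\partial h(D_0)=h(\partial D_0)\ssq h(\Lambda)=C$; hence $h(D_0)$ is a connected component of $C^c$, it is $\overline g$-invariant, and it is distinct from $U_1$ and $U_2$ (it contains no pole and misses $h(\mathcal A^\ast\smin\gamma)$). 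Thus $C^c$ has at least three periodic connected components, contradicting Theorem~\ref{MaNa}; therefore no periodic disk exists, i.e.\ every disk in $\conn(\mathcal M^c)$ is wandering. The main obstacle is the reduction of the second paragraph; once it is in place, the sphere model and Theorem~\ref{MaNa} close the argument almost immediately.
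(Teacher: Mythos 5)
Your overall strategy is the one the paper uses: modify the map near an essential loop $\gamma$ in the periodic annulus via Lemma~\ref{addedlemma}, pass to the sphere obtained by cutting along $\gamma$ and compactifying, and contradict the ``exactly two periodic complementary components'' statement of Theorem~\ref{MaNa}. However, the step you explicitly defer (``Granting this\dots'') is not a harmless reduction, and as you formulate it it cannot in general be carried out. You require a \emph{single} connected component $\Lambda$ of $\M$ containing both $\partial\mathcal{A}^\ast$ and $\partial D_0$. Nothing you say rules out that the two boundary continua of every invariant essential annulus lie in \emph{different} connected components of $\M$ (picture $\M$ containing two disjoint essential continua with the invariant annulus between them; this kind of configuration is compatible with the paper's own final description of type~2 sets, since the boundary of a periodic circloid need not be connected), nor that $\partial D_0$ lies in a component of $\M$ which does not meet $\partial\mathcal{A}^\ast$ at all. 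The containment $\partial\mathcal{A}^\ast\ssq\Lambda$ is precisely what makes your $U_1$ and $U_2$ connected components of $C^c$; without it, Theorem~\ref{MaNa} still gives exactly two periodic components of $h(\Lambda)^c$, but you cannot identify them with the pole components (nor even guarantee the two pole components are distinct), so a periodic $h(D_0)$ by itself is no contradiction --- it could be one of the two allowed periodic components. By contrast, the period bookkeeping you flag as delicate is actually fine: since $D_0$ is periodic, the component $\Lambda\supset\partial D_0$ is periodic, it is minimal for its return map, and a connected minimal set stays minimal under every iterate; the genuine gap is the containment requirement, not the choice of iterate.

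The paper avoids this issue with a different and simpler reduction, which is the idea your proposal is missing. Since $\Acal$ is periodic, the orbit of $\partial\Acal$ is a compact $f$-invariant subset of $\M$, so by minimality $\M$ \emph{equals} this finite orbit. Consequently the connected set $\partial V$ of any disk $V\in\conn(\M^c)$ is carried by a suitable iterate into a single boundary component $C_1$ of $\Acal$, and $V$ is then a connected component of $C_1^c$. One applies Lemma~\ref{addedlemma} and Theorem~\ref{MaNa} with the minimal set $C_1$ of the modified map $g$: the two periodic components of the complement on the sphere are the two sides of the annulus (the pole components), and a periodic disk would be a third one. This route never needs $\partial D_0$ and $\partial\Acal$, or the two components of $\partial\Acal$, to sit in one component of $\M$. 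To make your argument complete you would either have to prove your containment claim (which is not available in general) or replace $\Lambda$ by a single boundary continuum of the annulus and redo the last step as the paper does.
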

\begin{proof}

  We denote by $C_1,C_2\subset \T^2$ the two connected components of
  $\partial\mathcal{A}$, allowing for $C_1=C_2$ in case
  $\partial\Acal$ is connected. Then due to the fact that
  $\mathcal{A}$ is periodic there exists $n\in\N$ such that $C_1$ and
  $C_2$ are minimal sets of $f^n$ and $\mathcal{M}=(C_1\cup
  C_2)\cup...\cup f^n(C_1\cup C_2)$. Hence, given a disk
  $V\in\textrm{Conn}(\mathcal{M}^c)$, then since $\partial V$ is a
  connected set contained in $\mathcal{M}$ there exists $n_0 \in \N$
  such that $V_1:=f^{n_0}(\partial V)\subset C_1\cup C_2$. We assume
  \twlog\ that $n_0=0$ and $V_1\ssq C_1$. This means that $V$
  is a connected component of $C_1^c$. However, if we consider
  $g\in\homeo(\T^2)$ given by Lemma \ref{addedlemma} applied to $f^n$
  and some essential loop $\gamma\ssq\Acal$, we have that $C_1$ is a
  minimal set for $g$.

  Suppose for a contradiction that $V$ is periodic by $f$. Then $V$ is
  a periodic connected component of $C_1^c$ for $g$. However, as $V$
  is a disk it cannot coincide with one of the two periodic components
  $A_1$ and $A_2$ that $g$ admits. This contradicts
  Theorem~\ref{MaNa}.
\end{proof}

We are ready now to give the proof of the Addendum~\ref{a.type2}.

\begin{proof}[Proof of Addendum~\ref{a.type2}] Suppose
  $f\in\homeo(\torus)$ and $\M$ is a minimal set of type 2. Let
  $\Acal$ be an essential annulus of homotopy type $(p,q)$ in
  $\conn(\M^c)$. Then $\Acal$ is either wandering or periodic, and in
  each case $\rho_{(p,q)}(\widetilde f)$ contains a unique number
  $\rho\in\R$ by Lemma~\ref{l.rational}. If $\rho$ is irrational, then
  $\Acal$ is wandering by Lemma~\ref{l.degenerate-rotinterval}, and
  Lemma~\ref{l.semi-conjugacy} provides the existence of a
  semiconjugacy to an irrational rotation of \kreis. Furthermore, due
  to the existence of such a semi-conjugacy any element in
  $\textrm{Conn}(\mathcal{M}^c)$ is wandering. Thus, we are in case
  (ii) of the addendum.

Now, assume $\rho$ is rational. Passing to an iterate $f^k$ and
choosing the right lift $\widetilde f$ of $\widehat f^k$ in
(\ref{e.fhat}), we may assume \twlog\ that $\rho_{(p,q)}(f^k)=0$. Let
$\Acal_0$ be a lift of $f_A(\Acal)$, where $A\in\textrm{SL}(2,\Z)$ is
chosen as in (\ref{e.fhat}), and let $\widetilde f$ be the lift of $f^k$
used to compute the rotation interval of $f^k$. Then either by
invariance or by Lemma~\ref{l.bounded-circloid}, the orbit of
$C_0=\Ccal^+(\Acal_0)$ under $\widetilde f$ is bounded. Hence, by
Lemma~\ref{l.essential-limit}, $\lim^\mathcal{H}_{n\to\infty}
\widetilde f^{2n}(C_0)$ contains an $\widetilde f^2$-invariant
circloid $\widetilde C$. Since $\widetilde C$ is disjoint from
$\Acal_0+(0,1)\cdot\Z$, it projects to a circloid $C$ on $\torus$
which is $2k$-periodic under $f$. Furthermore, $C$ is contained in the
Hausdorff limit of $f^{2kn}(\partial A)$ and thus in \M. By
minimality, we obtain $\M=\bigcup_{n=1}^{2k}f^n(C)$. Moreover, in this
case Proposition \ref{addedprop} implies that any disk in
$\textrm{Conn}(\mathcal{M}^c)$ has to be wandering, which means that
we are in case (i) of the addendum.
\end{proof}

\begin{obs}
  The above proof shows that case (i) of the addendum corresponds exactly to a
  rational rotation number orthogonal to the homotopy vector of the essential
  annuli, whereas case (ii) corresponds to an irrational rotation number.
\end{obs}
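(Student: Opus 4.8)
The plan is to deduce both equivalences directly from the proof of Addendum~\ref{a.type2}, which already establishes one implication in each direction; the whole point is that the two dichotomies at play --- case (i) versus case (ii), and rationality versus irrationality of the orthogonal rotation number $\rho_{(p,q)}(f)$ --- are each mutually exclusive and exhaustive, so a pair of one-sided implications automatically upgrades to a pair of biconditionals.

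First I would fix, once and for all, one essential annulus $\Acal\in\conn(\M^c)$ and let $(p,q)$ be its homotopy type. Recall from the proof of Proposition~\ref{p.classification} that all essential components of $\M^c$ share this homotopy type, and that by Lemma~\ref{l.eigenvector-of-homology} the vector $(p,q)$ is an eigenvector of the action on homotopy; hence the orthogonal rotation number $\rho_{(p,q)}(f)$ is well-defined (up to integer multiples of $\|(p,q)\|_2$ by Lemma~\ref{l.rational}) and does not depend on which essential annulus of $\conn(\M^c)$ is chosen. Next I would record that cases (i) and (ii) are mutually exclusive: in case (i) the annulus $\Acal$ is periodic, in case (ii) it is wandering, and a nonempty open set cannot be both. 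By Addendum~\ref{a.type2} exactly one of (i), (ii) holds, and trivially exactly one of ``$\rho_{(p,q)}(f)$ rational'' and ``$\rho_{(p,q)}(f)$ irrational'' holds.

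Then I would reread the proof of the Addendum to isolate the two implications it actually proves. If $\rho_{(p,q)}(f)$ is irrational, then $\Acal$ is wandering by Lemma~\ref{l.rational} and Lemma~\ref{l.semi-conjugacy} produces a semiconjugacy of $f$ to the irrational rotation on \kreis, which forces every component of $\M^c$ to be wandering, i.e.\ case (ii) holds. If $\rho_{(p,q)}(f)$ is rational, the argument builds a periodic circloid whose orbit is $\M$ and, via Proposition~\ref{addedprop}, all disks are wandering, i.e.\ case (i) holds. Combining these two implications with the two exclusive-and-exhaustive dichotomies gives both converses for free: if case (i) holds then $\rho_{(p,q)}(f)$ cannot be irrational (that would force case (ii), contradicting exclusivity), hence it is rational; symmetrically, case (ii) forces $\rho_{(p,q)}(f)$ to be irrational. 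This yields the two equivalences asserted in the remark.

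I expect no genuine obstacle: the content is purely a matter of bookkeeping, and the only point requiring a little care is to phrase the exclusivity of (i) and (ii) relative to a single fixed essential annulus $\Acal$ --- which is legitimate precisely because Lemma~\ref{l.eigenvector-of-homology} together with the uniqueness of the homotopy type of the essential components makes $(p,q)$, and hence $\rho_{(p,q)}(f)$, independent of that choice.
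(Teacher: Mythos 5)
Your proposal is correct and follows the same route as the paper: the remark is exactly the observation that the proof of Addendum~\ref{a.type2} splits along the rational/irrational dichotomy for $\rho_{(p,q)}(f)$, proving ``irrational $\Rightarrow$ (ii)'' and ``rational $\Rightarrow$ (i)'', and since cases (i) and (ii) are mutually exclusive (a given essential annulus in $\conn(\M^c)$ cannot be both periodic and wandering) and exhaustive, the two implications upgrade to the stated equivalences. Your extra care about the independence of $(p,q)$, and hence of $\rho_{(p,q)}(f)$, from the choice of essential component is sound and matches the paper's setup.
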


\subsection{Minimal sets of type 3}

The following addendum to Proposition~\ref{p.classification}
concerning the structure of minimal sets of type 3 completes the proof
of Theorem~\ref{TEOA}.

\begin{add}\label{Cantorextension}
  Suppose $f\in\homeo(\torus)$ and $\mathcal{M}$ is a minimal set of
  type 3. Then $\mathcal{M}$ is an extension of either a periodic
  orbit or a Cantor set.
\end{add}

Again, we first recall some purely topological facts. We say
$U\ssq\torus$ is {\em non-separating} if $\torus\smin U$ is connected.
We call a partition into continua $\mathcal{U}=\{U_i\}_{i\in I}$ of $\T^2$ an {\em
  upper semi-continuous decomposition} if it satisfies
\begin{itemize}
\item [(i)] $\biguplus_{i\in I} U_i = \torus$ and $U_i \cap U_j = \emptyset$ if $i \neq j$,
\item[(ii)] $U_i$ is a compact, bounded and non-separating set for every
$i\in I$;
\item[(iii)] if $\{U_n\}_{n\in \N}\subset \mathcal{U}$ has Hausdorff
limit $\mathcal{C}$, then there exists $U_0\in\mathcal{U}$ so that
$\mathcal{C}\subset U_0$ (upper semi-continuity property).
\end{itemize}
Further, we say  that a map
$\Phi:\T^2\rightarrow \T^2$ is a Moore projection for the decomposition $\mathcal{U}$
if it satisfies

\begin{itemize}
 \item[(i)] $\Phi$ is continuous and surjective;
 \item[(i)] $\Phi$ is homotopic to the identity;
 \item[(ii)] $\Phi^{-1}(x)\in \mathcal{U}$ for all $x\in\T^2$.
\end{itemize}

Now, the following is a classical decomposition theorem by R. Moore (see e.g.\cite{whyburn}).

\begin{moore}\label{moorteo}
For any upper semi-continuous decomposition of $\T^2$ there exists a Moore
projection.
\end{moore}
For our purposes, we have to ensure that under suitable conditions this
projection produces a Cantor set.

\begin{lema}\label{disc}
Let $A\subset\T^2$ be closed and denote its connected components by
$\{A_i\}_{i\in I}$. Suppose that the decomposition
$\mathcal{U}=\{A_i\}_{i\in I}\cup \{\{x\}:x\notin \bigcup_{i\in
\N}A_i\}$ is upper semi-continuous. Then for any Moore
projection $\Phi$ associated to $\mathcal{U}$, the image $\Phi(A)$ is
totally disconnected.
\end{lema}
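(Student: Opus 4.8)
The plan is to argue by contradiction: assume $\Phi(A)$ contains a nontrivial connected subset $K$ with at least two points, and produce a connected subset of $A$ that is not contained in any single decomposition element, contradicting upper semi-continuity (or rather, contradicting the structure of $\mathcal{U}$, whose non-singleton elements are exactly the connected components $A_i$ of $A$). The first step is to observe that $\Phi(A)$ is closed, since $A$ is compact and $\Phi$ is continuous, so we may take $K\ssq\Phi(A)$ to be a nondegenerate continuum. Pick two distinct points $y_0,y_1\in K$. Then $\Phi^{-1}(y_0)$ and $\Phi^{-1}(y_1)$ are disjoint elements of $\mathcal{U}$, and since $y_j\in\Phi(A)$, each meets $A$; because the only elements of $\mathcal{U}$ that meet $A$ in more than a singleton are the components $A_i$, and singleton elements $\{x\}$ meeting $A$ have $x\in A$ as well, in all cases $\Phi^{-1}(y_j)$ is a connected subset of $A$. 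So $y_0,y_1$ lie in two distinct connected components of $A$.

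The core step is then to show that $\Phi^{-1}(K)$ is a connected subset of $\torus$ which meets at least two distinct components of $A$ and is covered by elements of $\mathcal{U}$ all of which lie in $A$ — forcing $\Phi^{-1}(K)$ itself into $A$, whence $\Phi^{-1}(K)$ is a connected subset of $A$ containing points of two different components $A_i$, an immediate contradiction. Connectedness of $\Phi^{-1}(K)$ is the point requiring care: in general the preimage of a connected set under a continuous surjection need not be connected. Here one uses that $\Phi$ is a quotient map realizing the decomposition $\mathcal{U}$ — equivalently, $\Phi$ is a closed map (continuous image of the compact $\torus$) whose fibres are the connected sets $U_i$ — together with upper semi-continuity. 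Concretely, if $\Phi^{-1}(K)=P\cup Q$ were a separation into nonempty relatively closed (hence closed in $\torus$) sets, then since each fibre $\Phi^{-1}(y)$ with $y\in K$ is connected it lies entirely in $P$ or entirely in $Q$; thus $K=\Phi(P)\cup\Phi(Q)$ with $\Phi(P),\Phi(Q)$ disjoint, and both closed because $\Phi$ is a closed map. This separates $K$, contradicting its connectedness. Hence $\Phi^{-1}(K)$ is connected.

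It remains to check that every element of $\mathcal{U}$ meeting $\Phi^{-1}(K)$ is contained in $A$. Such an element is a fibre $\Phi^{-1}(y)$ with $y\in K\ssq\Phi(A)$, so $y=\Phi(a)$ for some $a\in A$; thus $\Phi^{-1}(y)$ meets $A$. If $\Phi^{-1}(y)$ is a singleton, it equals $\{a\}\ssq A$; if it is non-singleton, it is one of the components $A_i$, hence contained in $A$. Either way $\Phi^{-1}(y)\ssq A$, so $\Phi^{-1}(K)=\bigcup_{y\in K}\Phi^{-1}(y)\ssq A$. Since $\Phi^{-1}(K)$ is connected, it lies in a single component $A_i$; but it contains the disjoint sets $\Phi^{-1}(y_0),\Phi^{-1}(y_1)$ belonging to two distinct components of $A$, a contradiction. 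Therefore $\Phi(A)$ contains no nondegenerate connected subset, i.e.\ $\Phi(A)$ is totally disconnected. The main obstacle in this argument is the connectedness of $\Phi^{-1}(K)$; everything else is bookkeeping with the definition of $\mathcal{U}$, and the key input making the obstacle manageable is that a Moore projection is a closed quotient map with connected fibres.
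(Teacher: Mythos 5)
Your proof is correct and is essentially the paper's own argument: both rest on the observation that the saturated preimage of a nondegenerate continuum in $\Phi(A)$ lies in $A$, has connected fibres, and that any separation of it into two closed pieces pushes down (by closedness/compactness of $\Phi$) to a separation of the continuum. The only difference is presentational — you phrase the contradiction as ``the preimage is connected yet meets two components of $A$,'' while the paper phrases it as ``the preimage cannot be connected, yet every separation of it would disconnect $C$.''
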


\begin{proof}
  Suppose for a contradiction that $\Phi(A)$ is not totally
  disconnected, then there exists a connected component $C\ssq\Phi(A)$
  which has more than one element. Since connected components of $A$
  project to single points, the set $\Phi^{-1}(C)\ssq A$ cannot be
  connected and therefore decomposes into two disjoint relatively
  closed subsets $C_1$ and $C_2$. As a connected component of a
  compact set, $C$ is compact, and the same is true for its preimage
  $\Phi^{-1}(C)$. Hence, both $C_1$ and $C_2$ are compact.

  For any $x \in C$, the continuum $\Phi^{-1}(x)\ssq
  \Phi^{-1}(C)=C_1\cup C_2$ is either completely contained in $C_1$ or
  completely contained in $C_2$. Consequently, the images $\Phi(C_1)$
  and $\Phi(C_2)$ are disjoint. However, this means that $C$
  decomposes into two disjoint compact sets, contradicting its
  connectedness.
\end{proof}

Finally, the following statement will be useful to verify the upper
semi-continuity of decompositions.

\begin{lema}\label{fill3}
  Let $A_n,\ n\in\N$ be a family of compact, connected and bounded
  sets in $\T^2$. If the sets $\filll(A_n)$ are pairwise disjoint and
  $A_n\rightarrow_{\mathcal{H}}\mathcal{A}$, then
  $\filll(A_n)\rightarrow_{\mathcal{H}}\mathcal{A}$.
\end{lema}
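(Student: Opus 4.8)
The plan is to show two inclusions for the Hausdorff convergence: that eventually $\filll(A_n)\ssq B_\eps(\mathcal A)$, and that eventually $\mathcal A\ssq B_\eps(\filll(A_n))$. The second inclusion is immediate, since $A_n\ssq\filll(A_n)$ and $A_n\to_{\mathcal H}\mathcal A$ already gives $\mathcal A\ssq B_\eps(A_n)\ssq B_\eps(\filll(A_n))$ for large $n$. So the work is entirely in the first inclusion, i.e.\ in controlling the filled-in disks $D_k(A_n)$ from Lemma~\ref{fill2}: we must show they cannot stay large and far from $\mathcal A$.

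\textbf{Main argument.} Suppose for a contradiction that the first inclusion fails. Then there is $\eps>0$, a subsequence (still denoted $n$), and points $z_n\in\filll(A_n)\smin B_\eps(\mathcal A)$. Since $\mathcal A\ssq B_{\eps/2}(A_n)$ eventually, each $z_n$ lies in $\filll(A_n)\smin A_n$, hence in one of the bounded complementary disks $D_{k(n)}(A_n)$ with, by Lemma~\ref{fill2}, $\partial D_{k(n)}(A_n)=A_n\cap\cl[D_{k(n)}(A_n)]$. Now lift to the plane: by Lemma~\ref{l.compact_bounded} each $A_n$ is a bounded continuum with a well-defined lift $A_n^0$ unique up to integer translation, and $D_{k(n)}(A_n)$ lifts to a genuine bounded Jordan-type disk $\widetilde D_n$ with $\partial\widetilde D_n\ssq A_n^0$. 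Choosing lifts $z_n^0$ of $z_n$ in $\widetilde D_n$ inside a fixed fundamental domain (using the integer-translation freedom), compactness lets us pass to a further subsequence so that $z_n^0\to z^0$ and $A_n^0\to_{\mathcal H}\mathcal A^0$ (a lift of $\mathcal A$); note $\dist(z^0,\mathcal A^0)\geq\eps/2$. Pick a point $w^0\in\partial\widetilde D_n$; it lies in $A_n^0$, hence within $\eps/4$ of $\mathcal A^0$ for large $n$, so the arcs of $\partial\widetilde D_n$ are not contained in a small ball about $z^0$: the disk $\widetilde D_n$ has diameter bounded below.

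\textbf{The obstacle and its resolution.} The crux — and the step I expect to be hardest — is ruling out that these disks $\widetilde D_n$, while not shrinking to $z^0$, persist without their boundaries converging into $\mathcal A^0$; a priori the boundary curve could be long and wiggly and its Hausdorff limit could be strictly larger than a subset of $\mathcal A^0$. This is precisely where disjointness of the $\filll(A_n)$ is used. Here is the mechanism: because $z^0\notin B_{\eps/2}(\mathcal A^0)$ but $z^0$ is an interior point of each $\widetilde D_n$ (for large $n$, by openness and $z_n^0\to z^0$), the straight segment from $z^0$ to the nearest point of $\mathcal A^0$ must cross $\partial\widetilde D_n\ssq A_n^0$; letting $y_n^0$ be such a crossing point we get $y_n^0\in A_n^0$ with $\dist(y_n^0,\mathcal A^0)\to 0$ automatically, which is fine, but we also obtain a point $p_n^0\in\partial\widetilde D_n$ at distance exactly $\eps/4$ from $z^0$ (by continuity of the segment parametrisation, since $\widetilde D_n$ contains the $\eps/4$-ball about $z^0$ once $n$ is large but not the whole segment). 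Then $p_n^0\in A_n^0\to_{\mathcal H}\mathcal A^0$ forces a limit point $p^0\in\mathcal A^0$ with $\dist(p^0,z^0)=\eps/4<\eps/2\leq\dist(z^0,\mathcal A^0)$, a contradiction. The remaining bookkeeping — that the $\eps/4$-ball about $z^0$ is eventually inside $\widetilde D_n$ (else the ball meets $A_n^0$, again forcing a near-$\mathcal A^0$ point of $A_n^0$ near $z^0$), and that we may reduce to a single lift using Lemma~\ref{l.compact_bounded} — is routine. This establishes $\filll(A_n)\ssq B_\eps(\mathcal A)$ for large $n$, and together with the trivial inclusion gives $\filll(A_n)\to_{\mathcal H}\mathcal A$.
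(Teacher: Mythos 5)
Your reduction to the two Hausdorff inclusions and to bad points $z_n\in\filll(A_n)\smin B_\eps(\mathcal A)$ sitting in complementary disks of $A_n$ is the same as the paper's, but the final contradiction step has a genuine gap. From $B_{\eps/4}(z^0)\ssq\widetilde D_n$ you can only conclude that $\partial\widetilde D_n$ avoids the open $\eps/4$-ball; the first point where the segment from $z^0$ to the nearest point of $\pi^{-1}(\mathcal A)$ meets $\partial\widetilde D_n$ therefore lies at distance \emph{at least} $\eps/4$ from $z^0$, possibly as large as $d(z^0,\pi^{-1}(\mathcal A))$ itself, and a point of $A_n^0$ at such a distance which is moreover close to $\pi^{-1}(\mathcal A)$ contradicts nothing. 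There is in general no point of $\partial\widetilde D_n$ at distance \emph{exactly} $\eps/4$ from $z^0$: the whole boundary may stay in a small neighbourhood of $\pi^{-1}(\mathcal A)$ while the disk is large and contains $z^0$ deep in its interior. Tellingly, the mechanism you describe never actually invokes the hypothesis that the $\filll(A_n)$ are pairwise disjoint, and no argument that inspects a single $n$ at a time can succeed: for one fixed $n$ (say $A_n$ a loop Hausdorff-close to a loop $\mathcal A$ bounding a disk of definite size) the configuration you are trying to exclude is perfectly realizable. A smaller slip of the same kind is the claim that $z^0\in\widetilde D_n$ for large $n$ ``by openness and $z_n^0\to z^0$'': openness of sets varying with $n$ gives nothing; it is your later ball dichotomy that repairs this particular point.

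The good news is that your intermediate steps are one line away from a correct proof, and that line is exactly where disjointness must enter. Your dichotomy says that for large $n$ either $B_{\eps/4}(z^0)$ meets $\partial\widetilde D_n\ssq\pi^{-1}(A_n)$, which contradicts $d(z^0,\pi^{-1}(\mathcal A))\geq\eps$ once $A_n\ssq B_{\delta_n}(\mathcal A)$ with $\delta_n$ small, or $B_{\eps/4}(z^0)\ssq\widetilde D_n$. But the latter can happen for at most one index: for $m\neq n$ the disks $\widetilde D_m$ and $\widetilde D_n$ project into the disjoint sets $\filll(A_m)$ and $\filll(A_n)$, hence are disjoint and cannot both contain a fixed nonempty ball. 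This finishes the argument, and it can be run directly on the torus, making the lift unnecessary. On that point, the lifting as you set it up is also not quite licit: one cannot simply pass to a subsequence to get $A_n^0\to_{\mathcal H}\mathcal A^0$ for ``a lift $\mathcal A^0$ of $\mathcal A$'' --- the components of $\pi^{-1}(A_n)$ need not have uniformly bounded diameter (see Remark~\ref{r.boundedness}(2)) and $\mathcal A$ need not be bounded at all; what you actually need, and what is true, are the estimates $A_n^0\ssq B_{\delta_n}(\pi^{-1}(\mathcal A))$ and $d(z^0,\pi^{-1}(\mathcal A))\geq\eps$, lifted from the torus (also, to get $z_n\notin A_n$ you need $A_n\ssq B_{\eps/2}(\mathcal A)$, not $\mathcal A\ssq B_{\eps/2}(A_n)$). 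For comparison, the paper stays on the torus and uses disjointness in the opposite direction: an accumulation point $x_0$ of the bad points lies in \emph{none} of the disks $D_k$, and the contradiction is that the nearest point of $\cl[D_k]$ to $x_0$ would have to lie in $B_\eps(x_0)$, hence off $\partial D_k$, whereas the nearest point of the closure of an open set, seen from outside, must lie on its boundary.
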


\begin{proof}
  For any fixed $\eps>0$, the fact that $A_n\rightarrow_\mathcal{H}
  \mathcal{A}$ implies that, for any fixed $\eps>0$, $\mathcal{A}\ssq
  B_\eps(A_n)\ssq B_\eps(\filll(A_n))$ for sufficiently large $n$.
  Therefore, it suffices to show that conversely $\filll(A_n)\ssq
  B_\eps(\mathcal{A})$ for sufficiently large $n$.

  Suppose for a contradiction that for some $\varepsilon>0$ there is a
  sequence of integers $n_k\nearrow\infty$ such that for each $k\in\N$
  there exists some $x_k\in\filll(A_{n_k})\setminus
  B_{2\eps}(\mathcal{A})$. Since
  $A_n\rightarrow_{\mathcal{H}}\mathcal{A}$, for $k$ large enough, we
  have that $A_{n_k} \subset B_{\eps} (\mathcal{A})$.  By
  Lemma~\ref{fill2}, the point $x_k$ is contained in some disk $D_k$
  with $\partial D_k \ssq A_{n_k}$, and for large $k$ we have
  $\partial D_k \ssq B_\eps(\mathcal{A})$. Let $x_0$ be an
  accumulation point of $\{x_k\}_{k\in\N}$. Then $x_0 \notin
  B_{\eps}(A)$, and further $x_0$ cannot belong to any disk $D_k$
  since these are pairwise disjoint, which follows from the assumption
  that the sets $\filll(A_n)$ are pairwise disjoint combined with
  Lemma~\ref{fill2}. Now, let $k\in \N$ such that $D_k\cap
  B_\eps(x_0)\neq \emptyset$ and $\partial D_k\subset
  B_{\eps}(\mathcal{A})\ssq B_\eps(x_0)^c$. Then the closest point of
  $\cl[D_k]$ to $x_0$ is contained in $B_\eps(x_0)$ and can therefore
  not belong to the boundary of $D_k$, a contradiction.
\end{proof}

We now finish the proof

\begin{proof}[Proof of Addendum~\ref{Cantorextension}] Let
  $\{\Lambda_i\}_{i\in I}:=\conn(\mathcal{M})$. Since $\mathcal{M}$ is
  of type 3, every element in $\{\Lambda_i\}_{i\in I}$ is bounded.
  Now, $\mathcal{U}=\{\filll(\Lambda_i)\}_{i\in I}\cup \{
  \{x\}:x\notin \bigcup_{i\in I}\filll(\Lambda_i) \}$ is a family of
  bounded continua in $\T^2$.  We claim that it is an upper
  semi-continuous decomposition of $\T^2$.

  First let us see that $\mathcal{U}$ is a partition. Suppose for a
  contradiction that $\filll(\Lambda_i)\cap \filll(\Lambda_j)\neq
  \emptyset$ for some $i\neq j\in I$. Then since $\Lambda_i\cap
  \Lambda_j=\emptyset$, $\Lambda_i$ has to be contained in a bounded
  connected component $D$ of $\Lambda_j^c$ or vice versa.  By
  Proposition~\ref{p.classification}, since the compact boundary of
  the unique doubly essential component of $\mathcal{M}^c$ is left
  invariant and it is contained in the minimal set $\mathcal{M}$, by
  minimality, $\mathcal{M}$ equals the boundary of the unique doubly
  essential component. If $\Lambda_i \subset D$, with $D$ a bounded
  complementary domain of $\Lambda_j^c$, then an open neighborhood of
  a point of $\Lambda_i$ does not intersect the doubly essential
  component, even though $\Lambda_j \subset \mathcal{M}$, a
  contradiction.

  To proceed, the elements of $\mathcal{U}$ are non-separating. Hence,
  it remains to check the upper semi-continuity.  For this the only
  non trivial case is when the sequence of elements in $\mathcal{U}$
  is given by elements in $\{\textrm{Fill}(\Lambda_i)\}_{i\in I}$.
  Take a countable subsequence
  $\{\textrm{Fill}(\Lambda_n)\}_{n\in\N}\subset
  \{\textrm{Fill}(\Lambda_i)\}_{i\in I}$ such that
  $\textrm{Fill}(\Lambda_n)\rightarrow_{\mathcal{H}}\mathcal{X}$.
  Passing to a subsequence if necessary, we may assume that the
  $\Lambda_n$ converge to a continuum $\mathcal{Y}\ssq\mathcal{M}$. We
  therefore have $\mathcal{Y}\ssq\Lambda_i$ for some $i\in I$, and by
  Lemma~\ref{fill3} we have that
\begin{equation}
\filll(\Lambda_n)\rightarrow_\mathcal{H}\mathcal{Y} \ssq \Lambda_i\ssq \filll(\Lambda_i).
\end{equation}
Hence, $\mathcal{U}$ is upper semi-continuous.

Now, let $\Phi \colon \T^2\rightarrow \T^2$ be a Moore projection for
$\mathcal{U}$. Given a point $x\in\T^2$, since $\mathcal{U}$ is
preserved by $f$ (i.e for every element $U_i \in\mathcal{U}$ we have
$f(U_i)\in\mathcal{U}$) the set $\Phi\circ f \circ \Phi^{-1}(x)$
contains only one point $y_x\in\T^2$ for every $x\in \T^2$. Define
$\widetilde{f}:\T^2\rightarrow\T^2$ by $f(x)=y_x$.  We claim that
$\widetilde{f}$ is a homeomorphism. Let us prove first the continuity.
For this we take $x\in \T^2$ and fix $\varepsilon>0$. Then, there
exists a neighbourhood $V$ of $\Phi^{-1}(y_x)$ such that
$\Phi(V)\subset B(y_x,\varepsilon)$. Moreover, there exist a
neighbourhood $U$ of $f^{-1}(\Phi^{-1}(y_x))$ such that $f(U)\subset
V$.  On the other hand, since $\Phi^{-1}(x)=f^{-1}(\Phi^{-1}(y_x))$ we
have that there exist $\delta>0$ such that
$\Phi^{-1}(B(x,\delta))\subset U$.  Therefore, given a point $z\in
B(x,\delta)$ the set $\Phi\circ f \circ \Phi^{-1}(z)$ is contained in
$B(y_x,\varepsilon)$. Hence, $\widetilde{f}$ is continuous.  If we
define $\widetilde{f}^{-1}:\T^2\rightarrow\T^2$ such that
$\widetilde{f}^{-1}(x)$ is the unique point in $\Phi\circ f^{-1} \circ
\Phi^{-1}(x)$, we have that $\widetilde{f}^{-1}$ is exactly the
inverse function of $\widetilde{f}$. Moreover, by an analogous
argument as above we have that $\widetilde{f}^{-1}$ is continuous.
Therefore $\widetilde{f}$ is a homeomorphism.

By definition of $\widetilde{f}$, $\Phi \circ f(x)=\widetilde{f} \circ
\Phi(x)$ holds for every $x\in\T^2$. This implies in particular that
$\widetilde{\mathcal{M}}=\Phi(\mathcal{M})$ is minimal for $\widetilde
f$. Thus, we have that $(f,\mathcal{M})$ is an extension of
$(\widetilde{f},\widetilde{\mathcal{M}})$. Moreover, Proposition
\ref{disc} implies that $\Phi(\mathcal{M})$ is totally disconnected
and therefore either a periodic orbit or a Cantor set.
\end{proof}

\section{Special Cases and Applications}\label{applications}

In this section, we consider several special cases of the
classification and provide a number of relations of the possible
minimal sets with other dynamical properties, such as the rotation set
and orbit behaviour.

\subsection{Homeomorphisms homotopic to an Anosov} \label{Anosov}

To prove Corollary~\ref{l.anosov}, recall classical results on
Anosov diffeomorphisms. 

(1) Manning~\cite{manning:1974} showed that any Anosov diffeomorphism
is topologically conjugate to an algebraic Anosov, i.e. an Anosov
induced by a hyperbolic element of $\textrm{SL}(2, \R)$.

(2) Bowen~\cite{Bowen} showed that a minimal set of an algebraic
Anosov diffeomorphism is either a periodic orbit or a Cantor set.

(3) Walters~\cite{walter} provides the existence of a semiconjugacy, homotopic to the identity, between a homeomorphism
homotopic to an Anosov and the underlying Anosov.

\begin{proof}[Proof of Corollary~\ref{l.anosov}]
  If $\mathcal{M}$ would be of type 1 or 2, then by
  Lemma~\ref{l.compact_bounded}, there exists at least one unbounded
  connected component of $\mathcal{M}$, which we denote $\Lambda$. Let
  $h:\T^2\rightarrow\T^2$ be the semiconjugacy between $f$ and $f_A$,
  given by~\cite{walter}. Then $\mathcal{M}':=h(\mathcal{M})$ is a
  minimal set of $f_A$ and is totally disconnected due
  to~\cite{Bowen,manning:1974}. On the other hand, since $h$ is
  continuous and $\Lambda$ is unbounded, $h(\Lambda)$ must be an
  unbounded continuum in $\mathcal{M}'$, a contradiction. Since
  $\mathcal{M}$ can not be the whole torus either, $\mathcal{M}$ has
  to be of type 3.
\end{proof}

\subsection{Non-wandering torus homeomorphisms}

We start with three statements on periodic circloids of non-wandering torus
homeomorphisms.
\begin{lema}[\cite{jaeger:2009b}, Corollary 3.6]
  \label{l.circloid-emptyint} Suppose $C$ is a periodic circloid of a
  non-wandering torus homeomorphism. If $C$ does not contain periodic
  points, then $C$ has empty interior.
\end{lema}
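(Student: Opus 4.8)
The plan is to argue by contradiction: assuming $\inte(C)\neq\emptyset$, I will produce a periodic point of $f$ inside $C$. First I would reduce to a convenient normal form. Since a periodic point of a power $f^k$ lying in $C$ is also a periodic point of $f$ in $C$, and since replacing $f$ by a power affects neither the absence of periodic points in $C$ nor the non-wandering property (a power of a non-wandering homeomorphism of a compact surface is again non-wandering), I may assume that $f(C)=C$ and that $f$ is orientation-preserving. Throughout, $\mathcal{A}$ denotes an embedded open annulus containing $C$ as in the definition of a circloid and $\U^{+},\U^{-}$ the two complementary components of $C$ in $\mathcal{A}$; I treat the case of an essential circloid, the homotopically trivial case being analogous after passing to a Jordan disk in $\torus$ containing $C$.

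The topological heart of the argument is to show that every connected component $W$ of the open, $f$-invariant set $U:=\inte(C)$ is a topological disk. I would establish this in two steps. (a) The set $U$ contains no simple closed curve that is essential in $\torus$: such a curve $\gamma$ is isotopic to the core of $\mathcal{A}$, so its complement in $\mathcal{A}$ consists of exactly two pieces, unbounded above and below respectively; thus $\gamma$ is an annular continuum with $\gamma\subsetneq C$ (the inclusion being strict because a set with non-empty interior is not a curve), contradicting the minimality of the circloid $C$. (b) Since the complement of $C$ in $\mathcal{A}$ has no bounded component, any inessential simple closed curve $\delta\subseteq C$ bounds a Jordan disk in $\mathcal{A}$ entirely contained in $C$, because the unbounded sets $\U^{\pm}$ cannot lie inside a bounded disk whose boundary avoids them. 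Applying this to a simple closed curve $\delta\subseteq W$ — which is inessential by (a) — the open Jordan disk it bounds consists of interior points of $C$ and meets $W$, hence lies in the component $W$ of $U$. Therefore $W$ is simply connected, and by Lemma~\ref{Fe} (equivalently Proposition~\ref{p.fill-properties}(b)--(c)) it is a topological disk, homeomorphic to $\R^2$.

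With this in hand the dynamical conclusion is short. Since $U$ is $f$-invariant, $f$ permutes its components, and since $f$ is non-wandering and $W$ is open, a small ball $V\subseteq W$ satisfies $f^{m}(V)\cap V\neq\emptyset$ for some $m\geq 1$, which forces $f^{m}(W)=W$. Passing once more to the power $f^{2m}$, I obtain an orientation-preserving non-wandering homeomorphism leaving the open disk $W\cong\R^2$ invariant; by Brouwer's plane translation theorem a fixed-point free orientation-preserving homeomorphism of the plane has no non-wandering point, so $f^{2m}|_{W}$ has a fixed point $z\in W\subseteq C$, and $z$ is a periodic point of the original $f$ contained in $C$, a contradiction; hence $\inte(C)=\emptyset$. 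The step I expect to be the real obstacle is (b) together with the passage ``$W$ is a disk'': it is exactly here that the minimality built into the notion of circloid, and the fact that $C$ is already ``filled'' in its ambient annulus, enter in an essential way, whereas the remaining dynamical input is only classical planar fixed-point theory.
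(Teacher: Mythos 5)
The paper does not actually prove this lemma: it is imported verbatim from \cite{jaeger:2009b} (Corollary 3.6), so there is no internal argument to compare yours against. Your proposal reads as a correct, essentially self-contained proof, and it is the natural argument behind the cited result. The topological step is sound: a simple closed curve in $\inte(C)$ that is essential in the ambient annulus would itself satisfy properties (i) and (ii) of the circloid definition and be strictly smaller than $C$, contradicting minimality, while a trivial simple closed curve in $C$ bounds a Jordan disk which cannot contain the unbounded sets $\U^{\pm}(C)$ and hence lies in $C$; together with Proposition~\ref{p.fill-properties} (or Lemma~\ref{Fe}) this makes every component $W$ of $\inte(C)$ an open topological disk. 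The dynamical step (periodicity of $W$ from the non-wandering hypothesis, then a fixed point of a power restricted to $W$ from Brouwer's plane translation theorem) is also fine. Three points that you assert should be made explicit. First, the claim that a power of a non-wandering homeomorphism is again non-wandering is true but not a triviality: it follows from the Baire argument showing that $\Omega(f)=\torus$ forces the recurrent points to be dense, combined with the classical fact that recurrence is inherited by powers. Second, to invoke Brouwer you need $f^{2m}|_W$ to have a non-wandering point; this is immediate, but only because $W$ is open and $f^{2m}$-invariant, so that returns of a small ball $V\ssq W$ necessarily take place inside $W$. Third, in the homotopically trivial case the dichotomy should be run with respect to curves essential in the ambient annulus $\mathcal{A}$ rather than in $\torus$ (an $\mathcal{A}$-essential curve is again a proper annular subcontinuum of $C$, and an $\mathcal{A}$-trivial one bounds a disk inside $C$), after which the argument is literally the same. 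With these clarifications your proof is complete and can serve as a substitute for the external reference.
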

We call a straight line $L\ssq\R^2$ {\em rational}, if it contains infinitely
many rational points. Note that in particular, this implies that the slope of
$L$ is rational.
\begin{lema}[\cite{jaeger:2009b}, Proposition 3.9]
  \label{l.circloid-rational} Suppose $f\in\homtwo$ has a periodic
  circloid. Then the rotation set of $f$ is contained in a rational line.
\end{lema}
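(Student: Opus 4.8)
The plan is to reduce the statement to the case of a \emph{periodic essential annulus}, which is exactly the content of the last assertion of Lemma~\ref{l.degenerate-rotinterval}. Throughout, $C$ denotes the periodic circloid, which we take to be essential (the relevant case; cf.\ \cite{jaeger:2009b}), of homotopy type $(p,q)$ with $\textrm{gcd}(p,q)=1$, and we fix $k\in\N$ with $f^k(C)=C$.

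The first step is purely topological: the complement $U:=\torus\smin C$ is a \emph{domain} of homotopy type exactly $(p,q)$. This is a standard property of essential circloids. Concretely, in the annular cover $\pi_\A\colon\A\to\torus$ associated with the subgroup of $\pi_1(\torus)$ generated by $(p,q)$, the circloid $C$ lifts to a compact circloid $C_0\ssq\A$; its integer vertical translates $C_0+(0,j)$, $j\in\Z$, are pairwise disjoint compact circloids, and the region $B_0$ lying between $C_0$ and $C_0+(0,1)$ is connected and projects homeomorphically onto $U$. In particular $U$ is a domain; it contains the projection of a horizontal loop in $B_0$, which is $(p,q)$-essential; and since $B_0$ is contained in a vertically bounded region of $\A$, every loop in $U$ has homotopy type a multiple of $(p,q)$, so $U$ is not doubly essential. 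Hence $U$ is $(p,q)$-essential.

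The dynamical step is then immediate. Since $f^k(C)=C$ we have $f^k(U)=U$, so by Proposition~\ref{p.fill-properties}(g) the set $\filll(U)$ is $f^k$-invariant, and by Proposition~\ref{p.fill-properties}(d) it is a $(p,q)$-annulus. Thus $f$ possesses a periodic essential annulus, and the last assertion of Lemma~\ref{l.degenerate-rotinterval} gives that the rotation set $\rho(F)$ is contained in a rational line, as claimed.

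The main obstacle is the topological input that the complement of an essential circloid is connected, that is, that the band $B_0$ between two consecutive circloid lifts in the annular cover is connected. This is precisely the point at which the minimality clause in the definition of a circloid is used: it guarantees that $C$ cuts its ambient annulus into exactly two pieces with no bounded complementary components, so that no additional components appear in the complement after passing back down to $\torus$. Once this is granted, the remainder is the invariance bookkeeping above together with the already-established periodic-annulus case, so no further work beyond Lemma~\ref{l.degenerate-rotinterval} and Proposition~\ref{p.fill-properties} is needed.
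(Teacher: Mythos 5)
Your overall strategy is sound: once you know that the complement of a periodic essential circloid contains an invariant (hence periodic) essential annulus of the same homotopy type, Proposition~\ref{p.fill-properties}(d),(g) and the last assertion of Lemma~\ref{l.degenerate-rotinterval} finish the argument. (For the record, the paper does not prove this lemma at all --- it is quoted from \cite{jaeger:2009b} --- so there is no ``paper route'' to compare with; your reduction to Lemma~\ref{l.degenerate-rotinterval} is a reasonable way to make it self-contained. Your restriction to essential circloids is also the right reading of the statement: for a homotopically trivial periodic circloid the conclusion can genuinely fail, e.g.\ an invariant trivial loop bounding an ``island'' of a map whose rotation set has interior, and the lemma is only ever applied to essential circloids in Corollary~\ref{t.ipr}.)

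The genuine gap is that the entire topological content of the lemma --- everything beyond Lemma~\ref{l.degenerate-rotinterval} --- is asserted rather than proved, and the one justification you do offer is misplaced. You claim that $C$ lifts to a ``circloid $C_0$ in $\A$'', that the band $B_0$ between $C_0$ and $C_0+(0,1)$ is connected, and that it projects homeomorphically onto $U=\torus\smin C$, calling this ``a standard property''. None of this is immediate: that a \emph{single} lift $C_0$ separates the two ends of the annular cover (and not merely the full preimage $\bigcup_j(C_0+(0,j))$), that consecutive lifts are vertically ordered, and that the region between them is connected and injects under the covering projection are precisely the statements that need proof; they are true, but require an argument (for instance, end-separation of $C_0$ follows because $H^1$ of the cover restricts nontrivially to $C_0$ through the essential annulus $\mathcal{A}$, and connectedness of $B_0$ needs a Janiszewski-/Alexander-duality-type argument). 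Moreover, your closing remark attributes the key input to the \emph{minimality} clause of the definition of a circloid; that is not where it comes from. The property you invoke --- complement in $\mathcal{A}$ consisting of exactly two unbounded components and no bounded ones --- is condition (ii) in the definition of an \emph{annular continuum}, available without minimality, and indeed the lemma holds verbatim for any periodic essential annular continuum. Finally, note that the lift picture can be bypassed entirely, which would shorten the missing work: choosing disjoint essential loops $\gamma^\pm\ssq\U^\pm(C)$, the closure of the component of $\torus\smin(\gamma^+\cup\gamma^-)$ not containing $C$ connects $\U^+(C)$ to $\U^-(C)$, which gives connectedness of $U$; and any loop $\beta\ssq\torus\smin C$ meets $\mathcal{A}$ in proper arcs each contained in $\U^+(C)$ or $\U^-(C)$, hence entering and leaving through the same end, so the algebraic intersection number of $\beta$ with $\gamma^+$ vanishes and $U$ is not doubly essential. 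With either completion your reduction goes through; as written, the decisive step is a claim, not a proof.
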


Given $f\in\homeo(\torus)$, we say an $f$-invariant continuum
$C\ssq\torus$ is aperiodic if it does not contain a periodic point.
In~\cite{koropecki2009aperiodic}, Koropecki identified annular
continua as the only possible aperiodic invariant proper subcontinua of
non-wandering torus homeomorphisms.
\begin{teor}[\cite{koropecki2009aperiodic}, Theorem 1.1]
  \label{t.koro} Let $S$ be a compact orientable surface and suppose
  $f\in\homeo(S)$ is non-wandering. Then every aperiodic invariant
  proper subcontinuum of $S$ is an annular continuum.
\end{teor}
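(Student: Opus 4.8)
Since this statement is quoted from \cite{koropecki2009aperiodic}, I only outline the structure of the argument. Let $C\ssq S$ be an aperiodic $f$-invariant proper subcontinuum and suppose, for a contradiction, that $C$ is not an annular continuum; replacing $f$ by $f^2$ if necessary, we may assume $f$ preserves orientation. The first point is that \textbf{every connected component of $S\smin C$ is periodic}: $f$ permutes the components of $S\smin C$, and any two distinct iterates $f^i(U),f^j(U)$ of a fixed component are disjoint, so a component with infinite $f$-orbit would be a wandering open set, contradicting non-wanderingness. Write $p(U)$ for the period of $U\in\conn(S\smin C)$. Either $C$ is \emph{inessential}, i.e. every connected component of its preimage in the universal cover $\widetilde S$ is bounded, or it is \emph{essential}. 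In the inessential case, let $\widetilde C$ be such a bounded component, choose the lift $F$ of $f$ with $F(\widetilde C)=\widetilde C$, and apply the Cartwright--Littlewood--Bell fixed point theorem to obtain a fixed point of the orientation-preserving planar homeomorphism $F$ inside the invariant continuum $\widetilde C$ (when $S=S^2$ one argues in the plane obtained by deleting a point from a complementary domain). Projecting down gives a fixed point of $f$ in $C$, contradicting aperiodicity; note that this case does not use non-wanderingness.

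So from now on $C$ is essential. For each complementary domain $U$, periodic of period $p=p(U)$, the return map $f^p$ induces on the prime-ends circle associated with each end of $U$ a homeomorphism with a well-defined rotation number $\rho_U$. The key claim is that every such $\rho_U$ is irrational: if $\rho_U$ were rational, a suitable power of $f^p$ would fix a prime end, and since $f$ is non-wandering (hence $f^p$ has no wandering open sets near $\partial U$), a Cartwright--Littlewood--type argument produces a periodic point in the impression of that prime end, which lies in $\partial U\ssq C$ --- contradicting the aperiodicity of $C$.

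It remains to deduce that an essential invariant continuum $C$ all of whose complementary domains carry irrational prime-ends rotation numbers must be annular; this is the genuinely hard part. After standard reductions --- replace $S$ by an $f^q$-invariant essential subsurface that $C$ fills, and pass to its universal cover, reducing to the case where $S$ is an open annulus or the torus --- one analyses the complementary domains of $C$. If $C$ is not annular, its complement must contain either two essential domains of incompatible homotopy types, or an essential domain with more than one essential end, and in either situation the circloid and rotation-interval machinery of Section~\ref{Classification} (Lemmas~\ref{l.frontiers}, \ref{l.boundary-frontier}, \ref{l.degenerate-rotinterval}, together with the construction of Hausdorff-limit invariant circloids in Lemma~\ref{l.essential-limit}) forces one of the relevant rotation numbers to be rational, indeed $0$. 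Combined with the preceding paragraph this yields a periodic prime end and hence a periodic point in $C$, the desired contradiction. Making the informal phrase ``$C$ fills a non-annular subsurface'' precise, and extracting the periodic prime end from it, is the delicate step, and for the details I would follow \cite{koropecki2009aperiodic}.
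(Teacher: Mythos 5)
The paper does not prove this statement at all: Theorem~\ref{t.koro} is imported verbatim from \cite{koropecki2009aperiodic} (Theorem~1.1 there) and used as a black box, so your decision to defer the details to that reference is exactly what the paper itself does. In that sense the citation is the correct ``proof'' here, and nothing more is required.

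However, the sketch you wrap around the citation should not be mistaken for a faithful outline, and taken on its own it has genuine gaps. First, in the inessential case the Cartwright--Littlewood--Bell theorem yields a fixed point of the lift only in the \emph{filled} continuum $\filll_{\R^2}(\widetilde C)$, not in $\widetilde C$ itself; that fixed point may lie in a bounded complementary disk, so aperiodicity of $C$ is not directly contradicted, and your claim that this case needs no non-wandering hypothesis is unjustified as stated. Second, your ``key claim'' --- that a rational prime-end rotation number of a periodic complementary domain of a non-wandering homeomorphism forces a periodic point in the impression, hence in $\partial U\ssq C$ --- is not a routine ``Cartwright--Littlewood-type argument''; in this generality it is a substantial theorem in its own right (essentially the later Koropecki--Le~Calvez--Nassiri work on prime-end rotation numbers and periodic points), and it is not the route taken in \cite{koropecki2009aperiodic}. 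Third, the concluding reduction cannot be carried by the machinery of Section~\ref{Classification}: Lemmas~\ref{l.frontiers}, \ref{l.boundary-frontier}, \ref{l.degenerate-rotinterval} and \ref{l.essential-limit} are statements about the annulus and the torus (homotopy types $(p,q)$ in $\T^2$, rotation intervals of lifts of torus maps), whereas Theorem~\ref{t.koro} concerns an arbitrary compact orientable surface; moreover the asserted dichotomy (``two essential domains of incompatible homotopy types, or an essential domain with more than one essential end'') and the conclusion that some relevant rotation number must be $0$ are not argued, and ``passing to the universal cover'' of a higher-genus subsurface does not land you on an annulus or a torus. So: citing \cite{koropecki2009aperiodic} is fine and matches the paper; the surrounding outline, as written, is not a proof and in places misrepresents what would actually be needed.
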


The following consequence will be crucial in the proof of
Theorem~\ref{teoa.nonwandering} below.

\begin{lema}
  \label{c.koro}
  Suppose $f\in\homeo(\torus)$ is non-wandering. Then every periodic
  unbounded disk $D$ contains a periodic point in its boundary.
\end{lema}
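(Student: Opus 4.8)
The plan is to argue by contradiction: suppose $\partial D$ contains no periodic point of $f$ and derive a contradiction from Koropecki's theorem (Theorem~\ref{t.koro}). I would first make two harmless reductions. Replacing $D$ by $\filll(D)$, which by Proposition~\ref{p.fill-properties}(c),(g) is again a periodic unbounded disk and whose boundary is contained in $\partial D$ by part (f), I may assume $D=\filll(D)$; a periodic point of $f$ in $\partial\filll(D)$ is in particular one in $\partial D$. Replacing $f$ by $f^p$, which is again non-wandering, I may also assume $f(D)=D$, so that $\partial D$ is a genuine $f$-invariant set (and a periodic point of $f^p$ is a periodic point of $f$).

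The first substantial step is to show that $\partial D$ is a continuum. Pick a connected component $D_0$ of $\pi^{-1}(D)$; since $D$ is a disk, $D_0$ is a simply connected planar domain (Lemma~\ref{lem_simply_simply}), and the translates $D_0+v$, $v\in\Z^2\smin\{0\}$, are pairwise disjoint because $D$ is trivial. As $D_0$ is simply connected, its complement in the sphere $\overline{\R^2}$ is connected, so the boundary $\partial D_0$ taken in $\overline{\R^2}$ is a continuum; since $D$ is unbounded, $D_0$ is unbounded and this boundary contains the point $\infty$. A point-set argument, using the disjointness (indeed, the positive mutual distance in $\R^2$ of the lifts) of the translates $D_0+v$ together with the identity $\partial D=\pi(\partial D_0)$, then shows that $\pi(\partial D_0)=\partial D$ is connected in $\T^2$. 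Thus $\partial D$ is a subcontinuum of $\T^2$, and it is proper since $D\neq\emptyset$ is open. I expect this point-set step — in particular the passage from the plane to the torus, which is what fails for general connected subsets of $\T^2$ — to be the main technical obstacle.

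Granting this, the conclusion is quick. The continuum $\partial D$ is $f$-invariant and, by assumption, contains no periodic point, so Theorem~\ref{t.koro} forces $\partial D$ to be an annular continuum: there is an embedded open essential annulus $\mathcal A\ssq\T^2$ with $\partial D\ssq\mathcal A$ and $\mathcal A\smin\partial D=\U^{+}\uplus\U^{-}$, the two pieces being unbounded above, respectively below, in $\mathcal A\cong\kreis\times\R$. Since $\partial D$ is compact, it lies in a bounded part of $\mathcal A$, so each of $\U^{+},\U^{-}$ contains an essential loop of $\mathcal A$, hence a loop that is homotopically non-trivial in $\T^2$. On the other hand $D$ is open, connected and disjoint from $\partial D$, hence is a connected component of $(\partial D)^c$; and $D$ cannot be disjoint from $\mathcal A$, since otherwise $\partial D\ssq\overline D\ssq\T^2\smin\mathcal A$ would force $\partial D=\emptyset$. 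Therefore $D$ meets $\mathcal A\smin\partial D$, so $D$ contains all of $\U^{+}$ or all of $\U^{-}$, and consequently the disk $D$ contains a loop that is homotopically non-trivial in $\T^2$ — a contradiction. Hence $\partial D$ does contain a periodic point, and undoing the two reductions completes the proof.
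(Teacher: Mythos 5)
Your proposal follows the same route as the paper's proof: pass to $f^p$, regard $\partial D$ as an invariant continuum, apply Theorem~\ref{t.koro} to conclude that $\partial D$ would be an annular continuum, and then contradict the presence of the unbounded disk $D$ in its complement; the preliminary reductions via $\filll$ and $f^p$ are harmless. The problem is that the step you yourself single out as the main technical obstacle --- the connectedness of $\partial D$ --- is never actually carried out, and the ingredients you indicate for it do not hold up. The translates $D_0+v$ are pairwise disjoint, but they need \emph{not} be at positive mutual distance in $\R^2$ (disjoint open lifts can share boundary points; compare the second case in the proof of Lemma~\ref{np}, where $\cl[\widetilde D_0]$ meets one of its translates). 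Likewise the identity $\partial D=\pi(\partial D_0)$ is not automatic when $D_0$ is unbounded: what is immediate is only $\pi(\partial D_0)\ssq\partial D\ssq\cl[\pi(\partial D_0)]$, since the projection of the closed unbounded set $\partial D_0$ need not be closed. And even granting both points, connectedness of $\partial D_0\cup\{\infty\}$ in $\overline{\R^2}$ does not formally give connectedness of $\partial D$: deleting $\infty$ may disconnect the boundary (think of a strip-like $D_0$ whose two edges are far apart in $\R^2$), and how the torus re-glues these pieces is precisely the content of the missing argument. The paper simply asserts that $\partial D$ is a continuum; the assertion is correct, but a proof requires a genuine argument --- for instance: if $\partial D=F_1\cup F_2$ with $F_1,F_2$ compact, disjoint and non-empty, separate them by finitely many disjoint simple closed curves avoiding $\partial D$; every such curve meeting $D$ lies in $D$ and, $D$ being a disk, bounds a closed disk contained in $D$; removing these finitely many disks from $D$ leaves an open connected set which accumulates on both $F_1$ and $F_2$ while avoiding all the separating curves, a contradiction. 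As written, your proof is incomplete exactly at this point.

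There is also a (smaller, fixable) gap in the endgame. You claim that each of $\U^+,\U^-$ contains ``an essential loop of $\mathcal A$, hence a loop that is homotopically non-trivial in $\T^2$''. This is false when the annular continuum $\partial D$ is homotopically trivial, i.e.\ when the annulus $\mathcal A$ provided by the definition is inessential in $\T^2$; then essential loops of $\mathcal A$ are null-homotopic in the torus. The paper avoids the issue by using the dichotomy for the complement of an annular continuum in $\T^2$ --- an essential open annulus, or a punctured torus together with a bounded disk --- and observing that an unbounded disk can equal none of these components. Your argument is easily repaired in the trivial case (there $D$ would have to be the bounded complementary disk, contradicting unboundedness), but as stated it covers only the essential case. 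The remaining steps (that $D$ is a full connected component of $(\partial D)^c$, that $D$ must contain $\U^+$ or $\U^-$) are correct and in substance coincide with the paper's reasoning.
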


\begin{proof}
  Let $p$ be the period of $D$. As the boundary of an invariant open
  disk, $\partial D$ is an $f^p$-invariant continuum. Suppose for a
  contradiction that $\partial D$ does not contain a periodic point.
  Then it is an annular continuum $A$ by Theorem~\ref{t.koro}.
  However, the complement of an annular continuum in \torus\ is either
  an open annulus $\mathcal{A}$ or the union of a punctured torus
  $\mathcal{T}$ and a bounded disk $\mathcal{D}$. If any of these sets
  contains the unbounded disk $D$, then by connectedness $D$ must have
  further boundary points in the respective set, a contradiction.
\end{proof}

\begin{proof}[Proof of Theorem~\ref{teoa.nonwandering}]
  Let $f\in\homeo(\torus)$ non-wandering and $\M \neq \T^2$ a minimal set for $f$.
  First, assume $\M$ is of type 1. Then $\conn(\M^c)$ cannot contain
  bounded disks since these would have to be wandering by the
  Classification Theorem. Likewise, $\conn(M^c)$ cannot contain an
  unbounded disk, since by Lemma~\ref{c.koro} this unbounded disk has
  to contain periodic points in the boundary, and the boundary belongs
  to \M. Hence $\M=\torus$.  Secondly, suppose $\M$ is of type 2.
  Obviously, the essential annuli cannot be wandering, therefore $\M$
  is equal to the orbit of the boundary of a periodic essential
  circloid $C$. However, by Lemma~\ref{l.circloid-emptyint} the
  interior of $C$ is empty, and it thus follows that $\partial C=C$.

  Finally, suppose $\M$ is of type 3. If $\M$ is a Cantor extension,
  then for any connected component $\Lambda\in\conn(\M)$ the set
  $\filll(\Lambda)$ is a wandering set. Therefore
  $\inte(\filll(\Lambda))=\emptyset$, which means that $\Lambda$ is
  non-separating. If $\M$ is a periodic orbit extension, but not a
  periodic orbit, then every connected component of $\M$ is an
  aperiodic invariant continuum for some iterate of $f$. By Lemma~\ref{c.koro} it is an annular continuum, 
  and by minimality this annular continuum must coincide with its frontiers. By
  Lemma~\ref{l.frontiers} the frontiers are circloids.
\end{proof}

\subsection{Relations with the rotation set}\label{RotationSet}

We now give a proof of the relation of the structure of minimal sets
to the rotation set for homeomorphisms homotopic to the identity.  We
start with the proof of Corollary~\ref{t.non-empty-rotset}, which
states that if the rotation set of $f\in\homtwo$ has non-empty
interior, then any minimal set is of type 3.

\begin{proof}[Proof of Corollary~\ref{t.non-empty-rotset}] Let
$f\in\homtwo$ have lift $F:\R^2\to\R^2$ and assume that the rotation set
$\rho(F)$ has non-empty interior.  Suppose for a contradiction that
$\mathcal{M}$ is a minimal set of $f$ that is not of type 3, that is, there
exists no doubly essential component in its complement. By
Lemma~\ref{l.rational} the existence of an essential component in
$\mathcal{M}^c$ is excluded, so that all connected components are disks.
Now, \cite[Theorem A]{misiurewicz/ziemian:1991} states that for every vector
$\rho\in\inte(\rho(F))$ there exists a minimal set $\mathcal{M}_\rho$ with
rotation vector $\rho$, that is,
\begin{equation}
\nLim \frac{F^n(z)-z}{n}=\rho ~\textup{for~all}~ z\in\pi^{-1}(\mathcal{M}_\rho).
\end{equation}
Fix $\mathcal{M}_\rho$ for some totally irrational vector
$\rho\in\R^2$. Then, since all points in $\mathcal{M}_\rho$ are
recurrent, $\mathcal{M}_\rho$ has to be contained in the orbit of a
periodic disk $D\ssq \mathcal{M}^c$. This implies that there exists a
set $\mathcal{M}'_\rho\ssq D$ which is minimal for $f^p$, where $p$ is
the period of $D$.

Choose a connected component $D_0$ of $\pi^{-1}(D)$, and a lift
$F:\R^2\to\R^2$ of $f$ that leaves $D_0$ invariant. Fix $z\in
\mathcal{M}_\rho'$ with lift $z_0\in\R^2$ and $\delta>0$ such that
$B_\delta(z)\ssq D$. Then irrationality of $\rho$ together with the
recurrence of $z$ implies that there exists a sequence
$\{n_k\}_{k\in\N}$ of integers such that $\kLim f^{n_kp}(z)=z$,
whereas $F^{n_kp}(z_0)$ is unbounded. Consequently, for sufficiently
large $k\in\N$ we have that $F^{n_kp}(z_0)\ssq B_\delta(z_0)+v$ for
some $v\in\Z^2\smin\{0\}$. However, this means that $D_0$ contains
both $z_0$ and $z_0+v$, contradicting the fact that $D$ is
homotopically trivial in $\torus$.
\end{proof}

Finally, we turn to the proof of Corollary~\ref{t.ipr}, which states
that if $f$ is a non-wandering pseudo-rotation with rotation vector
$\rho$ and $\mathcal{M} \neq \T^2$ is a minimal set, then
  \begin{itemize}
  \item[(a)] if $\rho$ is totally irrational, then $\M$ is an extension of a Cantor set, and
  \item[(b)] if $\rho$ is rational, then $\M$ is either an extension
    of a Cantor set, or the periodic orbit of a point or a
    homotopically trivial circloid.
  \end{itemize}

  Given a lift $F:\R^2\to\R^2$ of $f\in\homtwo$, the function
  $\varphi(z)=F(z)-z$ is doubly periodic and can therefore be
  interpreted as a function on the torus.
  \begin{teor}[\cite{matsumoto:2011}, Theorem 2]\label{t.matsumoto} Suppose
  $f\in\homtwo$ is non-wandering and $A$ is an essential annular
  continuum. Further, suppose there exists an $f$-invariant
  probability measure $\mu$ with support in $A$ and $\rho=\int\varphi
  \ d\mu\in\Q^2$.  Then there exists a periodic point in $A$ with
  rotation vector $\rho$.
\end{teor}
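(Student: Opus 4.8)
The plan is to reduce the statement to a Poincar\'e--Birkhoff-type fixed point theorem for non-wandering homeomorphisms of the open annulus. \textbf{Normalisation.} After conjugating $f$ by a suitable element of $\textrm{SL}(2,\Z)$ we may assume that $A$ has homotopy type $(1,0)$. Writing $\rho=(a/N,b/N)$ with $a,b\in\Z$ and $N\in\N$, I would replace $f$ by $g=f^{N}$ and choose the lift $G(z)=F^{N}(z)-(a,b)$. Then $g$ is again non-wandering, $\mu$ is still $g$-invariant, and since $F^{N}-\mathrm{id}=\sum_{j=0}^{N-1}\varphi\circ f^{j}$ we get $\int(G-\mathrm{id})\,d\mu=N\rho-(a,b)=0$ by invariance of $\mu$. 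A fixed point $x\in A$ of $g$ with $G(\tilde x)=\tilde x$ is exactly a periodic point of $f$ in $A$ with rotation vector $\rho$, so it suffices to find such a fixed point in the case $\rho=0$.

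\textbf{Passage to the annular cover.} Let $q\colon\widehat{\A}\to\torus$ be the covering with $\widehat{\A}=\R^{2}/\langle(1,0)\rangle$ and deck group generated by the vertical translation $T$, and let $\widehat g$ be the lift of $g$ induced by $G$. As $A$ is $(1,0)$-essential and not doubly essential, it has a connected lift $\widehat A\subset\widehat{\A}$ that maps homeomorphically onto $A$ and is a compact essential annular continuum in $\widehat{\A}$. One has $\widehat g(\widehat A)=\widehat A+k$ for some $k\in\Z$; if $k\neq0$ the orbit of $\supp\mu$ would drift vertically by $k$ at every iterate, contradicting the vanishing of the vertical component of $\int(G-\mathrm{id})\,d\mu$, so $\widehat A$ is in fact $\widehat g$-invariant. (If $A$ is not assumed invariant, a preliminary reduction replacing it by an invariant essential annular continuum extracted from $\supp\mu$ via the frontier construction of Lemma~\ref{l.frontiers} brings one to this situation.) I would then lift $\mu$ to the $\widehat g$-invariant measure $\widehat\mu$ on $\widehat A$. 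Both coordinates of the displacement of $\widehat g$ are bounded on the compact set $\widehat A$ and integrate to $0$ against $\widehat\mu$, and the problem becomes: find a fixed point of $\widehat g$ lying in $\widehat A$.

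\textbf{Producing the fixed point.} I would decompose $\widehat\mu$ into ergodic components. If some component has nonzero vertical mean, then, the total vertical mean being $0$, there exist components with strictly positive and with strictly negative vertical rotation number; by Birkhoff's theorem $\supp\widehat\mu\subseteq\widehat A$ then contains an orbit drifting upward and an orbit drifting downward in $\widehat{\A}$. This is a Poincar\'e--Birkhoff configuration, and the version of Franks' generalisation of the Poincar\'e--Birkhoff theorem valid for non-wandering annulus homeomorphisms --- where the non-wandering property together with the compact invariant annular continuum $\widehat A$ plays the role of area-preservation --- yields a fixed point of $\widehat g$ trapped between the two orbits; the frontier-circloid machinery of Lemmas~\ref{l.frontiers} and~\ref{l.boundary-frontier} then places it in $\widehat A$. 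If instead every ergodic component has zero vertical mean, Atkinson's recurrence theorem for zero-mean cocycles shows that $\widehat\mu$-a.e.\ orbit returns near its starting height infinitely often, hence stays in a fixed compact part of $\widehat{\A}$ along a subsequence; combined with the vanishing of the horizontal mean, a Brouwer-translation argument again forces a fixed point of $\widehat g$ in $\widehat A$ with zero horizontal rotation. Pushing this fixed point down by $q$ and undoing the normalisation produces the required periodic point of $f$ in $A$ with rotation vector $\rho$.

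\textbf{Main obstacle.} The delicate step is the localisation: Brouwer-theoretic arguments by themselves produce a fixed point of the lift somewhere in the annulus, and pinning it down to $\widehat A$ --- and excluding that its rotation vector is some other rational vector on the same rational line (recall Lemma~\ref{l.rational}) --- is exactly where the dynamics must enter, through the non-wandering hypothesis and a careful prime-end/frontier analysis. A second subtle point is the degenerate case in which $\widehat\mu$ has vanishing rotation vector on every ergodic component, since there the Poincar\'e--Birkhoff configuration is unavailable and the fixed point has to be extracted from recurrence alone.
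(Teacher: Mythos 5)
First, a point of orientation: this statement is not proved in the paper at all --- it is quoted from Matsumoto (\cite{matsumoto:2011}, Theorem 2) and used as a black box in the proof of Corollary~\ref{t.ipr}(b). So your sketch has to stand on its own as a proof of Matsumoto's theorem, and it does not. A first, repairable, problem is that your case division is run on the wrong coordinate: $\widehat A$ is a compact $\widehat g$-invariant set carrying $\supp\widehat\mu$, so the vertical Birkhoff sums along orbits in $\widehat A$ are bounded and \emph{every} ergodic component automatically has vertical rotation number zero; the dichotomy you want concerns the angular ($(1,0)$-) rotation numbers read off through the lift $G$ to $\R^2$. A second problem is your appeal to Franks' generalisation of Poincar\'e--Birkhoff ``for non-wandering annulus homeomorphisms'': that theorem needs the non-wandering hypothesis on the open annulus where you apply it, and the non-wandering property of $f$ on $\torus$ does \emph{not} pass to the annular cover $\widehat{\A}$ (a lifted neighbourhood may only return to a nontrivial vertical translate of itself). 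What survives in the cover is only the recurrence of $\widehat\mu$-a.e.\ point, which gives returning disks but not the global hypothesis of Franks' theorem as you invoke it.

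The fatal gap, however, is the one you yourself flag as the ``main obstacle'' and then do not address: localisation of the periodic point \emph{inside} $A$. Any Brouwer/Franks/Atkinson-type argument of the kind you outline produces, at best, a fixed point of $G$ somewhere in the annulus, and nothing prevents it from lying in $\U^+(\widehat A)$ or $\U^-(\widehat A)$ rather than in $\widehat A$; indeed, without the localisation the statement reduces to a much weaker (and essentially known) realization result. Lemmas~\ref{l.frontiers} and~\ref{l.boundary-frontier} are purely topological constructions of circloids from essential continua; they carry no fixed-point information and cannot ``place'' a fixed point in $\widehat A$, so the sentence delegating the localisation to them is not an argument. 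The actual proof of this theorem works with the prime end compactifications of the two complementary domains $\U^\pm(A)$, compares the prime end rotation numbers with the rotation vector of $\mu$ --- a comparison which is exactly where the non-wandering hypothesis enters and which fails for general homeomorphisms --- and then extracts the periodic point in $A$ by an index/prime-end argument in the spirit of Barge--Gillette. None of that machinery, which is where the entire difficulty of the theorem lies, appears in your outline, so the proposal has a genuine gap.
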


\begin{proof}[Proof of Corollary~\ref{t.ipr}] Suppose $f\in\homtwo$ is
  a non-wandering pseudo-rotation with rotation vector $\rho$ and
  $\mathcal{M} \neq \T^2$ a minimal set for $f$.

(a) Let $\rho$ be totally irrational. We have to rule out cases
$1^\textrm{nw}$ and $3^\textrm{nw}$ in
Theorem~\ref{teoa.nonwandering}. First, by
Lemma~\ref{l.circloid-rational}, \M\ cannot be a union of periodic
essential circloids, since these force the rotation set to be included
in a rational line.  Similarly, \M\ cannot be a periodic orbit or a
periodic orbit extension, since this implies the existence of a
rational rotation vector.  Note here that the factor map in the
definition of a periodic orbit extension preserves rotation vectors.

(b) Let $\rho$ be rational. In this case, we have to show that only
cases $2^\textrm{nw}$ and $3^\textrm{nw}$ in
Theorem~\ref{teoa.nonwandering} can occur. However, as a rational
pseudo-rotation $f$ has at least one periodic orbit\cite{franks:1989}.
Thus $\mathcal{M} \neq \T^2$.  Further, due to
Theorem~\ref{t.matsumoto} any periodic essential circloid has to
contain a periodic point. This rules out case $1^\textrm{nw}$.
\end{proof}

\section{Remarks and Problems}\label{Examples}

The results in this paper give rise to a number of further problems to
be elaborated upon. A recurring theme in exploring the structure of
minimal sets is the existence (or not) of unbounded disks.

\begin{problem}[Unbounded disks]
Let $\mathcal{M}$ be a minimal set of a homeomorphism $f \in \homeo(\T^2)$ of type 1 or 2.
\begin{enumerate}
\item[\textup{(i)}] If $\mathcal{M}$ is a type 2 minimal set, is it
  possible to have unbounded disks in the complement of $\mathcal{M}$?
  Note that if if there exists some unbounded disks in
  $\conn(\mathcal{M}^c)$, then there have to be infinitely many, since
  all disks are wandering by Theorem~\ref{TEOA}.
\item[\textup{(ii)}] Do there exist rational pseudo-rotations with
  type 1 minimal sets?

\end{enumerate}
\end{problem}
For recent progress concerning the problem of boundedness of invariant
disks, see \cite{korotal}.

In Corollary~\ref{t.non-empty-rotset} and~\ref{t.ipr}, we considered
the relation between the rotation set of a homeomorphism $f \in
\homeo(\T^2)$ and the structure of the minimal set in specific cases.
In~\cite{F}, a classification was given for the non-resonant case,
i.e. where the rotation set is a single totally irrational vector.
Between the cases given, there is an important class of rotation sets
consisting of line segments.

\begin{problem}[Rotation set versus structure of minimal sets]
  Let $\mathcal{M}$ be a minimal set of a homeomorphism $f \in
  \homeo(\T^2)$. Suppose the rotation set $\rho(f)$ is a line segment
  of positive length. Relate the properties of this line segment with
  the structure of the minimal sets the homeomorphism admits.
\end{problem}

For homeomorphisms homotopic to the identity, all types
of minimal sets that our classification allows are realised,
see~\cite{F} for these constructions. In the case where the
homeomorphism is homotopic to an Anosov, the list of possible minimal
sets is rather restricted, cf. Corollary~\ref{l.anosov}. The case left
is the class of homeomorphisms homotopic to neither the identity, nor
to an Anosov, which in case of the torus are precisely the
Dehn-twists. In this case, examples of type 2 as well as type 3
minimal sets are well-known to occur as minimal sets. Concerning type
1, taking a minimal Dehn-twist and blowing an orbit up to bounded
disks, one obtains minimal sets for which the
complement is a union of bounded disks. This leaves open one case for
Dehn-twists.

\begin{problem}[Unbounded disks and Dehn-twists]
  Is it possible for a homeomorphism homotopic to a Dehn-twist to have
  a type 1 minimal set with either periodic or wandering {\em
    unbounded} disks?
\end{problem}


\begin{thebibliography}{pusus}

\bibitem{auslander}
J.~Auslander.
\newblock {\em Minimal flows and their extensions}.
\newblock Elsevier Science Ltd, 1988.

\bibitem{nakayama}
A.~Bis, H.~Nakayama, and P.~Walczak.
\newblock Locally connected exceptional minimal sets of surface homeomorphisms.
\newblock {\em Ann. de l'Institut Fourier}, 54(3):711--732, 2004.

\bibitem{Bowen} R. Bowen. \newblock Markov partitions and minimal sets for
  Axiom A diffeomorphisms. {\em Amer. J. Math.}  92:903-918, 1970.


\bibitem{franks:1970}
J.~Franks.
\newblock Anosov diffeomorphisms; global analysis.
\newblock {\em Proceedings of the Symposium on Pure Mathematics}, 14:61--93, 1970.

\bibitem{franks:1988}
J.~Franks.
\newblock Generalizations of the {P}oincar\'e-{B}irkhoff theorem.
\newblock {\em Ann. Math. (2)}, 128(1):139--151, 1988.

\bibitem{franks:1989}
J.~Franks.
\newblock Realizing rotation vectors for torus homeomorphisms.
\newblock {\em Trans. Am. Math. Soc.}, 311(1):107--115, 1989.

\bibitem{franks/lecalvez:2003}
J.~Franks and P.~le~Calvez.
\newblock Regions of instability for non-twist maps.
\newblock {\em Ergodic Theory Dyn. Syst.}, 23(1):111--141, 2003.

\bibitem{handel:1982}
M.~Handel.
\newblock A pathological area preserving ${C}^\infty$ diffeomorphism of the
  plane.
\newblock {\em Proc. Am. Math. Soc.}, 86(1):163--168, 1982.

\bibitem{herman:1986}
M.~Herman.
\newblock Construction of some curious diffeomorphisms of the {R}iemann sphere.
\newblock {\em J. Lond. Math. Soc.}, 34:375--384, 1986.

\bibitem{jaeger:2009b}
T.~J\"{a}ger.
\newblock Linearisation of conservative toral homeomorphisms.
\newblock {\em Invent. Math.}, 176(3):601--616, 2009.

\bibitem{katok/hasselblatt:1997}
A.~Katok and B.~Hasselblatt.
\newblock {\em Introduction to the Modern Theory of Dynamical Systems}.
\newblock Cambridge University Press, 1997.

\bibitem{koropecki:2007}
A.~Koropecki.
\newblock {\em On the dynamics of torus homeomorphisms}.
\newblock PhD thesis, IMPA (Brazil), 2007.

\bibitem{koropecki2009aperiodic}
A.~Koropecki.
\newblock Aperiodic invariant continua for surface homeomorphisms.
\newblock {\em Math. Zeitschrift}, 266(1):229--236, 2010.

\bibitem{korotal} A.~Koropecki and F.~Armando Tal. \newblock Strictly
  toral dynamics.  \newblock Preprint 2012, {\tt
    http://arxiv.org/abs/1201.1168}.

\bibitem{F} F.~Kwakkel. \newblock Minimal sets of non-resonant torus homeomorphisms.
\newblock
{\em Fund. Math.}, 211:41-76, 2011.

\bibitem{manning:1974}
A. ~Manning.
\newblock There are no new Anosov diffeomorphisms on tori.
\newblock {\em Amer. J. Math. }, 96(3): 422--429, 1974.

\bibitem{matsumoto:2011}
S.~Matsumoto.
\newblock Rotation sets of invariant separating continua of annular
  homeomorphisms.
\newblock Preprint 2011, {\tt http://arxiv.org/abs/1011.3176v2}. 

\bibitem{MaNa}
S.~Matsumoto and H.~Nakayama.
\newblock Continua as minimal sets of homeomorphisms of $S^2$.
2011, \newblock {\tt http://arxiv.org/abs/1005.0360v2}.

\bibitem{misiurewicz/ziemian:1989}
M.~Misiurewicz and K.~Ziemian.
\newblock Rotation sets for maps of tori.
\newblock {\em J. Lond. Math. Soc.}, 40:490--506, 1989.

\bibitem{misiurewicz/ziemian:1991}
M.~Misiurewicz and K.~Ziemian.
\newblock Rotation sets and ergodic measures for torus homeomorphisms.
\newblock {\em Fund. Math.}, 137(1):45--52, 1991.

\bibitem{shen/yi:1998}
W.~Shen and Y.~Yi.
\newblock Almost automorphic and almost periodic dynamics in skew product
  semiflows.
\newblock {\em Mem.\ Am.\ Math.\ Soc.}, 136(647), 1998.

\bibitem{veech}
W.A. Veech.
\newblock Almost automorphic functions on groups.
\newblock {\em Amer. J. Math.}, 87(3):719--751, 1965.

\bibitem{walter} P. Walters. \newblock Anosov diffeomorphisms are
  topologically stable. \newblock {\em Topology} 9:71--78, 1970.

\bibitem{whyburn}
G.~Whyburn.
\newblock {\em Analytic topology.}
\newblock AMS Colloquium Publications 28, 1942.




\end{thebibliography}
\end{document}